\documentclass[12pt,twoside]{amsart}
\usepackage{amssymb,amsmath,amsfonts,amsthm}
\usepackage{mathrsfs}
\usepackage{tgtermes}
\usepackage{mathtools}
\usepackage[X2,T1]{fontenc}
\usepackage[applemac]{inputenc}
\usepackage{cite}
\usepackage{latexsym}
\usepackage[dvips]{graphicx}
\usepackage{comment}
\usepackage{ulem}
\usepackage{enumitem}
\def\R{\mathbb R}

\def\N{\mathbb N}

\def\eps{\varepsilon}
\usepackage{colortbl}

\newcommand{\beqsn}{\arraycolsep1.5pt\begin{eqnarray*}}
	\newcommand{\eeqsn}{\end{eqnarray*}\arraycolsep5pt}
\newcommand{\beqs}{\arraycolsep1.5pt\begin{eqnarray}}
	\newcommand{\eeqs}{\end{eqnarray}\arraycolsep5pt}

\newcommand{\abs}[1]{\lvert#1\rvert}

\newtheorem{Th}{Theorem}[section]
\newtheorem{Rem}[Th]{Remark}
\newtheorem{Ex}[Th]{Example}
\newtheorem{Lemma}[Th]{Lemma}
\newtheorem{Def}[Th]{Definition}
\newtheorem{Prop}[Th]{Proposition}

\newtheorem{hp}{}

\catcode`\@=11

\renewcommand{\section}%
{\setcounter{equation}{0}\@startsection {section}{1}{\z@}{-3.5ex plus -1ex
		minus -.2ex}{2.3ex plus .2ex}{\Large\bf}}

\title{Schr\"odinger ultrahyperbolic equations with singular coefficients}
\author[Garetto]{Claudia Garetto}
\address{Claudia Garetto\\
	School of Mathematical Sciences\\Queen Mary University of London\\
Mile End Road, London E1 4NS\\ UK}
\email{c.garetto@qmul.ac.uk}

\author[Tramontana]{Davide Tramontana}
\address{Davide Tramontana\\
	Dipartimento di Matematica 
    Alma Mater Studiorum\\Universit\`a di Bologna\\
	 Piazza di Porta S. Donato 5\\
    40126 Bologna\\
	Italy}
\email{davide.tramontana4@unibo.it}

\date{}

\thanks{The authors were supported by the EPSRC grant EP/V005529/2. The second author is member of the Research Group GNAMPA of INdAM}
\thanks{On behalf of all authors, the corresponding author states that there is no conflict of interest. This manuscript has no associated data.}
\subjclass[2020]{Primary 35J10; Secondary 35B65, 35D99;}
\keywords{Schr\"odinger operator, Ultrahyperbolic operator, singular coefficients, regularisation}
\begin{document}

\begin{abstract}
In this paper we investigate the Cauchy problem for Schr\"odinger ultrahyperbolic equations with singular (less than continuous) coefficients. We prove $H^\infty$ well-posedness in the very weak sense under suitable assumptions of the distributional structure of the coefficients and decay on the lower order terms. Consistency is proven with the classical $H^\infty$-results when the equation coefficients are smooth.
\end{abstract}
\maketitle

\section{Introduction}
Schr\"odinger type equations play a central role in modelling wave phenomena across a wide range of physical contexts, including optics, quantum mechanics, and complex fluid dynamics. 
This paper is devoted to a specific class of these equations, known as \textit{ultrahyperbolic Schr\"odinger} equations, which are characterised by direction-dependent dispersion. This anisotropic dispersive structure makes their mathematical analysis especially subtle and demanding. Their analysis provides insight into how waves and signals propagate, interact, and evolve over time, particularly in media where dispersive effects and material properties exhibit intricate behaviour. Such equations naturally emerge in the study of nonlinear wave phenomena, including multidimensional water-wave models (see \cite{DS,SZ}) and certain completely integrable systems (see \cite{AH}). In detail, we will study the Cauchy problem  
\begin{equation}
\begin{cases}\label{mainprobIntro}
    Pu=f \quad \text{in} \ (0,T] \times \R^n, \\
    u(0,\cdot)=u_0 \quad \text{in} \ \R^n,
\end{cases}
\end{equation}
where $u_0 \in \mathscr{D}'(\mathbb{R}^n)$, $f \in C([0,T];\mathscr{D}'(\mathbb{R}^n))$ and 
\begin{equation}\label{eq.P}
    P=D_t-\sum_{i,j=1}^nD_{x_i}(a_{ij}(x)D_{x_j})- \sum_{k=1}^nb_k(x)D_{x_k}-V(x),
\end{equation}
with $a_{ij},b_k,V \in \mathscr{D}'(\mathbb{R}^n)$, for all $i,j,k=1,\dots,n$, non-smooth coefficients that satisfy suitable assumptions. Throughout the paper we will use the notation $D_t=-i\partial_t$ and by $D_{x_j}=-i\partial_{x_j}$. We will assume that the matrix 
$\mathcal{A}=(a_{ij}(x)))_{i,j=1}^n$ is real and symmetric and satisfy a non-degeneracy condition. The term ultrahyperbolic in this context refers to the fact that the operator $A$ involves different directions in space that have opposing effects on wave propagation. A typical example in $\mathbb{R}^2$ is given by a diagonal $2\times 2$-matrix $\mathcal{A}$ where $a_{11}(x)=c_1+\widetilde{a}_{11}(x)$ and $a_{22}(x)=c_2+\widetilde{a}_{22}(x)$ are suitable real perturbations of constant coefficients $c_1>0$ and $c_2\neq 0$. The Cauchy problem \eqref{mainprobIntro} has been studied in \cite{FT,KPRV} for space-dependent variable coefficients using the machinery of pseudodifferential calculus, and more recently in \cite{IPT}, where well-posedness in Sobolev spaces is established through a different approach. The methods involved heavily depend on the regularity of the coefficients, mainly continuity in $t$ and smoothness with respect to the space variable $x$, and limit the physical applications that might involve discontinuous objects, as jump functions, delta of Dirac, etc.

The aim of this paper is to drop the traditional regularity assumptions in \eqref{mainprobIntro} and prove that the corresponding Cauchy problem is well-posed in the very weak sense. The notion of \textit{very weak solution} has been introduced in the context of hyperbolic equations in \cite{GR} to deal with discontinuous coefficients and employed in related work on hyperbolic equations and systems with multiplicities \cite{DGL, DGL2, G, GB, MRT19}. The same ideas have been recently applied in the context of Schr\"odinger type equations with singular coefficients in \cite{ARST, AACG1, AACG, RST, RY}. In a nutshell, since the presence of distributional coefficients might not allow a meaningful definition of the operator $P$, our approach is to replace $P$ with a family of regularised operators $(P_\eps)_\eps$ where $\eps\to 0$. This is done via convolution with mollifiers of the type $\varphi_{\omega(\eps)}(x)=\eps^{-n}\varphi(x/\omega(\eps))$ where $\omega$ is a regularising net tending to $0$. Our aim is to identify a suitable regularisation method (mollifier and scale) such that the regularised Cauchy problem 
\[
\begin{cases}
    P_\eps u=f_\eps \quad \text{in} \ (0,T] \times \R^n, \\
    u(0,\cdot)=u_{0,\eps}\quad \text{in} \ \R^n,
\end{cases}
\]
is well-posed and the solution $(u_\eps)_\eps$ fulfills \textit{moderate} estimates (i.e. $O(\eps^{-N})$ for some $N\in\N_0$) with respect to the parameter $\eps>0$. This means to prove that a very weak solution $(u_\eps)_\eps$ exists. Its uniqueness is proven modulo negligible nets, i.e., negligible perturbations (i.e. $O(\eps^{q})$ for all $q\in\N_0$) of  coefficients and initial data lead to negligible perturbation of the very weak solution.
The paper is structured as follows.

Section 2 collects some preliminaries notions on pseudodifferential operators, Sobolev spaces, Sobolev mapping properties and regularisation via convolution with a mollifier. Since Sobolev spaces are the natural environment where to study well-posedness we will work with Sobolev-moderate and negligible nets. Section 3 is devoted to ultrahyperbolic operators with singular coefficients, in particular the regularisation of the principal part 
\[
A=\sum_{i,j=1}^nD_{x_i}(a_{ij}(x)D_{x_j}),
\]
where $a_{ij}(x)=c_{ij}+\tilde{a}_{ij}(x)$, $c_{ij} \in \R$ and $\tilde{a}_{ij} \in W^{1,\infty}(\R^n)$ is real valued, for all $i,j=1,\dots,n$. We identify a set of hypotheses on the regularised operator $A_\eps$ that allow to generalise the Doi's lemma for ultrahyperbolic operators with regular coefficients to this singular context. This is fundamental step in the proof of our well-posedness result. The full statement of the problem (involving lower order terms as well) and the proof of existence and uniquenss of a $H^\infty$-very weak solution are the contents of Section 4 and 5. Our proof method makes use of smoothing estimates as in \cite{FR,FS,FT} combined with careful approximation techniques that states clearly the dependence on the regularising scale. 
We conclude the paper with Section 6 where consistency with the classical result, i.e., when the equation coefficients are smooth then every weak solution will converge to the classical solution as $\eps\to 0$.

\section{Preliminaries}
\textbf{Notation.} We make use of the notations $D_t=-i\partial_t$ and $D_{x_j}=-i\partial_{x_j}$. We write $\langle \cdot \rangle=(1+|\cdot|^2)^{1/2}$. By $\mathbb{N}_0$ we mean $\mathbb{N}\cup \lbrace 0 \rbrace$. We denote by $(\cdot,\cdot)_0$ the $L^2$ product. For $s \in \mathbb{R}$ we denote by $H^s(\R^n)$ the Sobolev space of order $s$. With $\langle \cdot, \cdot \rangle$ we denote the standard Euclidean product. The space $C^\infty_b(\R^n)$ is the space of bounded $C^\infty$ functions on $\R^n$, with bounded derivatives at any order. 

\subsection{Basics of pseudodifferential calculus and Sobolev spaces}\label{subsec.pseudo}
In this subsection we recall the basics of pseudodifferential calculus we need throughout the work. For the sake of completeness we start with the notion of (standard) symbol in $\mathbb{R}^{n}\times \mathbb{R}^n$.

Let $a \in C^\infty(\R^{n}\times \R^n)$ and $m \in \mathbb{R}$. We say that \textit{$a$ is a symbol of order $m$}, and we write $a \in S^m(\R^{n}\times \R^n)$ (or simply $a \in S^m$), 
if for all $\alpha,\beta \in \mathbb{N}_0^{n}$  there exists $C_{\alpha,\beta}>0$ such that
\begin{equation}
\label{eq.symbol}
|{\partial_x^\beta\partial_\xi^{\alpha}a(x,\xi)}|\leq C_{\alpha\beta}\langle\xi\rangle^{m-|\alpha|}, \quad (x,\xi) \in \R^n \times \R^n.
\end{equation}

The set of symbols of order $m$ is a Fr\'echet space where, if $a \in S^m$, the semi-norms are given by
\begin{equation*}
\abs{a}_k^{(m)}:=\max_{\abs{\alpha}+\abs{\beta}\leq k}\sup_{(x,\xi) \in \R^n \times \R^n}\abs{\partial_x^\beta\partial_\xi^\alpha a(x,\xi) }\langle \xi \rangle^{-(m-\abs{\alpha})}, \quad k \in \mathbb{N}_0.
\end{equation*}
Equivalently, note that the Fr\'echet topology can be given by using the seminorms 

\begin{equation*}
|a|^{(m)}_{\alpha,\beta}:=\sup_{(x,\xi)\in \mathbb{R}^n\times \mathbb{R}^n}|\partial_\xi^{\alpha}\partial_x^\beta a(x,\xi)|\langle{\xi}\rangle^{-(m-|\alpha|)}, \quad \alpha,\beta \in \mathbb{N}_0^n.
\end{equation*}

If $a \in S^m$, we can associate
to it a \textit{pseudodifferential operator}
using the quantization formula
\begin{equation*}
\mathrm{Op}(a)u(x):= (2\pi)^{-n}\int_{\mathbb{R}^{2n}}e^{i\langle x-y, \xi \rangle} a(x,\xi)u(y)dyd\xi, \quad u \in \mathscr{S}(\mathbb{R}^n).
\end{equation*}

It is easy to verify that $\mathrm{Op}(a)$ 
is a linear operator that acts continuosly on 
$\mathscr{S}(\mathbb{R}^n)$ and extends by duality
to a linear continuous operator on $\mathscr{S}'(\mathbb{R}^n)$. If $A=\mathrm{Op}(a)$, for some $a \in S^m$, we write $A \in \Psi^m(\R^n \times \R^n)$ (or simply $A \in \Psi^m$). 

Furthermore, recall that, if $a \in C^\infty (\R^n \times \R^n)$ is a real-valued smooth function, $H_a$ denotes the \textit{Hamilton vector field} associated with it, which, in standard symplectic coordinates $(x,\xi) \in \R^n\times \R^n$, can be defined as 
\begin{equation}\label{eq.defHa}
H_a:=\sum_{j=1}^n \Bigl( (\partial_{\xi_j}a)\partial_{x_j}-(\partial_{x_j}a)\partial_{\xi_j} \Bigr)
\end{equation}
and then (with $b \in C^\infty (\mathbb{R}^n \times \mathbb{R}^n)$)
\begin{equation}\label{eq.defPoisson}
    \lbrace a, b \rbrace(x,\xi):=H_ab(x,\xi)=\sum_{j=1}^n \Bigl( \partial_{\xi_j}a\partial_{x_j}b-\partial_{x_j}a\partial_{\xi_j}b \Bigr)(x,\xi) \quad (x,\xi)\in \mathbb{R}^n \times \mathbb{R}^n,
\end{equation}
where $\lbrace \cdot, \cdot \rbrace$ are known as \textit{the Poisson brackets}. 

Our well-posedness result will be formulated in the context of \textit{Sobolev spaces}, defined through $\Lambda^s=\mathrm{Op}(\langle \xi \rangle^s)$, as
\[
H^s(\R^n):=\lbrace u \in\mathscr{S}'(\R^n); \ \Lambda^su \in L^2(\R^n) \rbrace, \quad s \in \R,
\]
and endowed with the inner product and the norm given respectively by
\begin{equation*}
(u,v)_{s}:=(\Lambda^s u, \Lambda^s v)_0, \ \ \|u\|_{s}:=(u,u)^{1/2}_{s}, \quad u,v \in H^s(\R^n).
\end{equation*}
Recall also that the space $H^\infty(\R^n)$ and $H^{-\infty}(\R^n)$ are defined by 
\begin{equation}\label{eq.defHinfty-infty}
H^\infty(\R^n):=\bigcap_{s \in \R}H^s(\R^n), \quad H^{-\infty}(\R^n):=\bigcup_{s \in \R} H^s(\R^n),
\end{equation}
respectively. We conclude this subsection by summarizing the following fundamental results of the standard pseudodifferential calculus (cf. Section 2.1 in \cite{KPRV}) that we will use in the next sections.
\begin{Th}\label{theo.bound}
Let $a \in S^m$, with $m \in \R$, and let $s \in \R$. Then $\mathrm{Op}(a)$ extends to a bounded linear operator from $H^{s+m}(\R^n)$ to $H^s(\R^n)$ and 
there exist $k=k(n,m,s) \in \mathbb{N}_0$ and $C=C(n,m,s)>0$ such that
\begin{equation*}
    \|\mathrm{Op}(a)u\|_s \leq C |a|_k^{(m)}\|u\|_{s+m}, \quad u \in H^{s+m}(\R^n).
\end{equation*}
\end{Th}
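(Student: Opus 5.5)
The plan is to reduce the statement to $L^2$-boundedness of an operator of order zero and then prove that $L^2$ estimate by the Cotlar--Stein almost-orthogonality lemma. The reduction uses conjugation by $\Lambda^s$. For $u \in \mathscr{S}(\R^n)$ we have $\|\mathrm{Op}(a)u\|_s=\|\Lambda^s\mathrm{Op}(a)\Lambda^{-s-m}\Lambda^{s+m}u\|_0$, so it suffices to show
\[
\|B v\|_0 \leq C|a|^{(m)}_k\|v\|_0, \qquad B:=\Lambda^s\mathrm{Op}(a)\Lambda^{-s-m}.
\]
The composition formula gives $B=\mathrm{Op}(b)$ with $b\in S^0$. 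The key quantitative point is that each seminorm $|b|^{(0)}_j$ is bounded by $C_{j}|a|^{(m)}_{k(j)}$. This follows because $b$ is bilinear in $a$ and the fixed symbols $\langle\xi\rangle^s$, $\langle\xi\rangle^{-s-m}$, and $\Lambda^{-s-m}$ is a Fourier multiplier. Indeed, $\mathrm{Op}(a)\Lambda^{-s-m}=\mathrm{Op}(a\langle\xi\rangle^{-s-m})$ exactly. Only the left composition with $\Lambda^s$ needs an oscillatory-integral representation of the symbol, which can be estimated directly, with integrations by parts in $(y,\eta)$ costing finitely many derivatives of $a$.

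The heart of the proof is the case $m=0$, $s=0$: $L^2$-boundedness of $\mathrm{Op}(b)$ with a bound by finitely many seminorms. I would use a dyadic partition of unity $1=\sum_{j\ge0}\psi_j(\xi)$ and write $\mathrm{Op}(b)=\sum_j T_j$, where $T_j=\mathrm{Op}(b\psi_j)$. Alternatively, a cleaner route is to partition both $x$ and $\xi$ into unit cubes after rescaling. I would compute the kernels of $T_j^*T_k$ and $T_jT_k^*$. Integration by parts in $x$ (respectively $\xi$), using symbol estimates up to order about $n+1$ in $x$ and $\xi$, gives $\|T_j^*T_k\|+\|T_jT_k^*\|\le C2^{-|j-k|}(|b|^{(0)}_{N})^2$. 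Cotlar--Stein then yields $\|\mathrm{Op}(b)\|_{L^2\to L^2}\le C|b|^{(0)}_N$ with $N=N(n)$. An alternative route is Hörmander's trick. One sets $M=2\sup|b|^2$ and builds $c=(M-|b|^2)^{1/2}\in S^0$, then uses $\mathrm{Op}(b)^*\mathrm{Op}(b)+\mathrm{Op}(c)^*\mathrm{Op}(c)=M+R$ with $R\in S^{-1}$, reducing to negative order, where Schur's test on the kernel works. That route needs iteration, and tracking the seminorm dependence through the square root is messier. So I would commit to Cotlar--Stein.

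Finally, density of $\mathscr{S}$ in $H^{s+m}$ extends $\mathrm{Op}(a)$ to a bounded operator with the stated bound. The extension agrees with the $\mathscr{S}'$ extension by duality, since both are continuous on $\mathscr{S}'$ restricted to $H^{s+m}$. The main obstacle is the Cotlar--Stein kernel estimate for $T_j^*T_k$: controlling the near-diagonal dyadic pieces uniformly with only finitely many derivatives, so that $k$ depends only on $(n,m,s)$. The symbol-composition seminorm bookkeeping is routine but tedious. Since the paper cites Section 2.1 of \cite{KPRV} for this theorem, one could alternatively just invoke the standard Calderón--Vaillancourt theorem at that step.
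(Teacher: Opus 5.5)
The paper does not prove this statement. It quotes it from the standard calculus (Section~2.1 of \cite{KPRV}), so it rests on the Calder\'on--Vaillancourt theorem together with the seminorm bookkeeping of the composition formula. Your proposal is a correct, self-contained derivation of the same fact, so the comparison is between a citation and an actual argument.

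Your reduction is the efficient one. The identity $\mathrm{Op}(a)\Lambda^{-s-m}=\mathrm{Op}(a\langle\xi\rangle^{-s-m})$ holds exactly for the left quantization. The remaining left composition with $\Lambda^s$ is controlled by the symbol-level estimates of Theorem~\ref{theo.comp}, which need no $L^2$ theory, so there is no circularity. Your route makes transparent why the constant involves a single fixed seminorm $|a|^{(m)}_k$ with $k=k(n,m,s)$. That is exactly the feature the paper exploits later, when it feeds $\omega(\eps)$-dependent symbol bounds into this theorem. The citation buys brevity.

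One step in your Cotlar--Stein argument is misdescribed and would not work as literally stated.

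\emph{The $T_jT_k^*$ terms.} With the $\xi$-dyadic pieces $T_j=\mathrm{Op}(b\psi_j)$, the kernel of $T_jT_k^*$ is $(2\pi)^{-n}\int e^{i(x-y)\cdot\xi}b(x,\xi)\overline{b(y,\xi)}\psi_j(\xi)\psi_k(\xi)\,d\xi$. This vanishes identically once $\mathrm{supp}\,\psi_j\cap\mathrm{supp}\,\psi_k=\emptyset$. So integration by parts in $\xi$ is really needed only for the uniform bound $\sup_j\|T_j\|\le C|b|^{(0)}_{n+1}$: rescale $\xi\mapsto 2^j\xi$, $x\mapsto 2^{-j}x$, then apply Schur's test.

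\emph{The $T_j^*T_k$ terms.} Here, integrating by parts in $x$ alone gives nothing usable. The frequency-side kernel $\int e^{ix\cdot(\xi-\eta)}\overline{b(x,\eta)}b(x,\xi)\,dx$ is not a function, because $b$ has no decay in $x$. You must treat the spatial kernel
\[
K_{jk}(y,y')=(2\pi)^{-2n}\iiint e^{ix\cdot(\xi-\eta)}e^{i(y\cdot\eta-y'\cdot\xi)}\,\overline{b(x,\eta)}\,b(x,\xi)\,\psi_j(\eta)\psi_k(\xi)\,dx\,d\eta\,d\xi
\]
as an oscillatory integral and integrate by parts in all three variables:
\begin{itemize}
\item in $x$, gaining $2^{-\max(j,k)}$ from $|\xi-\eta|\gtrsim 2^{\max(j,k)}$ when the supports are separated;
\item in $\eta$ and $\xi$, gaining $\langle 2^j(x-y)\rangle^{-n-1}$ and $\langle 2^k(x-y')\rangle^{-n-1}$.
\end{itemize}
Only then does the $x$-integral converge. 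Schur's test then gives $\|T_j^*T_k\|\lesssim 2^{-\max(j,k)}\bigl(|b|^{(0)}_N\bigr)^2$, which is what Cotlar--Stein needs.

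Alternatively, the phase-space unit-cube decomposition you mention avoids this issue. No rescaling is needed there, since $S^0\subset S^0_{0,0}$ with dominated seminorms. In that decomposition all kernels of $T^*_{\mu\nu}T_{\mu'\nu'}$ and $T_{\mu\nu}T^*_{\mu'\nu'}$ are absolutely convergent from the start.
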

\begin{Th}\label{theo.comp}
Let $a \in S^{m_1}$, $b \in S^{m_2}$, with $m_1,m_2 \in \R$. 
Then
\begin{equation*}
    \mathrm{Op}(a)\mathrm{Op}(b)=\mathrm{Op}(ab)+\mathrm{Op}(r_{m_1+m_2-1}),
\end{equation*}
where $r_{m_1+m_2-1} \in S^{m_1+m_2-1}$ and, for each $k\in \mathbb{N}_0$, there exists $k_1\in \mathbb{N}_0$ and $c_1>0$, depending on $m_1,m_2,k$, such that
\begin{equation*}
    |r_{m_1+m_2-1}|^{(m_1+m_2-1)}_{k}\leq c_1 \, |a|_{k_1}^{(m_1)}|b|_{k_1}^{(m_2)}.
\end{equation*}
In particular,
\begin{equation*}
    [\mathrm{Op}(a),\mathrm{Op}(b)]=\mathrm{Op}(i^{-1}\lbrace a, b \rbrace)+\mathrm{Op}(r_{m_1+m_2-2}),
\end{equation*}
with $r_{m_1+m_2-2} \in S^{m_1+m_2-2}$ and, for each $k\in \mathbb{N}_0$, there exists $k_2\in \mathbb{N}_0$ and $c_2>0$, depending once again on $m_1,m_2,k$, such that
\begin{equation*}
    |r_{m_1+m_2-2}|^{(m_1+m_2-2)}_{k}\leq c_2 \, |a|_{k_2}^{(m_1)}|b|_{k_2}^{(m_2)}.
\end{equation*}
\begin{Th}\label{theo.adjoint}
    Let $a \in S^m$, with $m \in \R$. Then 
    \[
    (\mathrm{Op}(a))^\ast=\mathrm{Op}(\bar{a})+\mathrm{Op}(r_{m-1}),
    \]
    with $r_{m-1} \in S^{m-1},$
    and for each $k \in \mathbb{N}_0$ there exists $k' \in \mathbb{N}_0$ and $c'>0$, depending on $m$ and $k$, such that 
    \[
    |r_{m-1}|^{(m-1)}_{k}\leq c'|a|^{(m)}_{k'}.
    \]
\end{Th}

\end{Th}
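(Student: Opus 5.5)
Since the adjoint statement (Theorem~\ref{theo.adjoint}) is nested inside Theorem~\ref{theo.comp}, the plan proves the composition formula first, then the commutator formula, and finally the adjoint result by the same method.

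\textbf{Composition.} I start from the composition symbol
\[
\sigma(x,\xi)=(2\pi)^{-n}\iint e^{-i\langle y,\eta\rangle}a(x,\xi+\eta)\,b(x+y,\xi)\,dy\,d\eta ,
\]
which is an oscillatory integral, so that $\mathrm{Op}(a)\mathrm{Op}(b)=\mathrm{Op}(\sigma)$ on $\mathscr S$. Taylor expanding $a(x,\xi+\eta)$ in $\eta$ to first order with integral remainder, and integrating by parts with $\eta^\alpha e^{-i\langle y,\eta\rangle}=(i\partial_y)^\alpha e^{-i\langle y,\eta\rangle}$, gives
\[
\sigma=ab+r_{m_1+m_2-1},
\]
\[
r_{m_1+m_2-1}(x,\xi)=\sum_{|\alpha|=1}\int_0^1(2\pi)^{-n}\iint e^{-i\langle y,\eta\rangle}\,\partial_\xi^\alpha a(x,\xi+\theta\eta)\,D_x^\alpha b(x+y,\xi)\,dy\,d\eta\,d\theta .
\]
The zero-order term reproduces $ab$ because $\iint e^{-i\langle y,\eta\rangle}b(x+y,\xi)\,dy\,d\eta=(2\pi)^n b(x,\xi)$.

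\textbf{Remainder estimate (main step).} I must show $r_{m_1+m_2-1}\in S^{m_1+m_2-1}$ with a seminorm bound bilinear in those of $a$ and $b$. Each integrand involves $\partial_\xi^\alpha a\in S^{m_1-1}$ and $D_x^\alpha b\in S^{m_2}$. First I note that $\partial_x^\beta\partial_\xi^\gamma$ of the remainder distributes by Leibniz onto the two factors, so it suffices to bound the undifferentiated expression, with symbols of the same type, uniformly in $\theta$. To make the integral absolutely convergent I insert $\langle y\rangle^{-2N}(1-\Delta_\eta)^N$ and $\langle\eta\rangle^{-2M}(1-\Delta_y)^M$ and integrate by parts. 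This produces
\[
\bigl|\cdot\bigr|\le C\,|a|^{(m_1)}_{k_1}\,|b|^{(m_2)}_{k_1}\iint\langle y\rangle^{-2N}\langle\eta\rangle^{-2M}\,\langle\xi+\theta\eta\rangle^{m_1-1}\,\langle\xi\rangle^{m_2}\,dy\,d\eta .
\]
Peetre's inequality $\langle\xi+\theta\eta\rangle^{m_1-1}\le 2^{|m_1-1|}\langle\xi\rangle^{m_1-1}\langle\eta\rangle^{|m_1-1|}$ then handles the shifted argument. Choosing $2N>n$ and $2M>n+|m_1-1|$ yields the bound $C\langle\xi\rangle^{m_1+m_2-1}$. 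The $\xi$-derivatives gain $\langle\xi\rangle^{-|\gamma|}$ from the symbol estimates, with the same Peetre argument. Tracking how many derivatives were used gives $k_1=k_1(m_1,m_2,k)$. The delicate point is the rigorous justification of the integration by parts in the oscillatory integral; I would handle it by inserting a cutoff $\chi(\delta y,\delta\eta)$ and letting $\delta\to0$.

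\textbf{Commutator and adjoint.} For the commutator I expand one order further, to second order in $\eta$. This gives
\[
\sigma=ab+\sum_{|\alpha|=1}\partial_\xi^\alpha a\,D_x^\alpha b+\tilde r ,
\]
with $\tilde r\in S^{m_1+m_2-2}$ bounded bilinearly by the same argument. Subtracting the analogous expansion with $a$ and $b$ exchanged, the products $ab$ cancel and the first-order terms combine to
\[
\sum_j\bigl(\partial_{\xi_j}a\,D_{x_j}b-\partial_{\xi_j}b\,D_{x_j}a\bigr)=i^{-1}\{a,b\},
\]
which is the formula claimed in Theorem~\ref{theo.comp}. The adjoint statement is proved the same way, using the adjoint symbol
\[
a^\ast(x,\xi)=(2\pi)^{-n}\iint e^{-i\langle y,\eta\rangle}\,\bar a(x+y,\xi+\eta)\,dy\,d\eta
\]
and expanding to first order in $\eta$. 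This gives $\bar a+r_{m-1}$ with $|r_{m-1}|^{(m-1)}_k\le c'|a|^{(m)}_{k'}$.
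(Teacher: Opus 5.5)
Your argument is correct: the oscillatory-integral formulas for the composition and adjoint symbols, the Taylor expansion in $\eta$ with integral remainder, and the integrations by parts via $\langle y\rangle^{-2N}(1-\Delta_\eta)^N$ and $\langle\eta\rangle^{-2M}(1-\Delta_y)^M$ combined with Peetre's inequality give the bilinear (resp.\ linear) seminorm bounds claimed, which is the quantitative feature the paper later uses to track the dependence on $\omega(\varepsilon)$. The paper itself gives no proof and only recalls these facts from the standard calculus (Section 2.1 of \cite{KPRV}), whose proof is essentially the one you outline, so there is no different route to compare.
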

\begin{Th}\label{theo.ShGa}
    Let $a \in S^1$ be such that there exists $R>0$ so that
    \[
    \mathrm{Re}(a)(x,\xi) \geq 0, \quad \text{for} \ \ |\xi|\geq R.
    \]
    Then, there exist $k=k(n) \in \mathbb{N}_0$ and $C=C(n,R)>0$ such that
    \begin{equation*}
        \mathrm{Re} \, (\mathrm{Op}(a)u,u)_0 \geq -C |a|^{(1)}_k\|u\|_0^2, \quad u \in \mathscr{S}(\R^n).
    \end{equation*}
\end{Th}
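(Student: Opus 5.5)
The plan is to reduce the statement to the classical sharp G\aa rding inequality for \emph{nonnegative real} first order symbols, and then prove that case by Friedrichs symmetrization (equivalently, anti-Wick quantization adapted to the scale $\langle\xi\rangle^{1/2}$). Throughout I keep track of the fact that every remainder depends \emph{linearly} on $a$.

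\textbf{Step 1 (real part).} Since $\mathrm{Re}\,(\mathrm{Op}(a)u,u)_0=\tfrac12\bigl((\mathrm{Op}(a)+\mathrm{Op}(a)^\ast)u,u\bigr)_0$, Theorem \ref{theo.adjoint} gives
\[
\mathrm{Re}\,(\mathrm{Op}(a)u,u)_0=(\mathrm{Op}(p)u,u)_0+(\mathrm{Op}(r_0)u,u)_0 ,
\]
with $p=\mathrm{Re}\,a\in S^1$ and $r_0\in S^0$. Here $|r_0|^{(0)}_k\le c'|a|^{(1)}_{k'}$. By Theorem \ref{theo.bound} with $m=s=0$, the second term is bounded below by $-C|a|^{(1)}_{k''}\|u\|_0^2$.

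\textbf{Step 2 (global nonnegativity).} Choose $\psi\in C_c^\infty(\R^n)$ with $0\le\psi\le1$ and $\psi=1$ on $\{|\xi|\le R\}$. For $|\xi|\le R$ we have $p\ge -|a|^{(1)}_0\langle R\rangle$. Hence
\[
p_1:=p+|a|^{(1)}_0\langle R\rangle\,\psi(\xi)
\]
is a real symbol in $S^1$ with $p_1\ge0$ everywhere, and its seminorms are bounded by $C(R)\,|a|^{(1)}_k$. The correction $\mathrm{Op}(\psi)=\psi(D)$ has $L^2$ norm at most $1$, so it only costs $|a|^{(1)}_0\langle R\rangle\|u\|_0^2$. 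This is where the dependence $C=C(n,R)$ enters.

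\textbf{Step 3 (sharp G\aa rding for $p_1\ge0$).} I would use the Friedrichs construction. Fix $q\in C_c^\infty(\R^n)$ even, real, with $\int q^2=1$. For each $\xi$ define the localized window
\[
F(x,\xi,y,\eta)=\langle\xi\rangle^{-n/4}q\bigl((\eta-\xi)\langle\xi\rangle^{-1/2}\bigr)\cdots,
\]
arranged so that the operator $p_1^F$, defined as an integral of rank-one projections weighted by $p_1\ge0$, is manifestly positive: $(p_1^Fu,u)_0\ge0$.

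The core claim is that $p_1^F-\mathrm{Op}(p_1)=\mathrm{Op}(\rho)$ with $\rho\in S^0$. To see this, expand $p_1(y,\eta)$ to second order around $(x,\xi)$. The zeroth order term reproduces $p_1$. The first order terms vanish by the evenness of $q$. The second order terms carry the factors $\partial_\xi^2p_1\cdot\langle\xi\rangle\sim\langle\xi\rangle^{0}$, $\partial_x^2p_1\cdot\langle\xi\rangle^{-1}\sim\langle\xi\rangle^{0}$ and $\partial_x\partial_\xi p_1\sim\langle\xi\rangle^0$, so they land in $S^0$.

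The map $p_1\mapsto\rho$ is linear and continuous from $S^1$ to $S^0$. By the closed graph/Banach–Steinhaus argument for Fréchet spaces, continuity means exactly $|\rho|^{(0)}_k\le C|p_1|^{(1)}_{k_1}$ for some finite $k_1$. Then Theorem \ref{theo.bound} gives $(\mathrm{Op}(p_1)u,u)_0\ge -C|p_1|^{(1)}_{k}\|u\|_0^2$. Combining Steps 1–3 yields the claim with $k=k(n)$.

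\textbf{Main obstacle.} The hard part is Step 3: verifying that the Friedrichs remainder really lies in $S^0$ with seminorms controlled linearly. This requires the window to be at scale $\langle\xi\rangle^{1/2}$ in frequency and $\langle\xi\rangle^{-1/2}$ in space. It also requires symbol estimates for the amplitude $F$, which is not of standard type, so one has to reduce it to a standard symbol via the usual oscillatory integral expansion. I would follow the classical treatment (Kumano-go, or Lerner's anti-Wick approach), checking at each step that only finitely many seminorms of $p_1$ appear.
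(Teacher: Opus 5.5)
The paper gives no proof of this statement. It is quoted, together with Theorems \ref{theo.bound}--\ref{theo.adjoint}, as a standard fact of the calculus from Section 2.1 of \cite{KPRV}, which in turn rests on the classical Friedrichs-symmetrization proof. Your proposal is correct and is essentially that classical proof made explicit.

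Steps 1 and 2 are fine as written. The cut-off $\psi$ equal to $1$ on $\{|\xi|\le R\}$ is exactly where $C=C(n,R)$ comes from, and $\|\psi(D)\|_{L^2\to L^2}\le 1$. These steps show why a single seminorm of $a$ controls the constant, which the citation leaves implicit.

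Your route does not buy independence from the literature. The technical core of Step 3 is that $\sigma(p_1^F)-p_1\in S^0$, with seminorms bounded by finitely many seminorms of $p_1$; the cancellation of first-order terms is essential there, since they would a priori be of order $1/2$. You still defer this to Kumano-go or Lerner, so in the end both arguments rest on the same classical theorem.

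Three minor points:
\begin{enumerate}
\item $p_1$ is not linear in $a$, because of the factor $|a|^{(1)}_0$. You only use the homogeneous bound $|p_1|^{(1)}_k\le C(R)|a|^{(1)}_k$, which does hold.
\item The closed graph theorem gives the seminorm estimate only after you know that $p\mapsto\sigma(p^F)-p$ maps $S^1$ into $S^0$. That is itself the content of the Friedrichs theorem, and in practice its proof yields the finite-seminorm bound directly.
\item The window need not depend on $(x,y)$. Take the Fourier multiplier $F(D,\zeta)$ with real symbol $\langle\xi\rangle^{-n/4}q\bigl((\zeta-\xi)\langle\xi\rangle^{-1/2}\bigr)$. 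Positivity then follows by writing $(p_1^F u,u)_0$ as an integral of $p_1(x,\zeta)\,|F(D,\zeta)u(x)|^2$ over $(x,\zeta)$.
\end{enumerate}
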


\subsection{Regularisations via convolution with a mollifier}\label{subsec.reg}
In this subsection we establish several useful results concerning the regularisation of both the Cauchy data and the coefficients by means of a mollifier. We also specify the mollifier employed throughout the paper and the scale, together with their basic properties.

Let $\varphi \in \mathscr{S}(\R^n)$ be such that $\varphi \geq 0$ and $\int_{\R^n}\varphi(x)dx=1$. Furthermore, let $\omega=\omega(\varepsilon)$, with $\varepsilon \in (0,1]$, be a positive scale, i.e. $\omega$ is a positive and bounded function, such that $\omega(\varepsilon) \rightarrow 0$ as $\varepsilon \rightarrow 0^+$ and $\omega(\varepsilon)\ge c_k\varepsilon^{k}$ for some $k\in\mathbb{N}_0$ and $c_k>0$. It is clearly not restrictive to assume that $\omega(\varepsilon)<1$. Accordingly, we set 
\begin{equation}\label{eq.defphiepsilon}
\varphi_{\omega(\varepsilon)}(x):=\frac{1}{(\omega(\varepsilon))^n} \varphi\Bigl(\frac{x}{\omega(\varepsilon)}\Bigr), \quad x \in \R^n, \ \ \varepsilon \in (0,1].
\end{equation}

In this work we will assume (together with other structural assumptions) that the coefficients of the principal part $A$ of the operator $P$ defined in \eqref{eq.P} belong to the Sobolev space 
\begin{equation*}
W^{1,\infty}(\R^n):=\lbrace u \in L^\infty(\R^n); \quad \partial_{x_k} u \in L^\infty(\R^n), \ \forall k=1,\dots,n \rbrace,
\end{equation*}
endowed with the norm 
\begin{equation*}   \|u\|_{W^{1,\infty}}:=\max_{|\alpha|\leq1}\| \partial^\alpha_{x}u \|_{L^\infty}.
\end{equation*}

The following result will be useful for studying the non-smooth coefficients of our operator \eqref{eq.P}.

\begin{Prop}\label{Prop.v}
Let $v\in L^\infty(\R^n)$ and $\varphi_{\omega(\eps)}$ defined as in \eqref{eq.defphiepsilon}. Hence, for all $\beta\in\N_0^n$ there exists $C_1=C_1(\|v\|_{L^\infty},\beta)>0$ such that 
\begin{equation}\label{eq.v}
|\partial^\beta_x(v\ast\varphi_{\omega(\eps)})(x)|\le C_1\omega(\eps)^{-|\beta|},
\end{equation}
holds uniformly in $\eps\in(0,1]$ and $x\in\R^n$. 

\noindent In particular, if $w \in W^{1,\infty}(\R^n)$, we have that for all $\beta \in \N_0^n$ there exists $C_2=C_2(\beta,\|w\|_{W^{1,\infty}})>0$ such that
\begin{equation}\label{eq.w}
|\partial^\beta_x(w\ast\varphi_{\omega(\eps)})(x)|\le C_2\omega(\eps)^{-|\beta|+1},
\end{equation}
uniformly in $\eps\in(0,1]$ and $x\in\R^n$.
\end{Prop}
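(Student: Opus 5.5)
The plan is to put all derivatives on the mollifier and then rescale. For $v\in L^\infty(\R^n)$ and $\beta\in\N_0^n$, differentiation under the integral sign is justified because $\varphi\in\mathscr{S}(\R^n)$, so $\partial^\beta\varphi\in L^1(\R^n)$ and $v$ is bounded. We therefore write
\[
\partial^\beta_x(v\ast\varphi_{\omega(\eps)})(x)=\int_{\R^n}v(x-y)\,\partial^\beta_y\varphi_{\omega(\eps)}(y)\,dy .
\]
By the chain rule, $\partial^\beta_y\varphi_{\omega(\eps)}(y)=\omega(\eps)^{-n-|\beta|}(\partial^\beta\varphi)(y/\omega(\eps))$. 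The change of variables $y=\omega(\eps)z$ then gives
\[
|\partial^\beta_x(v\ast\varphi_{\omega(\eps)})(x)|\le \|v\|_{L^\infty}\,\omega(\eps)^{-|\beta|}\int_{\R^n}|\partial^\beta\varphi(z)|\,dz .
\]
This is \eqref{eq.v} with $C_1=\|v\|_{L^\infty}\|\partial^\beta\varphi\|_{L^1}$, which is finite since $\varphi$ is Schwartz. The constant is independent of $x$ and of $\eps\in(0,1]$.

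For $w\in W^{1,\infty}(\R^n)$ we treat the cases $\beta=0$ and $|\beta|\ge1$ separately. When $\beta=0$, we have $|w\ast\varphi_{\omega(\eps)}|\le\|w\|_{L^\infty}\|\varphi\|_{L^1}=\|w\|_{L^\infty}$. Since $\omega(\eps)<1$, this is bounded by $\|w\|_{L^\infty}\,\omega(\eps)$ raised to the power $1$, which gives the claim with the exponent $-|\beta|+1=1$. This is the only place where the normalisation $\omega<1$ is used.

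When $|\beta|\ge1$, pick an index $k$ with $\beta_k\ge1$ and write $\beta=e_k+\beta'$. The key identity is
\[
\partial^\beta_x(w\ast\varphi_{\omega(\eps)})=(\partial_{x_k}w)\ast\partial^{\beta'}\varphi_{\omega(\eps)}.
\]
We justify it by observing that the weak derivative of a convolution of an $L^\infty$ function with a Schwartz function can be moved onto either factor. Concretely, both sides define the same distribution: test against $\psi\in C_c^\infty$ and use Fubini, together with the definition of the weak derivative $\partial_{x_k}w\in L^\infty$. Applying the first part to $v=\partial_{x_k}w\in L^\infty(\R^n)$ with multi-index $\beta'$ then gives the bound $\|\partial_{x_k}w\|_{L^\infty}\|\partial^{\beta'}\varphi\|_{L^1}\,\omega(\eps)^{-|\beta|+1}$. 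This yields \eqref{eq.w} with $C_2$ depending only on $\beta$ (and on the fixed $\varphi$) and on $\|w\|_{W^{1,\infty}}$.

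The only mildly delicate point is this transfer of one derivative from the mollifier to $w$. If a direct Fubini argument feels heavy, I would instead approximate $w$ by smooth functions, e.g. $w\ast\rho_\delta$ with $\rho_\delta$ a compactly supported mollifier. For these, $\|\partial_{x_k}(w\ast\rho_\delta)\|_{L^\infty}\le\|\partial_{x_k}w\|_{L^\infty}$ and the identity is classical. One then passes to the limit $\delta\to0$ pointwise, using the local convergence $w\ast\rho_\delta\to w$ a.e. together with dominated convergence against the integrable kernel $\partial^{\beta'}\varphi_{\omega(\eps)}$. Everything else is a routine scaling computation.
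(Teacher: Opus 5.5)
\textbf{Case $|\beta|\ge1$ and \eqref{eq.v}.} Here your argument is the paper's. You put all derivatives on the mollifier, rescale, and apply Young's inequality. For $w$ you split $\beta=e_k+\beta'$, move one derivative onto $w$, and apply the first part to $\partial_{x_k}w$. Both of your justifications for moving the derivative are acceptable (the distributional one, or smooth approximation with dominated convergence). In the approximation route you also need $\partial_{x_k}(w\ast\rho_\delta)=(\partial_{x_k}w)\ast\rho_\delta\to\partial_{x_k}w$ a.e., which holds by Lebesgue differentiation.

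\textbf{Case $\beta=0$.} This step is wrong. From $\omega(\eps)<1$ you get $\|w\|_{L^\infty}\le\|w\|_{L^\infty}\,\omega(\eps)^{-1}$, not $\|w\|_{L^\infty}\le\|w\|_{L^\infty}\,\omega(\eps)$: the inequality runs the other way. No argument can repair it. At $\beta=0$ the exponent $-|\beta|+1=1$ demands $|w\ast\varphi_{\omega(\eps)}|\le C_2\,\omega(\eps)\to0$. Yet $w\equiv1\in W^{1,\infty}(\R^n)$ gives $w\ast\varphi_{\omega(\eps)}\equiv1$. So \eqref{eq.w} as literally stated is false at $\beta=0$. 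It holds only for $|\beta|\ge1$, and at $\beta=0$ the correct statement is the $\eps$-uniform bound $\|w\|_{L^\infty}\|\varphi\|_{L^1}=\|w\|_{L^\infty}$, i.e.\ $\omega(\eps)^{0}$.

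The paper's proof tacitly makes this restriction: its splitting $\beta=\beta'+\beta''$ with $|\beta'|=1$ presupposes $|\beta|\ge1$. The estimate is also used that way later; compare the separate $|\beta|=0$ case in \eqref{prop.q}. Replace your $\beta=0$ paragraph with this restriction rather than claiming the false bound. In particular, your remark that this is where $\omega<1$ gets used should go as well.
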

\begin{proof}
To prove \eqref{eq.v} it is sufficient to note that 
\begin{equation*}
    \partial_x^\beta (v \ast \varphi_{\omega(\eps)})(x)= (v \ast \partial_y^{\beta} \varphi_{\omega(\eps)})(x), \quad x \in \R^n,
\end{equation*}
and then, by Young's convolution inequality,
\begin{equation*}
    \|\partial^\beta_x(v \ast \varphi_{\omega(\eps)})\|_{L^\infty} \leq \omega(\eps)^{-|\beta|}\|v\|_{L^\infty}\|\partial_y^\beta \varphi\|_{L^1}, \quad \forall \eps \in (0,1].
\end{equation*}
Moreover, if $\beta \in \mathbb{N}_0^n$, $\beta=\beta'+\beta''$, $|\beta'|=1$, $|\beta''|=|\beta|-1$, we have
\begin{equation*}
    \partial_x^\beta (w \ast \varphi_{\omega(\eps)})(x)= (\partial_y^{\beta'}w \ast \partial_y^{\beta''} \varphi_{\omega(\eps)})(x), \quad x \in \R^n.
\end{equation*}
Hence \eqref{eq.w} follows by
\begin{equation*}
    \|\partial^\beta_x(w \ast \varphi_{\omega(\eps)})\|_{L^\infty} \leq \omega(\eps)^{-|\beta|+1}\|w\|_{W^{1,\infty}}\|\partial_y^{\beta''}\varphi\|_{L^1}, \quad \forall \eps \in (0,1].
\end{equation*}
\end{proof}

Furthermore, the following regularisation argument will play a key role in establishing consistency with the classical theory.

\begin{Prop}\label{prop.conv}
    Let $ v \in H^s(\R^n)$, with $s \in \R$, and let $\varphi_{\omega(\eps)}$ be as in \eqref{eq.defphiepsilon}. Then, for all $\ell \in \mathbb{N}_0$ there exists a constant $C=C(\ell,\varphi)>0$ such that the estimate
    \begin{equation*}
        \|v \ast \varphi_{\omega(\eps)}\|_{s+\ell} \leq C \omega(\eps)^{-\ell}\|v\|_{s},
    \end{equation*}
   holds uniformly in $\eps$.
\end{Prop}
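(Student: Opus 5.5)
The natural route is through the Fourier transform, since both the Sobolev norm and the convolution are diagonal there. Since $\widehat{v\ast\varphi_{\omega(\eps)}}(\xi)=\widehat{\varphi}(\omega(\eps)\xi)\,\widehat{v}(\xi)$, Plancherel gives
\begin{equation*}
\|v\ast\varphi_{\omega(\eps)}\|_{s+\ell}^2=(2\pi)^{-n}\int_{\R^n}\langle\xi\rangle^{2\ell}\,|\widehat{\varphi}(\omega(\eps)\xi)|^2\,\langle\xi\rangle^{2s}|\widehat{v}(\xi)|^2\,d\xi .
\end{equation*}
So the whole statement reduces to a pointwise multiplier bound of the form
\begin{equation*}
\sup_{\xi\in\R^n}\langle\xi\rangle^{\ell}\,|\widehat{\varphi}(\omega(\eps)\xi)|\le C\,\omega(\eps)^{-\ell},
\end{equation*}
uniformly in $\eps\in(0,1]$. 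Once this holds, pulling the supremum out of the integral immediately yields $\|v\ast\varphi_{\omega(\eps)}\|_{s+\ell}\le C\omega(\eps)^{-\ell}\|v\|_s$.

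To prove the multiplier bound, I use that $\varphi\in\mathscr{S}(\R^n)$, so $\widehat{\varphi}\in\mathscr{S}(\R^n)$. In particular $M_\ell:=\sup_\eta\langle\eta\rangle^{\ell}|\widehat{\varphi}(\eta)|<\infty$. Setting $\omega=\omega(\eps)\in(0,1)$, the elementary inequality
\begin{equation*}
\langle\xi\rangle\le\omega^{-1}\langle\omega\xi\rangle
\end{equation*}
holds, since $1+|\xi|^2\le\omega^{-2}(1+\omega^2|\xi|^2)$ is equivalent to $\omega^2\le1$. Hence
\begin{equation*}
\langle\xi\rangle^{\ell}|\widehat{\varphi}(\omega\xi)|\le\omega^{-\ell}\langle\omega\xi\rangle^{\ell}|\widehat{\varphi}(\omega\xi)|\le M_\ell\,\omega^{-\ell},
\end{equation*}
and we may take $C=M_\ell$, which depends only on $\ell$ and $\varphi$.

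There is no serious obstacle here. The only points to check are these.
\begin{itemize}
\item The convolution $v\ast\varphi_{\omega(\eps)}$ is well defined for $v\in H^s\subset\mathscr{S}'$ and $\varphi_{\omega(\eps)}\in\mathscr{S}$, and the Fourier transform identity holds in $\mathscr{S}'$.
\item The standing normalisation $\omega(\eps)<1$ is what makes the inequality $\langle\xi\rangle\le\omega^{-1}\langle\omega\xi\rangle$ valid.
\end{itemize}
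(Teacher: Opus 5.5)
Your proof is correct and follows the same route as the paper: Plancherel reduces the estimate to a uniform bound on the multiplier $\langle\xi\rangle^{\ell}\,\widehat{\varphi}(\omega(\eps)\xi)$, which follows from $\widehat{\varphi}\in\mathscr{S}(\R^n)$. Your final step is more explicit than the paper's, since you spell out the inequality $\langle\xi\rangle\le\omega^{-1}\langle\omega\xi\rangle$ for $\omega\le 1$.
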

\begin{proof}
As before, we denote by $C$ a positive constant, possibly changing from line to line, independent of $\eps$.
Since $\Lambda^{s+\ell}=\mathscr{F}_{\xi \rightarrow x}^{-1} \langle \xi \rangle^{s+\ell} \mathscr{F}_{y \rightarrow \xi}$, by Plancherel's theorem, we have
\begin{equation*}
        \|v \ast \varphi_{\omega(\eps)}\|_{s+\ell}^2 \leq C \|\langle \xi \rangle^{s+\ell} \widehat{v \ast \varphi_{\omega(\eps)}} \|_0^2 \leq C \|(\langle \xi \rangle^\ell \widehat{\varphi_{\omega(\eps)}})(\langle \xi \rangle^s \hat{v})\|_0^2,
    \end{equation*}
    and then, by H\"older's inequality (applied with $p=1$ and $q=+\infty$), 
    \begin{equation*}
        \|v \ast \varphi_{\omega(\eps)} \|_{s+\ell}^2\leq C \|\langle \xi \rangle^{2\ell}\widehat {\varphi_{\omega(\eps)}}^2\|_{L^\infty} \|v\|_s^2.
    \end{equation*}
    Therefore, since $\varphi \in \mathscr{S}(\R^n)$ and $\widehat{\varphi_{\omega(\eps)}}(\xi)=\hat\varphi(\omega(\eps) \xi)$ we obtain 
    \begin{equation*}
        \|v \ast \varphi_{\omega(\eps)} \|_{s+\ell}\leq C \omega(\varepsilon)^{-\ell} \|v\|_s.
    \end{equation*}
\end{proof}

\subsection{Sobolev moderate and negligible nets}\label{subsec.modnet}
We recall here the notions of $H^s$- and $H^\infty$-moderate nets of functions (cf. \cite{GR}, \cite{AACG},\cite{AACG1}) that, in the next sections, will lead to the notion of $H^s$- and $H^\infty$-very weak solution of the problem \eqref{mainprobIntro}. Furthermore, we also recall the notion of negligible net, that will be useful to treat uniqueness in the context of very weak solutions. 
In what follows, we consider $T>0$ and $s \in \mathbb{R}$ as fixed.
\begin{Def}
    Let $(v_\varepsilon)_\varepsilon \in \lbrace C([0,T];H^s(\mathbb{R}^n) \rbrace^{(0,1]}$, with $s \in \R$. We say that the net $(v_\varepsilon)_\varepsilon$ is $H^s$-moderate if there exist $N \in \mathbb{N}_0$ and $C>0$ such that 
    \[
    \|v_\varepsilon(t,\cdot) \|_{H^s} \leq C \varepsilon^{-N}, \quad \forall t \in [0,T], \ \ \forall \varepsilon \in (0,1].
    \]
Moreover, if $(v_\varepsilon)_\varepsilon \in \lbrace C([0,T];H^\infty(\mathbb{R}^n) \rbrace^{(0,1]}$ we say that $(v_\varepsilon)_\varepsilon$ is $H^\infty$-moderate if for each $s \in \R$ there exist $N \in \mathbb{N}_0$ and $C>0$ such that 
    \[
    \|v_\varepsilon(t,\cdot) \|_{H^s} \leq C \varepsilon^{-N}, \quad \forall t \in [0,T], \ \ \forall \varepsilon \in (0,1].
    \]
\end{Def}

This notion leads to the notion of $H^s$-moderate and $H^\infty$-moderate regularisations of a distribution. 
\begin{Def}
    Let $v \in C([0,T];\mathscr{D}'( \mathbb{R}^n))$. Moreover, for $\varepsilon \in (0,1]$ and $t \in (0,1]$, define $v_\varepsilon(t,x)=(v(t,\cdot) \ast \varphi_{\varepsilon})(x)$, with $\varphi \in \mathscr{S}(\R^n)$ satisfying  $\int_{\R^n}\varphi(x)dx=1$, and $\varphi_\eps=\eps^{-n}\varphi(\cdot/\eps)$. If $(v_\varepsilon)_\varepsilon$ is $H^s$-moderate (resp. $H^\infty$-moderate), we say that $(v_\varepsilon)_\varepsilon$ is an $H^s$-moderate regularisation (resp. $H^\infty$-moderate regularisation) of $v$.
\end{Def}

\begin{Rem}\label{rmk.h-infty}
     Note that, if $v \in H^{-\infty}(\R^n)$ (recall \eqref{eq.defHinfty-infty}) by repeating essentially the same proof of Proposition \ref{prop.conv}, we get that the net $(v_\eps)_\eps$, defined by $v_\eps=v \ast \varphi_\eps$, is an $H^\infty$- regularisation of $v$.
\end{Rem}

We conclude this section with the definition of negligible nets.
\begin{Def}\label{def.negHs}
Let $(v_\varepsilon)_\varepsilon \in \lbrace C([0,T];H^s(\mathbb{R}^n) \rbrace^{(0,1]}$, with $s \in \R$. We say that $(v_\varepsilon)_\varepsilon$ is $H^s$-negligible if for  any $q \in \mathbb{N}_0$ there exists $C>0$ such that 
    \[
    \|v_\varepsilon(t,\cdot) \|_{H^s} \leq C \varepsilon^{q}, \quad \forall t \in [0,T], \ \ \forall \varepsilon \in (0,1].
    \]
    Moreover, if $(v_\varepsilon)_\varepsilon \in \lbrace C([0,T];H^{\infty}(\mathbb{R}^n) \rbrace^{(0,1]}$, we say that the net $(v_\varepsilon)_\varepsilon$ is $H^\infty$-negligible if for any $s \in \R$ and any $q \in \mathbb{N}_0$ there exists $C>0$ such that 
    \[
    \|v_\varepsilon(t,\cdot) \|_{H^s} \leq C \varepsilon^{q}, \quad \forall t \in [0,T], \ \ \forall \varepsilon \in (0,1].
    \]
\end{Def}

\section{Ultrahyperbolic operators with singular coefficients}
The goal of this section is to  study the main properties of the singular version of ultrahyperbolic operators introduced in \cite{KPRV} (cf. \cite{FT}), namely the higher-order part of the operator $P$ defined in \eqref{eq.P}. We first introduce the net of regularised operators associated with such a singular operator and state the main assumptions. We then prove a new version of Doi's lemma (see Lemma \ref{lemma.Doiepsilon} below) in our setting, which relies on the construction of a suitable net of functions $(q_\eps)_\eps$, provided by Proposition \ref{prop.qepsilon}.

\subsection{Regularisation of singular ultrahyperbolic operators}
In this subsection we consider a class of operators with non-smooth coefficients that can be formally written as
\begin{equation}\label{eq.A}
A=\sum_{i,j=1}^n D_{x_i}(a_{ij}(x)D_{x_j}),
\end{equation}
where the coefficients $a_{ij}$ are given by
\begin{equation}\label{eq.coeffA}
a_{ij}(x)=c_{ij}+\tilde{a}_{ij}(x), \quad i,j=1,\dots,n, \quad  x \in \R^n,
\end{equation}
with $c_{ij} \in \R$ and $\tilde{a}_{ij} \in W^{1,\infty}(\R^n)$ real valued, for all $i,j=1,\dots,n$, and satisfying suitable conditions. In what follows, we also denote by $\mathcal{A}(x)=(a_{ij}(x))_{i,j=1,\dots,n}$ the coefficients matrix.

\begin{Rem}
    We recall that the space $W^{1,\infty}(\mathbb{R}^n)$ essentially coincides with the space
of bounded Lipschitz functions.
Therefore, one of the interests of our theory is that it allows us to
establish well-posedness results for Cauchy problems induced by variable-coefficient operators whose
principal part is obtained as a bounded Lipschitz perturbation of a
constant-coefficient operator and, as we will see below, with much more
singular lower-order terms.
\end{Rem}

Our purpose is now to construct a net of operator $(A_\eps)_\eps$ associated with $A$ obtained via a regularisation argument. In detail, we will comvolve with a mollifier $\varphi_{\omega(\eps)}$ defined (as in \eqref{eq.defphiepsilon}) by
\[
\varphi_{\omega(\varepsilon)}(x):=\frac{1}{(\omega(\varepsilon))^n} \varphi\Bigl(\frac{x}{\omega(\varepsilon)}\Bigr), \quad x \in \R^n, \ \ \varepsilon \in (0,1],
\]
where, recall $\varphi \in \mathscr{S}(\R^n)$, $\varphi \geq 0$, $\int\varphi=1$ and $\omega$ is a positive scale, as specified in Subsection \ref{subsec.reg}.

For all $i,j=1,\dots,n$, we define the \textit{the $\varepsilon$-regularised coefficients} of $A$, defined in \eqref{eq.A}, as
\begin{equation}\label{eq.regcoeff}
a_{ij,\varepsilon}(x):=(a_{ij} \ast \varphi_{\omega(\varepsilon)})(x), \quad x \in \R^n, \ \ i,j=1,\dots,n, \ \ \varepsilon \in (0,1].
\end{equation}
We thus define, for $\varepsilon \in (0,1]$, \textit{the $\varepsilon$-regularised operator} 
\begin{equation}\label{eq.Aepsilon}
A_\varepsilon:=\sum_{i,j=1}^nD_{x_i}(a_{ij,\varepsilon}(x)D_{x_j}),
\end{equation}
and denote by $\mathcal{A}_\varepsilon(x)=(a_{ij,\varepsilon}(x))_{i,j=1}^n$ the matrix of $\varepsilon$-regularised coefficients. Finally, $(A_\varepsilon)_\varepsilon$ is \textit{the net of regularised operators} associated with $A$.

\begin{Prop}
Let $A_\eps$ be the operator defined in \eqref{eq.Aepsilon}, with $\eps \in (0,1]$. Then, $A_\eps=\mathrm{Op}(a_\eps)$ is an operator with symbol $a_\eps(x,\xi)$ fulfilling the following statement: 
for all $\alpha,\beta \in \mathbb{N}_0^n$ there exists a constant $C_{\alpha,\beta}>0$ such that
\begin{equation}\label{eq.conda}
|\partial_x^\beta\partial_\xi^\alpha a_{\varepsilon}(x,\xi)|\leq C_{\alpha,\beta} \, \omega(\varepsilon)^{-|\beta|}\langle \xi \rangle^{2-|\alpha|}, \quad \forall (x,\xi) \in \R^n \times \R^n.
\end{equation}
\end{Prop}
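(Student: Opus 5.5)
We compute the symbol of $A_\eps$ explicitly and then estimate it term by term with Proposition \ref{Prop.v}. Expanding the composition $D_{x_i}(a_{ij,\eps}D_{x_j}u)$ by the Leibniz rule gives
\begin{equation*}
A_\eps u=\sum_{i,j=1}^n a_{ij,\eps}(x)D_{x_i}D_{x_j}u+\sum_{i,j=1}^n (D_{x_i}a_{ij,\eps})(x)D_{x_j}u,
\end{equation*}
so $A_\eps=\mathrm{Op}(a_\eps)$ with
\begin{equation*}
a_\eps(x,\xi)=\sum_{i,j=1}^n a_{ij,\eps}(x)\xi_i\xi_j-i\sum_{i,j=1}^n\partial_{x_i}a_{ij,\eps}(x)\xi_j.
\end{equation*}
This is a polynomial in $\xi$ of degree $2$ with smooth coefficients in $x$. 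The coefficients are smooth because $a_{ij}$ is a $W^{1,\infty}$ function plus a constant and $\varphi\in\mathscr{S}$.

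Next we bound the $x$-derivatives of the coefficients. Since $c_{ij}\ast\varphi_{\omega(\eps)}=c_{ij}$ (because $\int\varphi=1$), we have $a_{ij,\eps}=c_{ij}+\tilde a_{ij}\ast\varphi_{\omega(\eps)}$. By \eqref{eq.w} applied to $w=\tilde a_{ij}$:
\begin{itemize}[label={}]
\item for $\beta\neq0$, $|\partial_x^\beta a_{ij,\eps}|\le C\omega(\eps)^{1-|\beta|}\le C\omega(\eps)^{-|\beta|}$, using $\omega(\eps)<1$;
\item for $\beta=0$, $|a_{ij,\eps}|\le |c_{ij}|+\|\tilde a_{ij}\|_{L^\infty}$, which is uniformly bounded.
\end{itemize}
For the first-order coefficient, \eqref{eq.w} with multi-index $\beta+e_i$ gives
\begin{equation*}
|\partial_x^\beta\partial_{x_i}a_{ij,\eps}|\le C\omega(\eps)^{-|\beta|}.
\end{equation*}
So in both cases the loss is at most $\omega(\eps)^{-|\beta|}$.

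Finally we combine these with the $\xi$-derivatives. For $|\alpha|\le2$, $\partial_\xi^\alpha(\xi_i\xi_j)$ is bounded by $C\langle\xi\rangle^{2-|\alpha|}$; for $|\alpha|\ge3$ it vanishes. Similarly, $\partial_\xi^\alpha\xi_j$ is bounded by $C\langle\xi\rangle^{1-|\alpha|}\le C\langle\xi\rangle^{2-|\alpha|}$ for $|\alpha|\le1$ and vanishes otherwise. Multiplying these by the coefficient bounds above and summing over $i,j$ gives \eqref{eq.conda}, with constants depending only on $\alpha$, $\beta$, $\varphi$ and $\max_{i,j}(|c_{ij}|+\|\tilde a_{ij}\|_{W^{1,\infty}})$, uniformly in $\eps\in(0,1]$.

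No step is a genuine obstacle. The only points needing care are the sign and quantization convention for the first-order term, which does not affect the estimate, and the use of $\omega(\eps)<1$ to absorb the better power $\omega(\eps)^{1-|\beta|}$ into $\omega(\eps)^{-|\beta|}$.
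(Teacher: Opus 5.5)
Your proof is correct and follows the paper's argument: write $a_\eps=a_{2,\eps}+a_{1,\eps}$ and bound the coefficients with Proposition \ref{Prop.v}. You apply \eqref{eq.w} with multi-index $\beta+e_i$ where the paper applies \eqref{eq.v} to $\partial_{x_i}a_{ij}\in L^\infty$, which gives the same bound. Your separate treatment of $\beta=0$ is more careful than the paper's intermediate estimate \eqref{eq.conda2}, which as written would wrongly give a factor $\omega(\eps)$ at $\beta=0$.
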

    
\begin{proof}    
Note that $A_\eps=\mathrm{Op}(a_\eps)$, where 
\begin{equation}\label{eq.defaeps}
\begin{split}
a_\varepsilon(x,\xi)&=\sum_{i,j=1}^n a_{ij,\varepsilon}(x)\xi_i\xi_j+\sum_{i,j=1}^n (D_{x_i}a_{ij,\varepsilon})(x)\xi_j,  \\
&=:a_{2,\varepsilon}(x,\xi)+a_{1,\varepsilon}(x,\xi), \quad (x,\xi)\in \R^{n}\times \R^n,
\end{split}
\end{equation}
and $a_{ij,\eps}$ defined as in \eqref{eq.regcoeff}. Then, since $\tilde{a}_{ij} \in W^{1,\infty}(\R^n)$ for all $i,j=1,\dots,n$, by \eqref{eq.w} we obtain that for all $\alpha,\beta \in \mathbb{N}_0^n$ there exists $C_{\alpha,\beta}'>0$ such that  
    
\begin{equation}\label{eq.conda2}
|\partial_x^\beta\partial_\xi^\alpha a_{2,\varepsilon}(x,\xi)|\leq C_{\alpha,\beta}' \, \omega(\varepsilon)^{-|\beta|+1}\langle \xi \rangle^{2-|\alpha|}, \quad \forall (x,\xi) \in \R^n \times \R^n,
\end{equation}
and, similarly (since $D_{x_i}a_{ij} \in L^\infty(\R^n)$ for all $i,j$) by \eqref{eq.v}, there exists $C_{\alpha,\beta}''>0$ such that 
\begin{equation}\label{eq.conda1}
|\partial_x^\beta\partial_\xi^\alpha a_{1,\varepsilon}(x,\xi)|\leq C_{\alpha,\beta}'' \, \omega(\varepsilon)^{-|\beta|}\langle \xi \rangle^{1-|\alpha|},  \quad \forall (x,\xi) \in \R^n \times \R^n.
\end{equation}
Therefore, \eqref{eq.conda} easily follows from \eqref{eq.defaeps}.
\end{proof}

In order to study the well-posedness (see Subsection \ref{subsec.sop} below) of the Cauchy problem
\eqref{mainprobIntro} associated with the operator $P$, we assume the
hypotheses below on the net of regularised operators
$(A_\varepsilon)_\varepsilon$, representing the regularised principal
part of $P$. These conditions, together with further assumptions on the
regularised lower-order parts of $P$, are satisfied for a class of
(singular) operators that motivated our work (see
Theorem~\ref{theo_with_sc}). We assume the following hypotheses.
\begin{hp}\label{hp1}
For each $\varepsilon \in (0,1]$ the coefficients matrix $\mathcal{A}_\varepsilon$ is real and symmetric.
\end{hp}
\begin{hp}\label{hp2} There exists a universal constant $\mu>0$ such that for any $\varepsilon \in (0,1]$ one has 
\begin{equation*}
\mu^{-1}|\xi| \leq |\mathcal{A}_\varepsilon(x) \cdot \xi| \leq \mu |\xi|, \quad \forall (x,\xi) \in \R^n \times \R^n.
\end{equation*}
\end{hp}
\begin{hp}\label{hp3}
There exist $\nu>0$ sufficiently small and $N \in \mathbb{N}$, $N>1$, such that, for any $\varepsilon \in (0,1]$ one has
\begin{equation*}
|\partial_{x_k} a_{ij,\varepsilon}(x)| \leq \nu \langle x \rangle^{-N}, \quad \forall x \in \R^n, \ \ i,j,k=1,\dots,n. 
\end{equation*}
\
\end{hp}

As anticipated above, let us emphasise that, once these conditions are required on the net $(A_\varepsilon)_\varepsilon$,
the operator $A$ can be interpreted as the
"singular-coefficients" version of the \textit{ultrahyperbolic
operators} studied in \cite{FT} and \cite{KPRV}.
We conclude this subsection with a few comments and motivations related to the hypotheses (H1)-(H3). 

\begin{Rem}
In the first place, it is important to note that the ultrahyperbolic condition \ref{hp2} (as named in \cite{KPRV}) is actually a non-degenerate condition. 
In particular, let us consider
\begin{equation}\label{eq.aquad}
a(x,\xi)=\langle \mathcal{A}(x)\cdot \xi,\xi \rangle, \quad (x,\xi) \in \R^n \times \R^n,
\end{equation}
where $\mathcal{A}(x)=(a_{ij}(x))$ and $a_{ij} \in C^\infty(\R^n)$ real valued with bounded derivatives at any order, for all $i,j=1,\dots,n$. 

If the symbol $a(x,\xi)$ satisfies the ellipticity condition 
\begin{equation}\label{eq.ellsymbol}
    C^{-1} |\xi|^2 \leq a(x,\xi) \leq C |\xi|^2, \quad \forall (x,\xi) \in \R^n \times \R^n,
\end{equation}
for some $C>0$, by Cauchy-Schwartz inequality (recall \eqref{eq.aquad}), we get 
\begin{equation*}
     C^{-1}|\xi|^2 \leq a(x,\xi) \leq |\mathcal{A}(x)\cdot \xi | |\xi| , \quad \forall (x,\xi) \in \R^n \times \R^n,
\end{equation*}
which implies 
\begin{equation*}
    |\mathcal{A}(x)\cdot \xi | \geq C^{-1}|\xi|, \quad \forall (x,\xi) \in \R^n \times \R^n.
\end{equation*}
Moreover, by the bounded assumption on the coefficients $a_{ij}(x)$, possibly by enlarging the constant $C>0$, we also have 
\begin{equation*}
    |\mathcal{A}(x)\cdot \xi | \leq C|\xi|, \quad (x,\xi) \in \R^n \times \R^n.
\end{equation*}
Therefore, the variable coefficient elliptic operators, whose symbol satisfies \eqref{eq.ellsymbol}, fulfills the "ultrahyperbolic condition" 
\begin{equation*}
    C^{-1}|\xi|\leq |\mathcal{A}(x)\cdot \xi | \leq C|\xi|, \quad \forall (x,\xi) \in \R^n \times \R^n.
\end{equation*}
This means that our theory includes the non-smooth variable coefficients Schr\"odinger operator of the form $P=D_t-A(x,D_x)+\text{l.o.t.}$ (where l.o.t. means lower order terms), with $A$ non-smooth elliptic operator.
\end{Rem}

 \begin{Rem}\label{rem.satshp3}
 As for the hypothesis \ref{hp3}, it is immediate to construct an operator having coefficients for which the net of regularised coefficients satisfy \ref{hp3}. Explicitly, if we consider  $(b_{ij})_{i,j=1}^n$ where $b_{ij} \in W^{1,\infty}(\R^n)$ such that $\|b_{ij}\|_{W^{1,\infty}}\leq \nu$, for $\nu>0$ sufficiently small, setting 
     \begin{equation*}
     \tilde{a}_{ij}(x):=\langle x \rangle^{-N}b_{ij}(x), \quad x \in \R^n, \ \ i,j=1,\dots,n,
     \end{equation*}
     we get that the coefficients $a_{ij,\varepsilon}$ defined as in \eqref{eq.regcoeff} satisfy \ref{hp3}. 

     \noindent Indeed, using the Peetre's inequality $\langle x-y \rangle^{-N} \leq C_N \langle x \rangle^{-N}\langle y \rangle^{N}$, for all $\varepsilon \in (0,1]$ and for all $i,j,k=1,\dots,n$, we have
     \begin{equation}\label{eq.peetre}
     \begin{split}
     |\partial_{x_k}\tilde{a}_{ij,\varepsilon}(x)|&=\Bigl|\int_{\R^n}\partial_{x_k}(\langle x-y \rangle^{-N}b_{ij}(x-y))\varphi_{\omega(\varepsilon)}(y)dy\Bigr|\\
     &\leq \int_{\R^n}|\partial_{x_k}(\langle x-y \rangle^{-N}b_{ij}(x-y))\varphi_{\omega(\varepsilon)}(y)|dy \\
     &\leq C_{N}\langle x \rangle^{-N}\|b_{ij}\|_{W^{1,\infty}}\int_{\R^n}\langle y \rangle^{N}\varphi_{\omega(\varepsilon)}(y)dy \\
     &\leq C_{N,\varphi} \, \nu \langle x \rangle^{-N}, \quad \forall x \in \R^n,
     \end{split}
     \end{equation}
     for some $C_{N,\varphi}>0$, uniformly in $\eps \in (0,1]$.
 \end{Rem}

We finally present some examples of operators that fall under the scope of our study.  
In the next example, we consider models that generalise non-degenerate smooth operators, in the singular setting, defined through a diagonal coefficients matrix (see \eqref{eq.diagmatrix}).
In this context, depending on the value of the constant $c_2$ below, 
we recover an elliptic operator with non-smooth coefficients (if $c_2 > 0$) 
and a "hyperbolic" operator (if $c_2 < 0$).

 \begin{Ex}\label{ex.ultradiag}
Let us consider on $\R^2$ the operator 
     \begin{equation*}
     A=\sum_{i,j=1}^2D_{x_i}(a_{ij}(x)D_{x_j}),
     \end{equation*}
     defined by the matrix
     \begin{equation}\label{eq.diagmatrix}
     \mathcal{A}(x)=
     \begin{pmatrix}
         a_{11}(x) & 0 \\
         0 & a_{22}(x)
     \end{pmatrix}, \quad x \in \R^2,
     \end{equation}
     where
     \begin{equation*}
     \begin{split}
     a_{11}(x)&=c_1+\tilde{a}_{11}(x),\\
     a_{22}(x)&=c_2+\tilde{a}_{22}(x),
     \end{split}
     \end{equation*}
     with $c_1>0$, $c_2\in \R\setminus \lbrace 0 \rbrace$, and $\tilde{a}_{11}(x), \tilde{a}_{22}(x) \in W^{1,\infty}(\R^2)$ real functions, such that 
     \begin{equation*}
     \begin{split}
     & 0\leq |\tilde{a}_{jj}(x)| \leq \frac{|c_j|}{2}, \quad \forall x \in \R^2, \ \ j=1,2, \\
     & |\partial_{x_i}\tilde{a}_{jj}(x)| \leq \nu \langle x \rangle^{-N}, \quad \text{for a.e.} \; x \in \R^2, \ \ i,j=1,2,
     \end{split}
     \end{equation*}
     for some $N>1$ and some $\nu>0$ sufficiently small.

     Under these assumptions, we get that \ref{hp1} is trivially satisfied. As for \ref{hp2}, for $\varepsilon \in (0,1]$, we get 
     \begin{equation*}
     |\mathcal{A}_\varepsilon(x) \cdot \xi |^2=a_{11,\varepsilon}(x)^2\xi_1^2+a_{22,\varepsilon}(x)^2\xi_2^2.
     \end{equation*}
     Hence, for all $(x,\xi) \in  \R^n\times \R^n$,
     \begin{equation*}
     \begin{split}
     &|\mathcal{A}_\varepsilon(x) \cdot \xi| \leq \max{\Bigl\lbrace\frac{3}{2}c_1,\frac{3}{2}|c_2|\Bigr\rbrace}|\xi| \\
     & |\mathcal{A}_\varepsilon(x) \cdot \xi| \geq \min{\Bigl\lbrace\frac{1}{2}c_1,\frac{1}{2}|c_2|\Bigr\rbrace}|\xi|,
     \end{split}
     \end{equation*}
     and \ref{hp2} easily follows. Finally, regarding \ref{hp3}, by reasoning as in \eqref{eq.peetre} we have
     \begin{equation}\label{eq.smallcoeffex}
     \begin{split}
     |\partial_{x_i}a_{jj,\varepsilon}(x)| &\leq \int_{\R^2}|\partial_{x_i}a_{jj}(x-y)||\varphi_{\omega(\varepsilon)}(y)|dy \\
     & \leq \nu \int_{\R^2}\langle x-y \rangle^{-N}|\varphi_{\omega(\varepsilon)}(y)|dy \\
     & \leq C_N \, \nu \int_{\R^2} \langle x \rangle^{-N}\langle y \rangle^N |\varphi_{\omega(\varepsilon)}(y)|dy \\
     & \leq C_{N,\varphi} \, \nu \langle x \rangle^{-N}, \quad \forall x \in \R^2, \ \ i,j=1,2,
     \end{split}
     \end{equation}
     for some $C_{N,\varphi}>0$, uniformly in $\eps \in (0,1]$.
 \end{Ex}

 \subsection{Doi's Lemma for ultrahyperbolic operators with singular coefficients}
  Next, for the operators under consideration the following fundamental Proposition holds (cf. \cite{FT}). 
 \begin{Prop}\label{prop.qepsilon}
     Let $A$ be an operator as in \eqref{eq.A}, with coefficients of the form \eqref{eq.coeffA} and let $(A_\varepsilon)_\varepsilon$ be the corresponding net of regularised operators. If \ref{hp1}, \ref{hp2} and \ref{hp3} hold, then there exists a net of real-valued functions $(q_\varepsilon)_\varepsilon$, with $q_\varepsilon \in C^\infty(\R^n \times \R^n)$ that satisfies the following:
     
     (i) For each $\alpha,\beta \in \mathbb{N}_0^n$ there exists a constant $C_{\alpha,\beta}>0$ such that  
    \begin{equation}\label{prop.q}
    | \partial_\xi^\alpha\partial_x^\beta q_\varepsilon(x,\xi)|\leq 
   \left\{ \begin{array}{ll}
    C_{\alpha,\beta} \langle x\rangle \langle \xi \rangle^{-|\alpha|},     & \text{if}\,\,|\beta|=0,  \\ 
     C_{\alpha,\beta} \omega(\varepsilon)^{-|\beta|+1} \langle \xi \rangle^{-|\alpha|},    &  \text{if}\,\,|\beta|\geq 1 , 
    \end{array}\right.
\end{equation}
for all $(x,\xi) \in \R^n \times \R^n$.

(ii) There exist $C_1, C_2>0$ such that 
\begin{align}
    H_{a_{2,\varepsilon}}q_\varepsilon(x,\xi)\geq C_1|\xi|-C_2,
\end{align}
for all $(x,\xi) \in \R^n \times \R^n$, where $H_{a_2,\eps}$ is the Hamilton vector field defined in \eqref{eq.defHa}.
\end{Prop}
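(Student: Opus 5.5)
The plan is to use the classical construction of Doi (as adapted in \cite{FT,KPRV} for ultrahyperbolic symbols) directly on the regularised symbol, keeping track of how the constants depend on $\eps$. The natural candidate is
\[
q_\eps(x,\xi):=\Bigl\langle x,\frac{\mathcal{A}_\eps(x)\cdot\xi}{\langle \mathcal{A}_\eps(x)\cdot \xi\rangle}\Bigr\rangle ,
\]
or, to avoid the singularity at $\xi=0$, a version in which $\langle\cdot\rangle$ is replaced by a smooth function $\theta(\mathcal{A}_\eps(x)\xi)$ that is homogeneous of degree one for $|\xi|\ge1$. By \ref{hp2}, $|\mathcal{A}_\eps(x)\xi|$ is comparable to $|\xi|$ uniformly in $\eps$. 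This choice is real valued and smooth.

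First I will verify (i). For $\beta=0$, each $\xi$-derivative of $\mathcal{A}_\eps\xi/\langle\mathcal{A}_\eps\xi\rangle$ gains $\langle\xi\rangle^{-1}$, with constants depending only on $\mu$ and on $\sup|a_{ij,\eps}|$. The latter is bounded uniformly in $\eps$, since $a_{ij}\in L^\infty$. This gives $\langle x\rangle\langle\xi\rangle^{-|\alpha|}$. For $|\beta|\ge1$, one derivative falls either on the factor $x$ or on $\mathcal{A}_\eps(x)$. If it falls on $x$, the $\langle x\rangle$ growth disappears. If it falls on $\mathcal{A}_\eps$, \ref{hp3} gives $|\partial_x a_{ij,\eps}|\le\nu\langle x\rangle^{-N}$ with $N>1$, which absorbs the factor $x$. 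Higher $x$-derivatives of $a_{ij,\eps}$ are controlled by \eqref{eq.w} as $\omega(\eps)^{-|\gamma|+1}$. Hence a total of $|\beta|$ derivatives yields at most $\omega(\eps)^{-|\beta|+1}$, since only one derivative is "free" and each further one costs $\omega^{-1}$. I will organise this through the Faà di Bruno/Leibniz formula. The point to check is that products of several derivatives of $a_{ij,\eps}$ still give the total loss $\omega^{-\sum(|\gamma_k|-1)}\le\omega^{-|\beta|+1}$, which holds because $\omega<1$.

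Next I will prove (ii). We have $a_{2,\eps}=\langle\mathcal{A}_\eps\xi,\xi\rangle$, so $\partial_\xi a_{2,\eps}=2\mathcal{A}_\eps\xi$ by \ref{hp1}, and $\partial_{x_j}a_{2,\eps}=\langle\partial_{x_j}\mathcal{A}_\eps\xi,\xi\rangle$. The main term of $H_{a_{2,\eps}}q_\eps$ comes from $\partial_\xi a_{2,\eps}$ acting on the explicit $x$ in $q_\eps$:
\[
2\langle\mathcal{A}_\eps\xi,\mathcal{A}_\eps\xi\rangle/\langle\mathcal{A}_\eps\xi\rangle\ge 2|\mathcal{A}_\eps\xi|-2\ge 2\mu^{-1}|\xi|-2 .
\]
There are two kinds of error terms. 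The first is $2\mathcal{A}_\eps\xi$ applied to the $x$-derivative of $\mathcal{A}_\eps(x)$ inside $q_\eps$. This is bounded by $C\mu|\xi|\cdot|x|\,\nu\langle x\rangle^{-N}|\xi|/\langle\xi\rangle\le C\nu|\xi|$. The second is $-\partial_xa_{2,\eps}\cdot\partial_\xi q_\eps$, which is bounded by $C\nu\langle x\rangle^{-N}|\xi|^2\cdot\langle x\rangle\langle\xi\rangle^{-1}\le C\nu|\xi|$. Crucially, the constants here depend only on $\mu,n,N$ and the uniform sup of the coefficients, not on $\eps$.

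The main obstacle is this smallness argument. Taking $\nu$ small relative to $\mu^{-1}$, which is exactly what "$\nu$ sufficiently small" in \ref{hp3} allows, the error terms are absorbed and we get $H_{a_{2,\eps}}q_\eps\ge C_1|\xi|-C_2$ with $C_1=\mu^{-1}$, say, uniformly in $\eps$. I will also handle bounded $|\xi|$ by the cutoff in $\theta$, which only affects the constant $C_2$.
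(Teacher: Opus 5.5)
Your construction is essentially the paper's. The paper takes
$q_\eps=C_1\mu^2\langle\xi\rangle^{-1}\sum_j x_j\partial_{\xi_j}a_{2,\eps}=2C_1\mu^2\langle\xi\rangle^{-1}\langle x,\mathcal{A}_\eps(x)\xi\rangle$.
This is the same Doi-type function, normalised by the $x$-independent weight $\langle\xi\rangle^{-1}$ instead of $\langle\mathcal{A}_\eps(x)\xi\rangle^{-1}$, which avoids the extra terms from differentiating the denominator in $x$. Your proof of (ii) is correct. Only first derivatives of $\mathcal{A}_\eps$ enter $H_{a_{2,\eps}}q_\eps$, the extra denominator term is again $O(\nu|\xi|)$, and no cutoff $\theta$ is needed, since $w\mapsto w/\langle w\rangle$ is already smooth.

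The gap is in (i) for $|\beta|\ge 2$. In $\partial_x^\beta q_\eps$ there is the term in which all derivatives fall on $F=G(\mathcal{A}_\eps\xi)$, $G(w)=w/\langle w\rangle$, so the explicit factor $x$ survives. Its Fa\`a di Bruno expansion contains $\langle x,(\partial G)(\mathcal{A}_\eps\xi)\,(\partial_x^\beta\mathcal{A}_\eps)\xi\rangle$, which has no first-order derivative of $\mathcal{A}_\eps$. (H3) says nothing about this term, and \eqref{eq.w} only gives $|\partial_x^\beta a_{ij,\eps}|\le C\omega(\eps)^{1-|\beta|}$ with no decay in $x$. So you get $\langle x\rangle\omega(\eps)^{1-|\beta|}$, not the claimed $\omega(\eps)^{1-|\beta|}\langle\xi\rangle^{-|\alpha|}$. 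Your count (``only one derivative is free, each further one costs $\omega^{-1}$'') fixes the power of $\omega$ but not the power of $\langle x\rangle$. The paper's one-line justification of (i), citing (H3) and \eqref{eq.conda2}, passes over exactly the same term $\langle\xi\rangle^{-1}\sum_j x_j\partial_x^\beta\partial_{\xi_j}a_{2,\eps}$.

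The missing ingredient is decay of all higher derivatives, and it does follow from the hypotheses. Since (H3) holds for every $\eps$ and $\omega(\eps)\to 0$, we have $\partial_k a_{ij,\eps}=(\partial_k\tilde a_{ij})\ast\varphi_{\omega(\eps)}\to\partial_k\tilde a_{ij}$ in $L^1_{\mathrm{loc}}$, hence a.e. along a sequence. Therefore $|\partial_k\tilde a_{ij}|\le\nu\langle x\rangle^{-N}$ a.e. Now, for $\gamma_k\ge 1$ and $e_k$ the $k$-th unit multi-index, write $\partial_x^\gamma a_{ij,\eps}=(\partial_k\tilde a_{ij})\ast\partial^{\gamma-e_k}\varphi_{\omega(\eps)}$. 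Peetre's inequality as in Remark \ref{rem.satshp3}, together with $\omega(\eps)<1$, then gives
\[
|\partial_x^\gamma a_{ij,\eps}(x)|\le C_\gamma\,\nu\,\langle x\rangle^{-N}\,\omega(\eps)^{1-|\gamma|},\qquad |\gamma|\ge 1,\ x\in\R^n,\ \eps\in(0,1].
\]
With this, every factor $\partial_x^{\gamma_l}(\mathcal{A}_\eps\xi)$ in your expansion carries $\langle x\rangle^{-N}$ with $N>1$, so the factor $x$ is absorbed. Your counting of $\omega$-powers then yields (i) for your $q_\eps$, and the same fix repairs the paper's argument.
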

\begin{proof}
For each $\varepsilon \in (0,1]$ let us define (with $\mu>0$ as in \ref{hp2} and $C_1>0$)
\[
q_\varepsilon(x,\xi):=C_1\mu^2\langle \xi \rangle^{-1}\sum_{j=1}^n x_j\partial_{\xi_j}a_{2,\varepsilon}(x,\xi), \quad (x,\xi) \in \R^n \times \R^n,
\]
where recall $a_{2,\eps}$ is defined in \eqref{eq.defaeps}.
With this definition of $q_\varepsilon$, condition $(i)$ is satisfied because of the properties of $a_{2,\varepsilon}$ (see \ref{hp3} and \eqref{eq.conda2}).

Moreover, using the expression \eqref{eq.defPoisson}, we have 
\[
\begin{split}
H_{a_2,\eps}q_\eps(x,\xi)&=
(\langle \nabla_\xi a_{2,\varepsilon},\nabla_x q_\varepsilon \rangle - \langle \nabla_x a_{2,\varepsilon},\nabla_\xi q_\varepsilon \rangle)(x,\xi) \\
&=C_1\mu^2|\nabla_\xi a_{2,\varepsilon}(x,\xi)|^2\langle \xi \rangle ^{-1}+C_1\mu^2\langle \xi \rangle ^{-1}\sum_{j,k=1}^n x_j\partial_{\xi_k}a_{2,\varepsilon}(x,\xi)\partial_{\xi_j}\partial_{x_k}a_{2,\varepsilon}(x,\xi) \, + \\
&\quad \quad -\langle\nabla_x a_{2,\varepsilon}(x,\xi),\nabla_\xi q_{\varepsilon}(x,\xi) \rangle\\
&\geq C_1\mu^2|\nabla_\xi a_{2,\varepsilon}(x,\xi)|^{2}\langle \xi \rangle ^{-1} - C_1\mu^2\langle \xi \rangle ^{-1}\sum_{j,k=1}^n |x_j\partial_{\xi_k}a_{2,\varepsilon}(x,\xi)\partial_{\xi_j}\partial_{x_k}a_{2,\varepsilon}(x,\xi)| \, + \\
&\quad\quad - |\nabla_x a_{2,\varepsilon}(x,\xi)||\nabla_\xi q_{\varepsilon}(x,\xi)|, \quad \forall (x,\xi) \in \R^{2n}.
\end{split}
\]
Hence, since 
\[
|\nabla_\xi a_{2,\varepsilon}(x,\xi)|=2|\mathcal{A}_\varepsilon(x)\cdot \xi|, \quad (x,\xi) \in \R^n \times \R^n,
\]
by \ref{hp2} and \ref{hp3} we obtain
\[
H_{a_{2,\varepsilon}} q_\varepsilon(x,\xi)=\geq 2C_1|\xi|^{2}\langle \xi \rangle ^{-1}  -\nu C'\langle \xi \rangle\geq (2C_1-\nu C'')|\xi|-C_2, \quad \forall (x,\xi)\in \R^n\times \R^n,
\]
for some constants $C',C'',C_2$ depending on $\mu,C_1$ and on the (universal) bounds of the coefficients $a_{ij}$, but not on $\varepsilon$. Therefore, for $\nu>0$ sufficiently small
\begin{equation*}
H_{a_{2,\varepsilon}} q_\varepsilon(x,\xi) \geq C_1 |\xi|-C_2\quad \forall(x,\xi)\in\R^{n}\times \R^n,
\end{equation*}
and then $(ii)$. This concludes the proof of the lemma.
\end{proof}
We now prove the following generalised version of Doi's Lemma (cf. \cite{D}, \cite{FT}).
\begin{Lemma}\label{lemma.Doiepsilon}
 Let $A$ be an operator as in \eqref{eq.A} with coefficients of the form \eqref{eq.coeffA}, and let $(A_\varepsilon)_\varepsilon$ be the corresponding net of regularised operators. Moreover, assume that there exists a net of real-valued functions $(q_\varepsilon)_\varepsilon$, with $q_\varepsilon \in C^\infty(\R^n \times \R^n)$ that satisfies the following statements:
     
     (i) For each $\alpha,\beta \in \mathbb{N}_0^n$ there exists a constant $C_{\alpha,\beta}>0$ such that  
    \begin{equation}\label{eq.qip}
    | \partial_\xi^\alpha\partial_x^\beta q_\varepsilon(x,\xi)|\leq 
   \left\{ \begin{array}{ll}
    C_{\alpha,\beta} \langle x\rangle \langle \xi \rangle^{-|\alpha|},     & \text{if}\,\,|\beta|=0,  \\ 
     C_{\alpha,\beta} \omega(\varepsilon)^{-|\beta|+1} \langle \xi \rangle^{-|\alpha|},    &  \text{if}\,\,|\beta|\geq 1 , 
    \end{array}\right.
\end{equation}
for all $(x,\xi) \in \R^n \times \R^n$.

(ii) There exist $C_1, C_2>0$ such that
\begin{equation}\label{eq.qiip}
    H_{a_{2,\varepsilon}}q_\varepsilon(x,\xi)\geq C_1|\xi|-C_2,
\end{equation}
for all $(x,\xi) \in \R^n \times \R^n$.

Then, there exists a net of symbols $(d_\varepsilon)_\varepsilon$ such that 
for all $\alpha,\beta \in \mathbb{N}_0^n$  
\begin{equation}\label{eq.pSM}
    |\partial_x^\beta\partial_\xi^\alpha d_\varepsilon(x,\xi)| \leq C_{\alpha,\beta}\omega(\varepsilon)^{-|\beta|+1}\langle \xi \rangle^{-|\alpha|}, \quad \forall (x,\xi) \in \R^n\times \R^n,\, \forall \varepsilon \in (0,1]
\end{equation}
for some $C_{\alpha,\beta}>0$ and  
\begin{equation}\label{eq.pDoi}
    H_{a_{2,\varepsilon}}d_\varepsilon(x,\xi)\geq \langle x \rangle^{-N}|\xi|-C,\quad \forall (x,\xi)  \in \R^n \times \R^n,\, \forall \varepsilon \in (0,1],
\end{equation}
for some $C>0$ independent of $\varepsilon$ and $N\in \mathbb{N}$, $N>1$.
\end{Lemma}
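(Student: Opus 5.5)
Following Doi's construction as in \cite{D} and \cite{FT}, I would take
\[
d_\varepsilon(x,\xi):=\lambda\, \psi\bigl(q_\varepsilon(x,\xi)\bigr),
\qquad \psi(r):=\int_0^r \langle \tau\rangle^{-N}\,d\tau ,
\]
with $\lambda>0$ a constant to be fixed and $N>1$. Since $N>1$, $\psi$ is smooth, bounded and real, and $\psi'(r)=\langle r\rangle^{-N}$. The key fact is the chain rule $H_{a_{2,\varepsilon}}d_\varepsilon=\lambda\,\psi'(q_\varepsilon)\,H_{a_{2,\varepsilon}}q_\varepsilon$.

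\emph{Step 1: symbol estimates \eqref{eq.pSM}.} For $\alpha=\beta=0$ the bound $|d_\varepsilon|\le \lambda\|\psi\|_{L^\infty}$ is immediate. For higher derivatives I use Faà di Bruno:
\[
\partial_x^\beta\partial_\xi^\alpha d_\varepsilon=\lambda\sum \psi^{(k)}(q_\varepsilon)\prod_{l=1}^k \partial_x^{\beta_l}\partial_\xi^{\alpha_l}q_\varepsilon ,
\]
where each $\psi^{(k)}$ with $k\ge1$ is bounded. In every product the factors with $\beta_l\neq0$ contribute, by \eqref{eq.qip}, $\omega(\varepsilon)^{-|\beta_l|+1}\langle\xi\rangle^{-|\alpha_l|}$. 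The total $\omega$-power is then $\omega(\varepsilon)^{-|\beta|+\#\{l:\beta_l\ne0\}}$, which is at most $\omega(\varepsilon)^{-|\beta|+1}$ when $\beta\neq0$, since $\omega<1$. The factors with $\beta_l=0$ bring a growth $\langle x\rangle\langle\xi\rangle^{-|\alpha_l|}$, which the bounded $\psi^{(k)}$ alone cannot absorb.

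This is the main technical obstacle. I handle it with the sharper bound $|\psi^{(k)}(r)|\le C_k\langle r\rangle^{-N-k+1}$. That bound makes the products bounded in the regime where $|q_\varepsilon|\gtrsim\langle x\rangle$, but not in general, because $q_\varepsilon$ can be small while $x$ is large. The remedy, as in \cite{FT}, is to build the weight into the composition and take instead $d_\varepsilon=\lambda\,\psi\bigl(q_\varepsilon/\langle x\rangle\bigr)$. By \eqref{eq.qip}, $q_\varepsilon/\langle x\rangle$ satisfies the same estimates as $q_\varepsilon$ for $|\beta|\ge1$ (Leibniz rule, derivatives of $\langle x\rangle^{-1}$ bounded), and it is bounded for $\beta=0$. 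Faà di Bruno then gives \eqref{eq.pSM} directly. When $\beta=0$, the $\omega$ factor $\omega^{1}\le1$ is harmless, consistent with the statement.

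\emph{Step 2: the Doi inequality \eqref{eq.pDoi}.} I write $\tilde q_\varepsilon=q_\varepsilon/\langle x\rangle$. Since $\nabla_x\langle x\rangle^{-1}=O(\langle x\rangle^{-2})$, I get
\[
H_{a_{2,\varepsilon}}\tilde q_\varepsilon=\langle x\rangle^{-1}H_{a_{2,\varepsilon}}q_\varepsilon+q_\varepsilon\,\langle\nabla_\xi a_{2,\varepsilon},\nabla_x\langle x\rangle^{-1}\rangle .
\]
The second term is bounded by $C\langle x\rangle\cdot|\xi|\cdot\langle x\rangle^{-2}$, which is of the same order as the first. So this route needs a more careful constant balance, or a different choice of normalisation.

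I would therefore first try the simplest option, keeping $d_\varepsilon=\lambda\psi(q_\varepsilon)$ and checking whether the $\beta=0$ factors can be tolerated. For the lower bound, \eqref{eq.qiip} gives
\[
H_{a_{2,\varepsilon}}d_\varepsilon\ge \lambda\langle q_\varepsilon\rangle^{-N}(C_1|\xi|-C_2)\ge \lambda C_1 \langle q_\varepsilon\rangle^{-N}|\xi|-\lambda C_2 .
\]
Using $|q_\varepsilon|\le C\langle x\rangle$ from \eqref{eq.qip}, I have $\langle q_\varepsilon\rangle^{-N}\ge C^{-N}\langle x\rangle^{-N}$. Choosing $\lambda=C^N/C_1$ then yields $\langle x\rangle^{-N}|\xi|-C$ with $C$ independent of $\varepsilon$. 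For the symbol bounds I then rely on the decay $\langle q_\varepsilon\rangle^{-N-k+1}$ together with $|q_\varepsilon|\le C\langle x\rangle$.

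Reconciling Step 1, which is the polynomial $\langle x\rangle$ growth of $x$-undifferentiated factors, with the $\varepsilon$-uniformity of Step 2 is the point I expect to require the most care.
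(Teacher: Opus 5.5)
\textbf{There is a genuine gap.} Neither of your two candidates satisfies both conclusions. The one you finally settle on, $d_\varepsilon=\lambda\psi(q_\varepsilon)$, fails \eqref{eq.pSM}.

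Your chain-rule proof of \eqref{eq.pDoi} for $\lambda\psi(q_\varepsilon)$ is fine. For the symbol bounds, however, the inequality $|q_\varepsilon|\le C\langle x\rangle$ points the wrong way. To let the decay of $\psi^{(k)}(q_\varepsilon)$ absorb the factors $\partial_\xi^{\alpha_l}q_\varepsilon=O(\langle x\rangle\langle\xi\rangle^{-|\alpha_l|})$, you need a \emph{lower} bound $|q_\varepsilon|\gtrsim\langle x\rangle$. Concretely, $\partial_{\xi_j}d_\varepsilon=\lambda\langle q_\varepsilon\rangle^{-N}\partial_{\xi_j}q_\varepsilon$ reduces to $\lambda\partial_{\xi_j}q_\varepsilon$ on $\{q_\varepsilon=0\}$, and this is unbounded in $x$. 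Already for constant coefficients, the $q_\varepsilon$ of Proposition \ref{prop.qepsilon} is $c\langle\xi\rangle^{-1}\langle x,\mathcal{C}\xi\rangle$. It vanishes on the hyperplane $\langle x,\mathcal{C}\xi\rangle=0$, where $\nabla_\xi q_\varepsilon=c\langle\xi\rangle^{-1}\mathcal{C}x$.

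Your other candidate, $\lambda\psi(q_\varepsilon/\langle x\rangle)$, is a good symbol. But, as you noticed, the term $-\frac{q_\varepsilon}{\langle x\rangle}\frac{\langle x,\nabla_\xi a_{2,\varepsilon}\rangle}{\langle x\rangle^{2}}$ in $H_{a_{2,\varepsilon}}(q_\varepsilon/\langle x\rangle)$ can be as large as $2K\mu|\xi|/\langle x\rangle$ (with $|q_\varepsilon|\le K\langle x\rangle$). So it is not dominated by $C_1|\xi|/\langle x\rangle$.

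\textbf{The missing idea} is Doi's localisation in the variable $q_\varepsilon/\langle x\rangle$. This is what the paper does, and it uses essentially each of your candidates exactly where it works.

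Fix $\delta>0$ small and cut-offs $\psi_{0,\varepsilon}=\phi_0(q_\varepsilon/\langle x\rangle)$ and $\psi_{\pm,\varepsilon}=\phi_\pm(q_\varepsilon/\langle x\rangle)$, supported in $\{|q_\varepsilon|\le2\delta\langle x\rangle\}$ and $\{\pm q_\varepsilon\ge\delta\langle x\rangle\}$ respectively.

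\emph{Inner region.} On the first set the bad term is at most $4\delta\mu|\xi|/\langle x\rangle$. So for $\delta$ small, $H_{a_{2,\varepsilon}}(q_\varepsilon/\langle x\rangle)\ge C_1'|\xi|/\langle x\rangle-C_2'$ there.

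\emph{Outer region.} On the second set one uses $f(|q_\varepsilon|)$ with $f\ge0$, $f'(t)\ge c\langle t\rangle^{-N}$ (hence $f'(|q_\varepsilon|)\gtrsim\langle x\rangle^{-N}$) and $|f^{(m)}(t)|\le C_m(1+t)^{-m}$. Your $\psi$ qualifies. Since now $|q_\varepsilon|\ge\delta\langle x\rangle$, each power of $(1+|q_\varepsilon|)^{-1}$ pays for one undifferentiated factor, and your Fa\`a di Bruno and $\omega$-counting give \eqref{eq.pSM}.

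\emph{Gluing.} The pieces are combined as
$d_\varepsilon=\frac{q_\varepsilon}{\langle x\rangle}\psi_{0,\varepsilon}+(f(|q_\varepsilon|)+2\delta)(\psi_{+,\varepsilon}-\psi_{-,\varepsilon})$.
The shift $2\delta$ makes the gluing term
$\bigl(f(|q_\varepsilon|)+2\delta-|q_\varepsilon|/\langle x\rangle\bigr)(\phi_+'-\phi_-')(q_\varepsilon/\langle x\rangle)\,H_{a_{2,\varepsilon}}(q_\varepsilon/\langle x\rangle)$
a bounded nonnegative weight times a quantity $\ge -C_2'$, hence $\ge -C_3$. Summing up gives
$H_{a_{2,\varepsilon}}d_\varepsilon\ge C\bigl(\langle x\rangle^{-1}\psi_{0,\varepsilon}+\langle x\rangle^{-N}(\psi_{+,\varepsilon}+\psi_{-,\varepsilon})\bigr)|\xi|-C'\ge C\langle x\rangle^{-N}|\xi|-C'$,
uniformly in $\varepsilon$. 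Rescaling $d_\varepsilon$ then yields \eqref{eq.pDoi}.

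\textbf{On the case $\beta=0$.} Your remark that $\omega(\varepsilon)^{1}\le1$ makes this case harmless is backwards: the factor makes the required bound stronger, not weaker. Constructions of this type, the paper's included, give only an $\varepsilon$-uniform $O(1)$ bound when $\beta=0$, and \eqref{eq.pSM} must be read that way there. In fact a bound $|d_\varepsilon|\le C\omega(\varepsilon)$ is incompatible with \eqref{eq.pDoi} as $\varepsilon\to0$. To see this, integrate $H_{a_{2,\varepsilon}}d_\varepsilon$ along a bicharacteristic through $x=0$ for a time of order $|\xi|^{-1}$.
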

\begin{proof}
In the first place, we note that, by hypothesis \eqref{eq.qip}, there exists a fixed constant $K>0$ such that, for each $\varepsilon \in (0,1]$ one has
\begin{equation}\label{eq.estqepsilonK}
|q_\varepsilon(x,\xi)| \leq K \langle x \rangle, \quad \forall (x,\xi) \in \R^n \times \R^n.
\end{equation}
Let us define $\lambda(t) := \langle t \rangle^{-N}$ for $t \geq 0$, and extend it to $t < 0$ by setting $\lambda(t) := \lambda(0)$ if $t<0$. By Lemma 3.1 in \cite{D}  there exists a nonnegative real-valued function $f \in C^\infty([0,+\infty))$ such that (cf. \cite{FT})
\begin{equation}\label{eq.fdoi1}
f'(t) \geq \lambda(K^{-1}t - 10), \quad \forall t \geq 0,
\end{equation}
and
\begin{equation}\label{eq.foi2}
|f^{(m)}(t)| \leq C_m \left( \lambda(0) + \int_0^t \lambda(s)ds \right) (1 + t)^{-m}, \quad \forall t \geq 0,
\end{equation}
for some constants $C_m > 0$.
Therefore, by \eqref{eq.estqepsilonK} and \eqref{eq.fdoi1} we have 
\begin{equation}\label{eq.fder1}
f'(|q_\varepsilon(x,\xi)|) \geq \lambda(K^{-1}|q_\varepsilon(x,\xi)|-10) \geq \lambda (\langle x \rangle -10) \geq \lambda(|x|)=\langle x \rangle^{-N}, \quad \forall (x,\xi) \in \R^n\times \R^n.
\end{equation}
Now, we take $\delta>0$ sufficiently small to be fixed later, and $\phi \in C^\infty(\R)$ such that $\phi(t)=0$, if $t\leq 1$, $\phi(t)=1$ if $t \geq 2$, and $\phi'(t) \geq 0$ on $\R$. We set $\phi_+(t):=\phi(t/\delta)$, $\phi_-(t):=\phi(-t/\delta)$ and $\phi_0:=1-\phi_+-\phi_-$. For $\eps \in (0,1]$ we define $\psi_{+,\varepsilon},\psi_{-,\varepsilon},\psi_{0,\varepsilon} \in S^0$ as
\begin{equation*}
\psi_{0,\varepsilon}(x,\xi):=\phi_0(q_\varepsilon(x,\xi)/\langle x \rangle), \quad \psi_{+,\varepsilon}(x,\xi):=\phi_+(q_\varepsilon(x,\xi)/\langle x \rangle), \quad \psi_{-,\varepsilon}(x,\xi):=\phi_{-}(q_\varepsilon(x,\xi)/\langle x \rangle). 
\end{equation*}
By \eqref{eq.fdoi1}, for all $\alpha,\beta \in \mathbb{N}_0^n$ there exists a constant $C_{\alpha,\beta}>0$ such that
\begin{equation*}
|\partial_x^\beta \partial_\xi^{\alpha}f(|q_\varepsilon(x,\xi)|)|\leq C_{\alpha,\beta}\omega(\varepsilon)^{-|\beta|+1}\langle \xi \rangle^{-|\alpha|}
\quad \text{on} \,\,\, \mathrm{supp} \, \psi_{+,\varepsilon} \cup \mathrm{supp} \, \psi_{-,\varepsilon}.
\end{equation*} 

\noindent Next, for $\varepsilon \in (0,1]$, we define $d_\varepsilon \in S^0$ as
\begin{equation}\label{Hp}
d_\varepsilon(x,\xi):=\frac{q_\varepsilon(x,\xi)}{\langle x \rangle}\psi_{0,\varepsilon}(x,\xi)+(f(|q_\varepsilon(x,\xi)|)+2\delta)(\psi_{+,\varepsilon}(x,\xi)-\psi_{-,\varepsilon}(x,\xi)), \quad (x,\xi) \in \R^n \times \R^n.
\end{equation}
By this definition of $d_\eps$ we get that \eqref{eq.pSM} is satisfied due to the property of $q_\eps$. Hence, the goal is to prove \eqref{eq.pDoi}. 

In the first place, we note that 
by hypotheses (i) and (ii) on $(q_\varepsilon)_\varepsilon$, by choosing $\delta>0$ sufficiently small, for all $\varepsilon \in (0,1]$ we have 
\begin{equation*}
H_{a_{2,\varepsilon}}\Bigl(\frac{q_\varepsilon}{\langle x \rangle}\Bigr)=\frac{H_{a_{2,\varepsilon}}q_\varepsilon}{\langle x \rangle}-\frac{q_\varepsilon}{\langle x \rangle}\frac{x \cdot \nabla_\xi a_{2,\varepsilon}}{\langle x \rangle^2}\geq C_1' \frac {|\xi|}{\langle x \rangle}-C_2' \quad \text{on}\,\, \mathrm{supp} \, \psi_{0,\varepsilon},
\end{equation*}
 for some universal constants $C_1',C_2'>0$, independent of $\varepsilon$. 
Hence, by using the properties of $f$ and the inclusions  $\mathrm{supp} \, \phi'_+, \mathrm{supp} \, \phi'_- \subseteq \mathrm{supp}\,\phi_0$, for all $\varepsilon \in (0,1]$ we get
\begin{align}
H_{a_{2,\varepsilon}}d_\varepsilon&=H_{a_{2,\varepsilon}}\Bigl(\frac{q_\varepsilon}{\langle x \rangle}\Bigr)\psi_{0,\varepsilon}+  f'(\abs{q_\varepsilon})(H_{a_{2,\varepsilon}}q_\varepsilon)(\psi_{+,\varepsilon} +\psi_{-,\varepsilon})\\
&\quad +\Bigl(f(\abs{q_\varepsilon})+2\delta-\frac{\abs{q_\varepsilon}}{\langle x \rangle}\Bigr)\Bigl(\phi_+'\Bigl(\frac{q_\varepsilon}{\langle x \rangle}\Bigr)-\phi_-'\Bigl(\frac{q_\varepsilon
    }{\langle x \rangle}\Bigr)\Bigr)H_{a_{2,\varepsilon}}\Bigl(\frac{q_\varepsilon}{\langle x \rangle}\Bigr) \nonumber\\
    &\geq H_{a_{2,\varepsilon}}\Bigl(\frac{q_\varepsilon}{\langle x \rangle}\Bigr)\psi_{0,\varepsilon}+  f'(\abs{q_\varepsilon})(H_{a_{2,\varepsilon}}q_\varepsilon)(\psi_{+,\varepsilon} +\psi_{-,\varepsilon}) -C_3,\label{eq.Hap}
    \end{align}
for some $C_3>0$.

 To estimate the second term on the RHS of \eqref{eq.Hap} we use hypothesis (ii) and \eqref{eq.fder1}. 
 Putting everything together we obtain 
\begin{equation*}
\begin{split}
H_{a_{2,\varepsilon}}d_\varepsilon(x,\xi)&\geq C_4\Big(\langle x \rangle^{-1} \psi_{0,\varepsilon}+\langle x\rangle^{-N}(\psi_{+,\varepsilon}+\psi_{-,\varepsilon})\Big)\abs{\xi}-C_5, \\
& \geq C_4 \langle x \rangle^{-N}(\psi_{0,\varepsilon}+\psi_{+,\varepsilon}+\psi_{-,\varepsilon})\abs{\xi}-C_5\\
& =C_4\langle x \rangle^{-N}|\xi|-C_5, \quad \forall (x,\xi) \in \R^n \times \R^n,
\end{split}
\end{equation*}
for some new constants $C_4,C_5>0$.
In conclusion, possibly rescaling the symbol $d_\eps$, we showed that there exists a constant $C>0$ such that for all $\varepsilon \in (0,1]$
\begin{equation*}
H_{a_{2,\varepsilon}}d_\varepsilon\geq \langle x \rangle^{-N}\abs{\xi}-C, \quad \forall (x, \xi) \in \R^n\times \R^n.
\end{equation*}
\end{proof}
\begin{Rem}
    Let us point out that, as a consequence of Proposition \ref{prop.qepsilon}, we obtain that, if \ref{hp1}, \ref{hp2}, \ref{hp3} hold, there exists a net  $(d_\eps)_\varepsilon$of symbols of order $0$ such that \eqref{eq.pSM} and \eqref{eq.pDoi} are satisfied.
\end{Rem}
\begin{Rem}\label{rmk.C1}
    Note also that, possibly by rescaling the symbol $d_\eps$ with a universal constant (the same for each $\varepsilon \in (0,1]$), instead of \eqref{eq.pDoi} we obtain
\begin{equation}\label{eq.pDoi2}
    H_{a_{2,\varepsilon}}d_\eps \geq C_1 \langle x \rangle^{-N} |\xi| - C', 
    \quad \forall (x, \xi) \in \mathbb{R}^n \times \mathbb{R}^n,
\end{equation}
where $C_1, C' > 0$ are universal constants (independent of $\varepsilon$) and $C_1$ can be taken arbitrarily large.
\end{Rem}
\section{Statement of the problem and regularised version}\label{sec.stateandreg}
The aim of this section is to introduce the proper framework to study the problem \eqref{mainprobIntro}, namely the Schr\"odinger ultrahyperbolic equations induced by an ultrahyperbolic operator  $A$ defined as in \eqref{eq.A}. Subsequently, we study a regularised version of problem \eqref{mainprobIntro} and we prove Theorem \ref{theo.apriori} which will be fundamental for obtaining the existence and the uniqueness results.

\subsection{Statement of the problem}\label{subsec.sop}
We focus on \textit{singular} operators, that can be formally written as 
\begin{equation}\label{def.P}
  P=D_t-\sum_{i,j=1}^nD_{x_i}(a_{ij}(x)D_{x_j})- \sum_{k=1}^nb_k(x)D_{x_k}-V(x),
\end{equation}
where, in general, $a_{ij},b_k,V \in \mathscr{D}'(\R^n)$, for all $i,j,k=1,\dots,n$. 
As discussed in the previous section we assume that the net of regularised operators $(A_\varepsilon)_\varepsilon$, associated with the principal part of $P$, defined by 
\[
A_\varepsilon=\sum_{i,j=1}^nD_{x_i}(a_{ij,\varepsilon}(x)D_{x_j}), \quad \varepsilon \in (0,1], 
\]
with $a_{ij,\eps}$ as in \eqref{eq.regcoeff}, satisfies \ref{hp1}, \ref{hp2} and \ref{hp3}.
Furthermore, denoting by $(\varphi_{\omega(\varepsilon)})_{\varepsilon \in (0,1]}$ the family of mollifiers defined as in \eqref{eq.defphiepsilon}, we define 
\[
b_{k,\eps}:=b_k \ast \varphi_{\omega(\eps)}, \ \ k=1,\dots,n, \quad \quad V_\eps:=V \ast \varphi_{\omega(\eps)},
\]
and we work under the following additional hypotheses on the "first-order" and "zero-order" parts of the operator $P$. We assume:
\begin{hp}\label{hp4}
There exists a universal constant $c_0>0$ such that, for each $\varepsilon \in (0,1]$ (with $N>1$ as in \ref{hp3} and $b_{k,\varepsilon}=b_k \ast \varphi_{\omega(\varepsilon)}$),
\begin{equation}
|\mathrm{Im}(b_{k,\varepsilon})(x)| \leq c_0 \langle x \rangle^{-N}, \quad \forall x \in \R^n, \ \forall k = 1, \dots, n,
\end{equation}
and there exists $N_1 \in \mathbb{N}$, such that for each $\beta \in \mathbb{N}_0^n$ and for each $\eps \in (0,1]$
\begin{equation}
\label{eq.bkeps}
    |\partial_x^\beta b_{k,\eps}(x)|\leq C_\beta\omega(\eps)^{-|\beta|-N_1}, \quad \forall x \in \R^n, \ \forall k = 1, \dots, n,
\end{equation}
for some constant $C_\beta>0$.
\end{hp}
\begin{hp}\label{hp5}
There exists $N_2 \in \mathbb{N}$, such that for each $\gamma \in \mathbb{N}_0^n$ and for each $\varepsilon \in (0,1]$ (with $V_\varepsilon=V \ast \varphi_{\omega(\varepsilon)}$),
\begin{equation}\label{eq.Veps}
|\partial_x^\gamma V_\varepsilon(x)| \leq C_\gamma \omega(\eps)^{-|\gamma|-N_2}, \quad \forall x \in \R^n,
\end{equation}
for some constant $C_\gamma>0$
\end{hp}

We denote by $(P_\varepsilon)_\varepsilon$ the family of operators defined by 
\begin{equation}\label{eq.Peps}
\begin{split}
P_\varepsilon&=D_t-A_\varepsilon-B_\varepsilon-V_\varepsilon\\
&:=D_t-\sum_{i,j=1}^nD_{x_i}(a_{ij,\varepsilon}(x)D_{x_j})  -\sum_{k=1}^nb_{k,\varepsilon}(x)D_{x_k}-V_\varepsilon(x), \quad \varepsilon \in (0,1].
\end{split}
\end{equation}
Note that $B_\eps=\mathrm{Op}(b_\eps)$, where for each $\eps \in (0,1]$, with the same notation as above, one has
\begin{equation}\label{eq.beps}
|\partial_x^\beta \partial_\xi^\alpha b_\eps(x,\xi)|\leq C_{\alpha \beta}\omega(\eps)^{-|\beta|}\langle \xi \rangle^{1-|\alpha|}, \quad \forall \alpha,\beta \in \mathbb{N}_0^n, \ (x,\xi) \in \R^n \times \R^n.
\end{equation}
Henceforth, we say that $(P_\eps)_\eps$ satisfies condition \ref{hp1}, \ref{hp2}, \ref{hp3}, \ref{hp4}, \ref{hp5}, which means that the nets of coefficients of $(P_\eps)_\eps$ we consider satisfy such conditions.

Before discussing in detail the notion of weak solution for problems of the form \eqref{mainprobIntro} (i.e. the Cauchy problem induced by the operator $P$ defined in \eqref{def.P}), we prove the following theorem, which provides \textit{sufficient} conditions for the singular operator $P$ to generate a net of operators satisfying conditions \ref{hp1}-\ref{hp5}.

\begin{Th}
\label{theo_with_sc}
Let $P$ be an operator of the form 
\[
  P=D_t-\sum_{i,j=1}^nD_{x_i}(a_{ij}(x)D_{x_j})- \sum_{k=1}^nb_k(x)D_{x_k}-V(x),
\]
and assume that the coefficients fulfill the following hypotheses.
\begin{enumerate}[label=(\roman*)]
    \item The matrix $\mathcal{A}(x)=(a_{ij}(x))_{i,j=1}^n$ is real and symmetric, for all $ x \in \R^n$ and its entries can be decomposed as
    \[
    a_{ij}(x)=c_{ij}+\tilde{a}_{ij}(x), \quad  i,j=1,\dots,n,
    \]
    where $c_{ij} \in \R$ and $\tilde{a}_{ij} \in W^{1,\infty}(\R^n)$, for all $i,j=1,\dots,n$. Moreover, we assume that the matrix  $\mathcal{C}=(c_{ij})_{i,j=1}^n$ is non-degenerate, namely there exists $\mu'>0$ such that  
    \[
    {\mu'}^{-1}|\xi|\leq |\mathcal{C} \cdot \xi |\leq \mu' |\xi|, \quad \forall \xi \in \R^n.
    \]
    Finally, the perturbations are assumed to have the form  
    \begin{equation}\label{eq.smalltildeaij}
    \tilde{a}_{ij}(x)=\langle x \rangle^{-N}\tilde{a}_{ij}'(x),\quad i,j=1,\dots,n,
    \end{equation}
    where $N \in \mathbb{N}$, $N>1$ and $\tilde{a}_{ij}' \in W^{1,\infty}(\R^n)$, such that $\|\tilde{a}_{ij}'\|_{W^{1,\infty}}\leq \nu'$, for some $\nu'>0$ sufficiently small, for all $i,j=1,\dots,n$.
    \item For all $k=1,\dots,n$, we assume $\mathrm{Re}(b_k) \in \mathscr{E}'(\R^n)$
    and (with $N>1$ as above)
    \[
    \mathrm{Im}(b_k)=\langle x \rangle^{-N}b_k',
    \]
    where $\|b_{k}'\|_{L^\infty} \leq c_0'$, for some universal constant $c_0'>0$.
    \item $V \in \mathscr{E}'(\R^n)$. 
\end{enumerate}
Then, the corresponding regularised operators $(P_\eps)_\eps$ defined as in Section \ref{sec.stateandreg}, fulfills \ref{hp1}-\ref{hp5}.
\end{Th}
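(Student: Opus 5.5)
We check the five hypotheses one at a time, using only convolution estimates (Young's inequality, Peetre's inequality, and Proposition \ref{Prop.v}) together with the structure of distributions with compact support.

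\emph{Hypotheses \ref{hp1} and \ref{hp3}.} For \ref{hp1}, note that $\varphi$ is real and the convolution is linear. Hence $a_{ij,\eps}=c_{ij}+\tilde a_{ij}\ast\varphi_{\omega(\eps)}$ is real, and $a_{ij,\eps}=a_{ji,\eps}$ follows from $a_{ij}=a_{ji}$. For \ref{hp3}, we repeat the computation \eqref{eq.peetre} of Remark \ref{rem.satshp3} with $b_{ij}=\tilde a'_{ij}$. This gives $|\partial_{x_k}a_{ij,\eps}(x)|\le C_{N,\varphi}\nu'\langle x\rangle^{-N}$ uniformly in $\eps$, so \ref{hp3} holds with $\nu=C_{N,\varphi}\nu'$. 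This $\nu$ is small once $\nu'$ is.

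\emph{Hypothesis \ref{hp2}.} This is the step needing the most care. Write $\mathcal A_\eps(x)\xi=\mathcal C\xi+\tilde{\mathcal A}_\eps(x)\xi$. We have
\[
|\tilde a_{ij,\eps}(x)|\le \|\tilde a_{ij}\|_{L^\infty}\|\varphi\|_{L^1}\le \nu',
\]
since $|\tilde a_{ij}|\le\langle x\rangle^{-N}\|\tilde a'_{ij}\|_{L^\infty}\le \nu'$ and $\varphi\ge0$ with $\int\varphi=1$. It follows that $|\tilde{\mathcal A}_\eps(x)\xi|\le n\nu'|\xi|$. The triangle inequality then gives
\[
(\mu'^{-1}-n\nu')|\xi|\le|\mathcal A_\eps(x)\xi|\le(\mu'+n\nu')|\xi|.
\]
Taking $\nu'<(2n\mu')^{-1}$, \ref{hp2} holds with $\mu=\max\{2\mu',\mu'+n\nu'\}$. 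The point is that the smallness of $\nu'$ must be fixed compatibly with both \ref{hp2} and \ref{hp3}. This is possible because both requirements are just upper bounds on $\nu'$.

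\emph{Hypothesis \ref{hp4}.} First consider the imaginary part. Since $\varphi$ is real, $\mathrm{Im}(b_{k,\eps})=(\langle\cdot\rangle^{-N}b'_k)\ast\varphi_{\omega(\eps)}$. Peetre's inequality, as in \eqref{eq.peetre}, gives $|\mathrm{Im}\,b_{k,\eps}(x)|\le C_N c_0'\langle x\rangle^{-N}\int\langle y\rangle^N\varphi_{\omega(\eps)}(y)\,dy\le c_0\langle x\rangle^{-N}$ uniformly in $\eps$, because $\omega(\eps)<1$. Derivatives of the imaginary part are handled by \eqref{eq.v}: $|\partial^\beta(\mathrm{Im}\,b_k\ast\varphi_{\omega(\eps)})|\le C\omega(\eps)^{-|\beta|}$.

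For the real part we use the structure theorem for distributions with compact support: $\mathrm{Re}(b_k)=\sum_{|\gamma|\le m}\partial^\gamma g_\gamma$ with $g_\gamma$ continuous and compactly supported, hence bounded. Then
\[
\partial^\beta(\mathrm{Re}\,b_k\ast\varphi_{\omega(\eps)})=\sum_{|\gamma|\le m} g_\gamma\ast\partial^{\beta+\gamma}\varphi_{\omega(\eps)},
\]
and Young's inequality bounds each term by $\|g_\gamma\|_{L^\infty}\|\partial^{\beta+\gamma}\varphi\|_{L^1}\,\omega(\eps)^{-|\beta|-|\gamma|}$. This yields \eqref{eq.bkeps} with $N_1=m$ (taking $N_1\ge1$), after using $\omega(\eps)<1$ to absorb lower powers.

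\emph{Hypothesis \ref{hp5}.} The identical structure-theorem argument applied to $V\in\mathscr E'(\R^n)$ gives \eqref{eq.Veps}, with $N_2$ equal to the order of $V$.
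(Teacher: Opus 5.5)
Your proof is correct and follows the paper's route: \ref{hp1} from the realness of $\varphi$, \ref{hp2} from the triangle inequality and smallness of $\nu'$, \ref{hp3} and the imaginary-part bound in \ref{hp4} from Peetre's inequality as in Remark \ref{rem.satshp3}, and the remaining derivative bounds from the estimate for $\mathscr{E}'$-distributions convolved with $\varphi_{\omega(\eps)}$. The paper cites that estimate from \cite{AACG}, while you derive it from the structure theorem; your splitting of $b_k$ into real and imaginary parts for \eqref{eq.bkeps} is more careful than the paper, which applies the $\mathscr{E}'$ estimate to $b_k$ itself although only $\mathrm{Re}(b_k)$ is compactly supported.

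One small slip: $N_2$ should be the number $m$ of derivatives in the structure representation of $V$, not the order of $V$. For example, $V=\delta$ has order $0$, but $V\ast\varphi_{\omega(\eps)}=\varphi_{\omega(\eps)}$ is only $O(\omega(\eps)^{-n})$.
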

\begin{proof}
 The hypothesis \ref{hp1} easily follows from the fact that $\mathcal{A}(x)$ is real and symmetric (for all $x$) and from the definition of the mollifier (cf. \eqref{eq.defphiepsilon}).
 
 \noindent We now prove \ref{hp2}.
 Since 
 \[
 a_{ij}=c_{ij}+\tilde{a}_{ij}, \quad i,j=1,\dots,n,
 \]
 denoting by $\tilde{\mathcal{A}}_\eps$ the matrix with entries $\tilde{a}_{ij,\eps}=\tilde{a}_{ij}\ast \varphi_{\omega(\eps)}$, we have 
 \begin{equation}\label{eq.Aepssc}
|\mathcal{A}_\varepsilon(x)\cdot \xi|=|(\mathcal{C}+\tilde{\mathcal{A}}_\eps(x))\cdot \xi| \geq |\mathcal{C} \cdot \xi |-|\tilde{\mathcal{A}}_\eps(x) \cdot \xi|\geq {\mu'}^{-1} |\xi|-|\tilde{\mathcal{A}}_\eps(x) \cdot \xi|.
 \end{equation}
Next, note that
\[
|\tilde{\mathcal{A}}_\eps(x) \cdot \xi|^2=\sum_{i=1}^n\Bigl(\sum_{j=1}^n\tilde{a}_{ij,\eps}(x)\xi_j\Bigr)^2\leq C \sum_{i=1}^n\sum_{j=1}^n\tilde{a}_{ij,\eps}^2(x)\xi_j^2\leq C \max_{i,j}\|\tilde{a}_{ij,\eps}\|^2_{L^\infty}|\xi|^2,
\]
for some $C=C(n)>0$, and that, by \eqref{eq.smalltildeaij},
\[
\|\tilde{a}_{ij,\eps}\|_{L^\infty} \leq \|\varphi_{\omega(\eps)}\|_{L^1} \|\tilde{a}_{ij}\|_{L^\infty} \leq \nu',\quad \forall i,j=1,\dots,n. 
\]
Therefore, choosing $\nu'>0$ small enough, from \eqref{eq.Aepssc}, we obtain
\[
|\mathcal{A}_\eps(x)\cdot \xi|\geq {\mu'}^{-1} |\xi|-C\nu' |\xi|\geq \mu_1 |\xi|, \quad \forall \xi \in \R^n,
\]
for some $\mu_1>0$ independent of $\eps$. By repeating the same argument we also have 
\[
|\mathcal{A}_\eps(x)\cdot \xi |\leq |\mathcal{C} \cdot \xi|+|\tilde{\mathcal{A}}_\eps \cdot \xi|\leq \mu' |\xi|+C\nu' |\xi|\leq \mu_2|\xi|, \quad \forall \xi \in \R^n, 
\]
for some $\mu_2>0$ independent of $\eps$. Hence \ref{hp2} holds. 

\noindent The hypothesis \ref{hp3} follows directly from Remark \ref{rem.satshp3}. As for \ref{hp4} and \ref{hp5} we have that, for all $k=1,\dots, n$, 
\[
\begin{split}
|\mathrm{Im}(b_k) \ast \varphi_{\omega(\eps)}(x)|&\leq \int_{\R^n}|\langle x-y \rangle^{-N}b_k'(x-y)\varphi_{\omega(\eps)}(y)|dy\\
&\leq \langle x \rangle^{-N} \int |b_k'(x-y)\langle y \rangle^N \varphi_{\omega(\eps)}(y)|dy\\
&\leq C\langle x \rangle^{-N}\|b_{k}'\|_{L^\infty}\\
&\leq c_0 \langle x \rangle^{-N}, \quad \forall x \in \R^n,
\end{split}
\]
where the constants $C,c_0>0$ depend on the mollifier but are independent of $\eps$. Finally, for all $k=1,\dots,n$, since $b_k \in \mathscr{E}'(\R^n)$, by Proposition 2.1 in \cite{AACG}, we get
\[
|\bigl(\partial_x^\alpha (b_k \ast \varphi_{\omega(\eps)})\bigr)(x)|=|\bigl(b_k \ast \partial_x^\alpha \varphi_{\omega(\eps)}\bigr)(x)|\leq C_{\alpha}\omega(\eps)^{-|\alpha|-N_1}, \quad \forall x \in \R^n,
\]
and analogously 
 \[
|\bigl(\partial_x^\alpha (V \ast \varphi_{\omega(\eps)})\bigr)(x)|=|\bigl(V \ast \partial_x^\alpha \varphi_{\omega(\eps)}\bigr)(x)|\leq C_{\alpha}'\omega(\eps)^{-|\alpha|-N_2}, \quad \forall x \in \R^n, 
\]
for all $\alpha \in \mathbb{N}^n$ and for $C_\alpha,C_\alpha'>0$, $N_1=N_1(n),N_2=N_2(n)$, independent of $\eps$. Therefore \ref{hp4} and \ref{hp5} hold and the proof ends.
\end{proof}
\begin{Rem}
    Note that, in Theorem \ref{theo_with_sc}, it would be possible to increase the magnitude of the perturbations $\tilde{a}_{ij}$ and require an $L^\infty$-bound (depending on the size of the constant coefficients $c_{ij}$) given by a larger constant than the $\nu$ used for the derivatives (cf. Remark \ref{rmk.diag}). However, to keep the presentation as clear as possible, we have chosen to require the coefficients and their derivatives to be bounded by the same constant in the $L^\infty$-norm.
\end{Rem}

Our goal now is to find an $H^\infty$-weak solution of problem \eqref{mainprobIntro}, which is based on the notion of moderate net of functions as stated in Subsection \ref{subsec.modnet} in the following sense. For the sake of completeness we state also the notion of $H^s$-weak solution that will be useful for some class of problems (see Remark \ref{rmk.Hs} below).
\begin{Def}\label{def.veryweak}
The net $(u_\varepsilon)_\varepsilon \in \lbrace C([0,T],H^\infty(\mathbb{R}^n))\rbrace^{(0,1]}$ (resp. $(u_\varepsilon)_\varepsilon \in \lbrace C([0,T],H^s(\mathbb{R}^n))\rbrace^{(0,1]}$) \textit{is an $H^\infty$-weak solution (resp. $H^s$-weak solution)} of \eqref{mainprobIntro},
if there exist $(u_{0,\varepsilon})_{\varepsilon}$ and $(f_\varepsilon)_\varepsilon$ $H^\infty$-moderate (resp. $H^s$-moderate) regularisations of $u_0$ and $f$, respectively, such that, for any $\varepsilon \in (0,1]$,  $u_\varepsilon$ solves 
\begin{equation}\label{probveryweak}
    \begin{cases}
        P_\varepsilon u_\varepsilon=f_\varepsilon \quad \text{in} \ \ (0,T]\times \mathbb{R}^n, \\
        u_\varepsilon(0,\cdot)=u_{0,\varepsilon} \quad \text{in} \ \ \mathbb{R}^n,
    \end{cases}
\end{equation}
 and $(u_\varepsilon)_\varepsilon$ is $H^\infty$-moderate (resp. $H^s$-moderate).
\end{Def}

\subsection{Regularised Problem}
In order to study the weak well-posedness of problem \eqref{mainprobIntro}, the first step is to consider, for fixed $\eps \in (0,1]$,  the \textit{$\eps$-regularised problem} (cf. \cite{AACG})
\begin{equation}\label{eq.reg}
\begin{cases}
        P_\varepsilon u=g \quad \text{in} \ \ (0,T]\times \mathbb{R}^n, \\
        u_\varepsilon(0,\cdot)=v_0 \quad \text{in} \ \ \mathbb{R}^n,
    \end{cases}
\end{equation}
where $v_0 \in H^s(\mathbb{R}^n)$ and $g$ has suitable regularity. 

The key result to obtain the existence and uniqueness of an $H^\infty$-weak solution is Theorem~\ref{theo.apriori} below. Indeed, the estimates we derive in this theorem for the solution of \eqref{eq.reg} (for each fixed $\varepsilon$) ensure the moderateness of the net $(u_\varepsilon)_\varepsilon$, as well as the uniqueness (in the sense explained in Subsection \ref{sub.uniq} below) of an $H^\infty$-weak solution, which we discuss in detail in Section~\ref{sec.exandun}.

The proof of Theorem \ref{theo.apriori} follows the same pattern as Lemma 4.1 in \cite{FT} (cf. \cite{KPRV}, \cite{FS}). We refer the reader to \cite{FT} for further details.
\begin{Th}\label{theo.apriori}
    Let $\varepsilon \in (0,1]$ and consider the regularised problem \eqref{eq.reg} with Cauchy data $v_0 \in H^s(\R^n)$ and assume that \ref{hp1}-\ref{hp5} hold for $(P_\eps)_\eps$. 
    
    Then, we have:
    \begin{itemize}
    \item [(i)] If $g\in L^1([0,T];H^s(\R^n))$, the IVP \eqref{eq.reg} has a unique solution $u_\varepsilon\in C([0,T];H^s(\R^n))$ satisfying    
    \begin{equation}\label{eq.i}
    \sup_{0\leq t\leq T}\|u_\varepsilon(t,\cdot)\|_s\leq C_2e^{C_1\omega(\eps)^{-k_1}T}\left(\|v_0\|_s+\int_0^T \|g(t,\cdot)\|_sdt\right).
\end{equation}
\item [(ii)]  If $g\in L^2([0,T];H^s(\R^n))$, the IVP \eqref{eq.reg} has a unique solution $u_\varepsilon\in C([0,T];H^{s}(\R^n))$ satisfying
\begin{equation}
\begin{split}
    &\sup_{0\leq t \leq T}\|u_\varepsilon(t,\cdot)\|^2_s+\omega(\eps)^{-k}\int_0^T \|\langle x \rangle^{-{N/2}}\Lambda^{s+1/2}u_\eps(t,\cdot)\|_0^2\,dt\\
    &\hspace{4cm} \leq C_2e^{C_1\omega(\eps)^{-k_1}T}\left(\|v_0\|_s^2+\int_0^T \|g(t,\cdot)\|_s^2 dt\right).
\end{split}
\end{equation}
\item [(iii)]  If $\Lambda^{s-\frac{m-1}{2}}g\in L^2([0,T]\times \R^n;\langle x \rangle^N dx dt)$, the IVP \eqref{eq.reg} has a unique solution $u_\varepsilon\in C([0,T];H^{s}(\R^n))$ satisfying
\begin{equation}
\begin{split}
    &\sup_{0\leq t \leq T}\|u_\varepsilon(t,\cdot)\|^2_s+\omega(\eps)^{-k} \int_0^T \|\langle x \rangle^{-{N/2}}\Lambda^{s+1/2}u_\eps(t,\cdot)\|_0^2\,dt\\
    & \hspace{4cm} \leq C_2e^{C_1\omega(\eps)^{-k_1}T}\left(\|v_0\|_s^2+\int_0^T \| \langle x \rangle^{N/2}\Lambda^{s-1/2}g(t,\cdot)\|^2_0 \,dt\right),
    \end{split}
\end{equation}
\end{itemize}  
where $C_1,C_2>0$ and $k, k_1 \in \mathbb{N}$, depend on the coefficient of $P$ defined in \eqref{eq.Peps} and on $s$, but are independent of $\eps$.
\end{Th}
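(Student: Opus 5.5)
We will follow the scheme of Lemma 4.1 in \cite{FT}, i.e.\ Doi's gauge (smoothing) method, while tracking every constant's dependence on $\eps$ through $\omega(\eps)$. First we obtain a priori energy estimates for smooth solutions. Existence then follows from the same estimates applied to the adjoint problem $P_\eps^\ast$, which has the same structure, via a duality/Hahn--Banach argument (or a viscosity regularisation $P_\eps - i\eta\Delta$ with $\eta\to0$). Uniqueness follows directly from the energy inequality. Since $\eps$ is fixed, the coefficients of $P_\eps$ lie in $C^\infty_b$, so all symbolic manipulations are legitimate. The only task is to make the constants explicit.

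The core step is the construction of the gauge. By the Remark after Lemma \ref{lemma.Doiepsilon} (which rests on Proposition \ref{prop.qepsilon}), we have symbols $d_\eps$ satisfying \eqref{eq.pSM} and, by Remark \ref{rmk.C1}, the bound $H_{a_{2,\eps}}d_\eps\ge C_1\langle x\rangle^{-N}|\xi|-C'$ with $C_1$ as large as we like. Set $K_\eps=\mathrm{Op}(e^{d_\eps})$. Its symbol lies in $S^0$, with seminorms bounded by powers of $\omega(\eps)^{-1}$ because $x$-derivatives of $d_\eps$ cost at most $\omega(\eps)^{-|\beta|+1}$. Since $d_\eps$ is real and bounded uniformly in $\eps$, Theorems \ref{theo.comp} and \ref{theo.adjoint} show that $\mathrm{Op}(e^{-d_\eps})K_\eps=I+R_\eps$ with $R_\eps\in\Psi^{-1}$, and the seminorms of $R_\eps$ grow like $\omega(\eps)^{-k}$. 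We will therefore use $\Lambda^{-1}$-corrections (or replace $\Lambda^s K_\eps$ by $\Lambda^s K_\eps+\lambda\Lambda^{s-1}$) to guarantee that $\|K_\eps\Lambda^s u\|_0$ and $\|u\|_s$ are equivalent, with equivalence constants $O(\omega(\eps)^{-k})$. Alternatively, we may replace $d_\eps$ by $\theta d_\eps$ with a small universal $\theta$ and prove invertibility by a Neumann series at high frequencies.

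Next comes the energy computation. Let $w=K_\eps\Lambda^s u$ and differentiate $\|w\|_0^2$. The term $D_t$ contributes $2\,\mathrm{Re}\, i(P_\eps u\text{-part},w)$. The commutator $i[A_\eps,K_\eps]$ has principal symbol $-\{a_{2,\eps},e^{d_\eps}\}=-e^{d_\eps}H_{a_{2,\eps}}d_\eps$, which is $\le -C_1\langle x\rangle^{-N}|\xi|e^{d_\eps}+C'e^{d_\eps}$. This yields a negative term $-c\|\langle x\rangle^{-N/2}\Lambda^{1/2}w\|_0^2$. The imaginary part of $B_\eps$ is controlled by \ref{hp4}: its contribution is bounded by $c_0\langle x\rangle^{-N}|\xi|$, which is absorbed once $C_1>c_0$. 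All other errors are symbols of order $\le0$ (the real part of $B_\eps$ and $A_\eps$ are symmetric modulo order zero, by \ref{hp1}), with seminorms of size $\omega(\eps)^{-k_1}$ by \eqref{eq.conda}, \eqref{eq.beps} and \ref{hp5}. These are handled by Theorem \ref{theo.bound}, and positivity in the main term is obtained from the sharp Gårding inequality, Theorem \ref{theo.ShGa}. We arrive at
\[
\frac{d}{dt}\|w\|_0^2+c\,\|\langle x\rangle^{-N/2}\Lambda^{1/2}w\|_0^2\le C\omega(\eps)^{-k_1}\|w\|_0^2+2|(K_\eps\Lambda^s g,w)_0|.
\]
Gronwall's lemma gives (i). For (ii), we bound the forcing term by $\|g\|_s^2+\|w\|_0^2$. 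For (iii), we write $|(K_\eps\Lambda^s g,w)|\le\|\langle x\rangle^{N/2}\Lambda^{s-1/2}g\|_0\,\|\langle x\rangle^{-N/2}\Lambda^{1/2}w\|_0$ modulo commutators of lower order, and absorb the second factor into the smoothing term. The factor $\omega(\eps)^{-k}$ in front of the smoothing integral comes from converting $\|\langle x\rangle^{-N/2}\Lambda^{1/2}w\|_0$ back to $u$ using the inverse of $K_\eps$.

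The main obstacle is keeping the $\varepsilon$-dependence polynomial in $\omega(\eps)^{-1}$ at every step. This matters most in the invertibility of $K_\eps$ and in the sharp Gårding step, where Theorem \ref{theo.ShGa} needs $C_1\langle x\rangle^{-N}|\xi|-c_0\langle x\rangle^{-N}|\xi|\ge0$ with constants uniform in $\eps$, while the error seminorms are allowed to blow up. Our approach is to arrange that only principal-symbol quantities (namely $d_\eps$, $\mathcal A_\eps$, and $\mathrm{Im}\,b_{k,\eps}$, all uniformly bounded) enter the sign conditions, and to push every $x$-derivative into the remainders. There the blow-up costs only a factor $\omega(\eps)^{-k_1}$ in the Gronwall exponent.
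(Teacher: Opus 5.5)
Your overall scheme is the paper's: the gauge $K_\eps=\mathrm{Op}(e^{d_\eps})$ from Lemma \ref{lemma.Doiepsilon} with the large constant of Remark \ref{rmk.C1}, sharp G\aa rding, Gronwall, and polynomial bookkeeping in $\omega(\eps)^{-1}$. However, the energy step has a concrete gap. With $w=K_\eps\Lambda^s u$, the commutator that appears is $[K_\eps\Lambda^s,A_\eps]=[K_\eps,A_\eps]\Lambda^s+K_\eps[\Lambda^s,A_\eps]$, and you only treat the first piece. Modulo order zero, the second piece contributes $2\,\mathrm{Re}(\mathrm{Op}(p_\eps)w,w)_0$ with
\[
p_\eps(x,\xi)=\{\langle\xi\rangle^s,a_{2,\eps}\}\langle\xi\rangle^{-s}=s\langle\xi\rangle^{-2}\,\xi\cdot\nabla_x a_{2,\eps}(x,\xi).
\]
This is a real symbol of order one with no sign. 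So your claim that all remaining errors have order $\le 0$ and can be handled by Theorem \ref{theo.bound} fails for this term: that route would require control of $\|w\|_{1/2}$, i.e.\ of $\|u\|_{s+1/2}$, which you do not have. For the same reason, your principle of pushing every $x$-derivative into the remainders cannot be applied here. What rescues the term is \ref{hp3}, which gives $|p_\eps|\le C_n|s|\nu\langle x\rangle^{-N}|\xi|$ uniformly in $\eps$. Hence $p_\eps$ must enter the sign condition together with $\mathrm{Im}\,b_\eps$. It is then absorbed by taking the Doi constant $C_1>C_n|s|\nu+\sqrt{n}\,c_0$ before applying Theorem \ref{theo.ShGa}; this is allowed since constants may depend on $s$. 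This is exactly the $C_3\nu\langle x\rangle^{-N}|\xi|$ term in the paper's proof. So the $\eps$-uniform decay of the first derivatives of $a_{ij,\eps}$ is used directly in the energy estimate, not only inside Doi's lemma.

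Two smaller points. First, your displayed inequality in $\|w\|_0^2$ alone does not close. The remainders are bounded by $\|u\|_s^2$, and you only have $\|u\|_s\lesssim\omega(\eps)^{-k}\bigl(\|w\|_0+\|u\|_{s-1}\bigr)$. A Neumann series at high frequencies does not remove the low-frequency part. You therefore need the energy $N^s_\eps(u)^2=\|w\|_0^2+\|u\|_{s-1}^2$, as the paper uses, together with the easy bound for $\partial_t\|u\|_{s-1}^2$. Second, converting the gain back to $u$ through the parametrix of $K_\eps$ gives
\[
\|\langle x\rangle^{-N/2}\Lambda^{s+1/2}u\|_0\le C\omega(\eps)^{-k'}\bigl(\|\langle x\rangle^{-N/2}\Lambda^{1/2}w\|_0+N^s_\eps(u)\bigr).
\]
This puts a \emph{small} factor $\omega(\eps)^{2k'}$ in front of the smoothing integral, not the large factor $\omega(\eps)^{-k}$ you attribute to this step. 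The stated form is obtained only afterwards, by multiplying through and absorbing the polynomial loss into $e^{C_1\omega(\eps)^{-k_1}T}$ (enlarging $C_1$, with $C_2$ depending on $T$). The paper's \eqref{eq.fin1piu2} contains the same exponent slip, which is harmless for the same reason.
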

\begin{proof}
The idea of the proof, following the approach in \cite{FT}, is to define, for each fixed $\varepsilon$, a norm $N^s_\varepsilon(u)$ on the Sobolev space $H^s(\R^n)$ that is equivalent to the standard norm $\|\cdot\|_s$ (see \cite{FT}), and then to estimate the time derivative of this norm, keeping track of the explicit dependence on $\varepsilon$.

To this end, fix $\varepsilon \in (0,1]$ and $s \in \R$. Define, for $\varepsilon \in (0,1]$ the operator $E_\varepsilon=\mathrm{Op}(e^{d_\eps})\in \Psi^0$, where $d_\eps \in S^0$ is the symbol constructed in Lemma \ref{lemma.Doiepsilon} satisfying \eqref{eq.pDoi}.
We then introduce the quantity
    \begin{equation*}
    N^s_\varepsilon(u):=\bigl(\|E_\varepsilon \Lambda^su\|_0^2+\|u\|_{s-1}^2)^{1/2}.
    \end{equation*}
    It is also convenient to denote $\tilde{E}_\varepsilon=\mathrm{Op}(e^{-d_\eps})$ and write $\tilde{E}_\varepsilon E_\eps=I+R_{-1,\varepsilon}$, with $R_{-1,\varepsilon}=\mathrm{Op}(r_{-1,\varepsilon})$, $r_{-1,\varepsilon} \in S^{-1}$. 
    By Theorem \ref{theo.bound} and Theorem \ref{theo.comp} we get
    \begin{equation}\label{eq.normeq1}
    \begin{split}
        \|u\|_s^2 &\leq \|\tilde{E}_\varepsilon E_\varepsilon\Lambda^su\|_0^2+\|R_{-1,\varepsilon}\Lambda^su\|_0^2\\
        &\leq C_1'(|e^{-d_\eps}|_{k_1'}^{(0)}\|E_\varepsilon \Lambda^s u\|_0^2+|r_{-1,\varepsilon}|_{k_1''}^{(-1)}\|u\|_{s-1}^2) \\
        &\leq C_1''\Bigl(|e^{-d_\eps}|_{k_1'}^{(0)}+|e^{d_\eps}|_{k_1'''}^{(0)}|e^{-d_\eps}|^{(0)}_{k_1'''}\Bigr)N^s_\varepsilon(u)^2\\
        &\leq C_1 \omega(\varepsilon)^{-k_1}N_\varepsilon^s(u)^2, \quad u \in H^s(\R^n),
    \end{split}
    \end{equation}
    where $C_1,C_1',C_1''>0$ $k_1',k_1'',k_1''' \in \mathbb{N}_0$, depend on $s$ and on the dimension $n$, but are independent of $\varepsilon$, and $k_1=\max \lbrace k_1'-1,2k_1'''-2 \rbrace$. Similarly,
    \begin{equation}\label{eq.normeq2}
    \begin{split}
        N_\varepsilon(u)^2 &\leq (C_2'|e^{d_\eps}|_{k_2'}^{(0)}+1)\|u\|_s^2 \\
        &\leq C_2\omega(\varepsilon)^{-k_2}\|u\|_s^2, \quad u \in H^s(\R^n),
        \end{split}
    \end{equation}
    with $C_2,C_2'>0$, $k_2' \in \mathbb{N}_0$, independent of $\eps$ (but, as before depend on $s$ and on $n$) and $k_2=k_2'-1$. Therefore, \eqref{eq.normeq1} and \eqref{eq.normeq2} show that the two norms $N_\varepsilon^s(\cdot)$, and $\|\cdot \|_s$ are indeed equivalent, and provide precise information on the dependence of equivalent constants on $\varepsilon$.  
    
    Let now $u_\varepsilon \in C([0,T];H^s(\R^n))$ be the solution of \eqref{eq.reg} (in the case of $g \in L^1([0,T],H^s(\R^n)$). For the existence of such a $u_\eps$ we refer to \cite{FT} (cf. \cite{KPRV} and Remark \ref{rmk.class} below). Our aim is to estimate $\partial_t N_\varepsilon^s(u_\varepsilon)^2$. By integrating in time, and using the equivalence shown in \eqref{eq.normeq1} and \eqref{eq.normeq2}, this estimate will lead to the desired estimate $(i)$, $(ii)$ and $(iii)$.
    
    \noindent To do so, we start by examining the term $\partial_t \|u_\varepsilon\|_{s-1}^2$. By using the self-adjoint property of $A_\varepsilon$, and once again Theorem \ref{theo.bound} and Theorem \ref{theo.comp}, we get
    \begin{equation*}
    \begin{split}
        \partial_t \|u_\varepsilon\|_{s-1}^2&=2\mathrm{Re}(\Lambda^{s-1}\partial_tu_\varepsilon,\Lambda^{s-1}u_\varepsilon)_0 \\
        &=2\mathrm{Re}(\Lambda^{s-1}(i(A_\eps+B_\eps+V_\eps)u_\eps+ig),\Lambda^{s-1}u_\eps)_0 \\
        &=2\mathrm{Re}(i\Lambda^{s-1}A_\varepsilon u_\varepsilon,\Lambda^{s-1}u_\varepsilon)_0+2\mathrm{Re}(i\Lambda^{s-1}(B_\eps+V_\eps)u_\eps,\Lambda^{s-1}u_\eps)_0+2\mathrm{Re}(i\Lambda^{s-1}g,\Lambda^{s-1}u_\varepsilon)_0  \\
        &=2 \mathrm{Re}(i[\Lambda^{s-1},A_\varepsilon] u_\varepsilon,\Lambda^{s-1}u_\varepsilon)_0+2\mathrm{Re}(i\Lambda^{s-1}(B_\eps+V_\eps)u_\eps,\Lambda^{s-1}u_\eps)_0+2\mathrm{Re}(i\Lambda^{s-1}g,\Lambda^{s-1}u_\varepsilon)_0 \\
        & \leq C'\Bigl(\bigl(|a_\eps|_{k'}^{(2)}+|b_\eps|^{(1)}_{k''}+|V_\eps|^{(0)}_{k'''}\bigr)\|u_\eps\|_s^2+\|g\|_s\|u_\eps\|_s\Bigr).
    \end{split}
    \end{equation*}
    Therefore, by \eqref{eq.conda1}, \eqref{eq.conda2}, \eqref{eq.Veps}, \eqref{eq.beps}, we obtain
    \begin{equation}\label{eq.smeno1}
        \partial_t \|u_\varepsilon\|_{s-1}^2 \leq C \Bigl(\omega(\eps)^{-k}\|u_\eps\|_s^2+\|g\|_s\|u_\eps\|_s\Bigr),
    \end{equation}
    for some $k=k(n,s)$ and some $C=C(n,s)>0$ independent of $\eps$.
    We next estimate the term $\partial_t\|E_\eps \Lambda^su_\eps\|_0^2$. By using again the self-adjoint property of $A_\eps$ we have 
    \begin{equation}\label{eq.1piu2piu3}
    \begin{split}
        \partial_t\|E_\eps \Lambda^su_\eps\|_0^2&=2\mathrm{Re}(E_\eps \Lambda^s\partial_tu_\eps,E_\eps\Lambda^su_\eps)_0 \\
        &\leq 2\mathrm{Re}(E_\eps \Lambda^s iA_\eps u_\eps,E_\eps\Lambda^s u_\eps)_0+2\mathrm{Re}(E_\eps \Lambda^s i(B_\eps+V_\eps) u_\eps,E_\eps\Lambda^su_\eps)_0\\
        &\quad + 2\mathrm{Re}(E_\eps \Lambda^s ig,E_\eps\Lambda^su_\eps)_0\\
        &\leq 2\mathrm{Re}([E_\eps \Lambda^s, iA_\eps] u_\eps,E_\eps\Lambda^s u_\eps)_0+2\mathrm{Re}(E_\eps \Lambda^s i(B_\eps+V_\eps) u_\eps,E_\eps\Lambda^su_\eps)_0\\
        &\quad + 2\mathrm{Re}(E_\eps \Lambda^s ig,E_\eps\Lambda^su_\eps)_0\\
        &=(I)+(II)+(III).
    \end{split}
    \end{equation}
    Hence, to estimate $\partial_t\|E_\eps \Lambda^su_\eps\|_0^2$ we estimate $(I)$, $(II)$ and $(III)$ separately. 

    For $(I)$, as in \cite{FT}, we have
    \begin{equation*}
        (I)=2\mathrm{Re}([E_\eps,iA_\eps]\Lambda^su_\eps,E_\eps \Lambda^s u_\eps)_0+2\mathrm{Re}(E_\eps[\Lambda^s,iA_\eps]u_\eps,E_\eps\Lambda^su_\eps)_0.
    \end{equation*} 
    At this point, we note that, by Theorem \ref{theo.comp}
    \begin{equation*}
    \begin{split}
[E_\varepsilon,iA_\eps]&=\mathrm{Op}(\lbrace e^{d_\eps},a_\eps\rbrace)+\mathrm{Op}(r^1_{0,\eps})\\
        &=\mathrm{Op}(\lbrace d_\eps,a_\eps \rbrace e^{d_\eps})+\mathrm{Op}(r^1_{0,\eps})\\
        &=\mathrm{Op}(-\lbrace a_\eps, d_\eps\rbrace)E_\eps+\mathrm{Op}(r^2_{0,\eps}),
        \end{split}
    \end{equation*}
    and for all $k \in \mathbb{N}_0$, there exist $k_1 \in \mathbb{N}_0$ and $C=C(k)>0$, such that     
    \begin{equation*}
|r^1_{0,\eps}|_k^{(0)}+|r^2_{0,\eps}|_{k}^{(0)} \leq C |a_\eps|_{k_1}^{(2)}\Bigl( |d_\eps|_{k_1}^{(0)}+(|d_\eps|_{k_1}^{(0)})^2\Bigr).
    \end{equation*}
    Moreover, 
    \begin{equation}
    \begin{split}
E_\eps[\Lambda^s,iA_\eps]&=E_\eps[\Lambda^s,iA_\eps]\Lambda^{-s}\tilde{E}_\eps E_\eps \Lambda^s+\mathrm{Op}(r^3_{0,\eps})\\
&=\mathrm{Op}(\lbrace \langle \xi \rangle^{s},a_\eps \rbrace \langle \xi \rangle^{-s})E_\eps\Lambda^s+\mathrm{Op}(r_{s,\eps}^1) 
\end{split}
    \end{equation}
    where, once again, for any $k \in \mathbb{N}_0$, there exist $k_1' \in \mathbb{N}_0$ and $C'=C'(k)>0$, such that     
    \begin{equation*}
|r^3_{0,\eps}|_k^{(0)}+|r^1_{s,\eps}|_k^{(s)} \leq C' |a_\eps|_{k_1'}^{(2)}|d_\eps|_{k_1'}^{(0)}.
    \end{equation*}
    Thus (recall $H_{a_\eps}d_\eps=\lbrace a_\eps, d_\eps \rbrace$)
    \begin{equation*}
    \begin{split}
        (I)&= -2\mathrm{Re}(\mathrm{Op}(H_{a_\eps}d_\eps -\lbrace \langle \xi \rangle^{s},a_\eps \rbrace \langle \xi \rangle^{-s})E_\eps\Lambda^su_\eps,E_\eps\Lambda^su_\eps)_0+2\mathrm{Re}(\mathrm{Op}(r_{s,\eps}^2)u_\eps,E_\eps\Lambda^su_\eps)_0
        \end{split}
    \end{equation*}
    with $r_{s,\eps}^2 \in S^s$ having seminorms bounded by the seminorms of $a_\eps$ and $d_\eps$, as before. 

For $(II)$ we have 
\begin{equation}
    (II)=2\mathrm{Re}(i(B_\eps+V_\eps)E_\eps\Lambda^su_\eps,E_\eps \Lambda^su_\eps)_0+(\mathrm{Op}(r^3_{s,\eps})u_\eps,E_\eps\Lambda^s u_\eps)_0,
\end{equation}
and then 
\begin{equation}\label{eq.1piu2}
\begin{split}
    (I)+(II)&=2\mathrm{Re}(\mathrm{Op}(-H_{a_\eps}d_\eps +\lbrace \langle \xi \rangle^{s},a_\eps \rbrace \langle \xi \rangle^{-s}+ib_\eps)E_\eps\Lambda^su_\eps,E_\eps\Lambda^su_\eps)_0 \\
    &\quad +2\mathrm{Re}(iV_\eps E_\eps\Lambda^su_\eps,E_\eps \Lambda^su_\eps)_0+(\mathrm{Op}(r^2_{s,\eps}+r^3_{s,\eps})u_\eps,E_\eps\Lambda^s u_\eps)_0.
\end{split}
\end{equation}
The idea now is to use the Sharp G\aa rding inequality (see Theorem \ref{theo.ShGa}) to estimate the first term in \eqref{eq.1piu2} . To this end, we use the hypotheses \ref{hp2}, \ref{hp4} and \eqref{eq.pDoi} to obtain (recall $B_\eps=\mathrm{Op}(b_\eps)$)
\begin{equation*}
\begin{split}
    \mathrm{Re}\Bigl(-H_{a_\eps}d_\eps +\lbrace \langle \xi \rangle^{s},a_\eps \rbrace \langle \xi \rangle^{-s}+ib_\eps\Bigr)&=-H_{a_\eps}d_\eps +\lbrace \langle \xi \rangle^{s},a_\eps \rbrace \langle \xi \rangle^{-s}-\mathrm{Im}(b_\eps)\\
    &\leq -C_1\langle x \rangle^{-N}|\xi|+C_2+C_3\nu \langle x\rangle^{-N}|\xi|+c_0\langle x \rangle^{-N}|\xi|,
    \end{split}
\end{equation*}
with $C_2,C_3,\nu,c_0>0$ universal constants and $C_1>0$ can be taken arbitrarily large (see Remark \ref{rmk.C1}). Hence, by using $(1+|\xi|^2)^{1/2}\leq 1+|\xi|$, 
\begin{equation*}
    \mathrm{Re}\Bigl(-H_{a_\eps}d_\eps +\lbrace \langle \xi \rangle^{s},a_\eps \rbrace \langle \xi \rangle^{-s}+ib_\eps\Bigr) \leq -C\langle x \rangle^{-N}\langle\xi\rangle+C_2',
\end{equation*}
with $C,C_2'>0$ universal constants independent of $\eps$. Thus, by Theorem \ref{theo.ShGa}, we get  
\begin{equation*}
\begin{split}
    (I)+(II) &\leq -2C\mathrm{Re}(\langle x \rangle^{-N}\Lambda E_\eps \Lambda^{s} u_\eps,E_\eps\Lambda^s u_\eps)_0+C_2'\|E_\eps\Lambda^su_\eps\|_0^2 \\
    &\quad +2\mathrm{Re}(iV_\eps E_\eps\Lambda^su_\eps,E_\eps \Lambda^su_\eps)_0+(\mathrm{Op}(r^2_{s,\eps}+r^3_{s,\eps})u_\eps,E_\eps\Lambda^s u_\eps)_0 \\
    &\leq -2C\mathrm{Re}(\langle x \rangle^{-N}\Lambda E_\eps \Lambda^{s} u_\eps,E_\eps\Lambda^s u_\eps)_0+C_1 \omega(\eps)^{-k_1}N_\eps^s(u_\eps)^2,
\end{split}
\end{equation*}
for some $C,C_1>0$ and $k_1 \in \mathbb{N}$, possibly differing from previous occurrences but independent of $\eps$.
We now note that (cf. \cite{KPRV}, p. 393)
\begin{equation*}
    \langle x \rangle^{-N} \Lambda=\mathrm{Op}(\langle x \rangle^{-N}\langle \xi \rangle)=\mathrm{Op}(\langle x \rangle^{-N/2}\langle \xi \rangle^{1/2})\mathrm{Op}(\langle x \rangle^{-N/2}\langle \xi \rangle^{1/2})+\mathrm{Op}(r_0), \quad r_0 \in S^0,
\end{equation*}
and, by Theorem \ref{theo.adjoint},
\begin{equation*}
    \mathrm{Op}(\langle x \rangle^{-N/2}\langle \xi \rangle^{1/2})=\bigl(\mathrm{Op}(\langle x \rangle^{-N/2}\langle \xi \rangle^{1/2})\bigr)^\ast + \mathrm{Op}(r_{-1/2}), \quad r_{-1/2} \in S^{-1/2}.
\end{equation*}
Thus, possibly enlarging the universal constant $C_1$, we get 
\begin{equation}\label{eq.quasifinalIandII}
    (I)+(II) \leq -2C\mathrm{Re}(\langle x \rangle^{-N/2}\Lambda^{1/2} E_\eps \Lambda^{s} u_\eps,\langle x \rangle^{-N/2}\Lambda^{1/2}E_\eps\Lambda^s u_\eps)_0+C_1 \omega(\eps)^{-k_1}N_\eps^s(u_\eps)^2.
\end{equation}
Now, note that by using once again the properties of the calculus stated in Theorem \ref{theo.bound} and Theorem \ref{theo.comp}, we get
\begin{equation}\label{eq.us}
\begin{split}
\|\langle x \rangle^{-{N/2}}\Lambda^{s+1/2}u_\eps\|_0&=\|\langle x \rangle^{-N/2}\Lambda^{1/2}\tilde{E}_\eps E_\eps \Lambda^s u_\eps\|_0+\|\langle x\rangle^{-N/2}\Lambda^{1/2}R_{-1,\eps}\Lambda^{s}u_\eps\|_0\\
&\leq C'\omega(\eps)^{-k'}\|\langle x \rangle^{-N/2}\Lambda^{1/2}E_\eps \Lambda^s u_\eps\|_0+C''\omega(\eps)^{-k''}N_\eps^s(u_\eps),
\end{split}
\end{equation}
for some universal constants $C',C''>0$ and $k',k'' \in \mathbb{N}$.

Therefore, by combining \eqref{eq.quasifinalIandII} and \eqref{eq.us} with possibly different constants $C,C_1>0$ and $k,k_1 \in \mathbb{N}$, we obtain
\begin{equation}\label{eq.fin1piu2}
    (I)+(II) \leq  -C\omega(\eps)^{-k}\|\langle x \rangle^{-{N/2}}\Lambda^{s+1/2}u_\eps\|_0^2+C_1\omega(\eps)^{-k_1}N_\eps^s(u_\eps)^2.
\end{equation}
The estimate of the term $(III)$ in \eqref{eq.1piu2piu3} depends on which of the estimate $(i)$, $(ii)$, or $(iii)$ in the statement we aim to obtain. In what follows, we denote by $C,C_1,C_2>0$ and $k,k_1,k_2 \in \mathbb{N}$ some constants, possibly changing from line to line, but always independent of $\eps$.  

\textit{Case $(i)$}.
In this case it is sufficient to note that 
\begin{equation}\label{eq.est3}
    (III)=2\mathrm{Re}(E_\eps \Lambda^s ig,E_\eps\Lambda^su_\eps)_0 \leq C_2\omega(\eps)^{-k_2}N_\eps^s(g)N_\eps^s(u_\eps).
\end{equation}
Hence, by \eqref{eq.smeno1}, \eqref{eq.1piu2piu3} and \eqref{eq.fin1piu2}, we get 
\begin{equation*}
    \partial_t N^s_\eps(u_\eps)^2 \leq C_1\omega(\eps)^{-k_1}N_\eps^s(u_\eps)^2+C_2\omega(\eps)^{-k_2}N_\eps^s(g)N_\eps^s(u_\eps)
\end{equation*}
and so 
\begin{equation*}
    \partial_t N^s_\eps(u_\eps) \leq C_1\omega(\eps)^{-k_1}N_\eps^s(u_\eps)+C_2\omega(\eps)^{-k_2}N_\eps^s(g),
\end{equation*}
that we may rewrite as 
\begin{equation*}
    \partial_t\Bigl(e^{-C_1\omega(\eps)^{-k_1}t}N_\eps^s(u_\eps)\Bigr)\leq e^{-C_1\omega(\eps)^{-k_1}t}C_2\omega(\eps)^{-k_2}N_\eps^s(g).
\end{equation*}
Therefore, by integrating in time and using the equivalence of the Sobolev norms $N_\eps^s(\cdot)$ and $ \|\cdot \|_s$ proved in \eqref{eq.normeq1} and \eqref{eq.normeq2}), we obtain that, for each $t \in [0,T]$,
\begin{equation}
\begin{split}
    \|u_\eps(t,\cdot)\|_s&\leq C_2\omega(\eps)^{-k_2}e^{C_1\omega(\eps)^{-k_1}T}\Bigl(\|v_0\|_s+\int_0^T \|g(t,\cdot)\|_s dt\Bigr)\\
    &\leq C_2e^{C_1\omega(\eps)^{-k_1}T}\Bigl(\|v_0\|_s+\int_0^T \|g(t,\cdot)\|_s dt\Bigr).
\end{split}
\end{equation}

\textit{Case $(ii)$.} To get $(ii)$ we use again \eqref{eq.smeno1}, \eqref{eq.fin1piu2} and \eqref{eq.est3}, to obtain
\begin{equation}\label{eq.usest2}
\begin{split}
    &\partial_t \Bigl(e^{-C_1\omega(\eps)^{-k_1}t}  N_\eps^s(u_\eps)^2 \Bigr)\\
    &\quad \leq e^{-C_1\omega(\eps)^{-k_1}t }\Bigl( -C\omega(\eps)^{-k}\|\langle x \rangle^{-{N/2}}\Lambda^{s+1/2}u_\eps\|_0^2+C_2\omega(\eps)^{-k_2}N_\eps^s(g)N_\eps^s(u_\eps)\Bigr).
\end{split}
\end{equation}
Moreover, note that 
\begin{equation}\label{eq.useful}
    N_\eps^s(g)N_\eps^s(u_\eps)\leq \omega(\eps)N_\eps^s(u_\eps)^2+\omega(\eps)^{-1}N_\eps^s(g)^2.
\end{equation}
Hence, by reasoning as before, for each $t \in [0,T]$ we get
\begin{equation*}
\begin{split}
&\|u_\varepsilon(t,\cdot)\|^2_s+\omega(\eps)^{-k}\int_0^T \|\langle x \rangle^{-{N/2}}\Lambda^{s+1/2}u_\eps(t,\cdot)\|_0^2\,dt\\
    &\hspace{4cm} \leq C_2\omega(\eps)^{-k_2}e^{C_1\omega(\eps)^{-k_1}T}\left(\|v_0\|_s^2+\int_0^T \|g(t,\cdot)\|_s^2 dt\right)\\
    & \|u_\varepsilon(t,\cdot)\|^2_s+\omega(\eps)^{-k}\int_0^T \|\langle x \rangle^{-{N/2}}\Lambda^{s+1/2}u_\eps(t,\cdot)\|_0^2\,dt\\
    &\hspace{4cm} \leq C_2e^{C_1\omega(\eps)^{-k_1}T}\left(\|v_0\|_s^2+\int_0^T \|g(t,\cdot)\|_s^2 dt\right)\\
\end{split}
\end{equation*}
\textit{Case $(iii)$.} To obtain $(iii)$ we have to refine the estimate of the term $(III)$ in the following sense.
By using again \eqref{eq.useful}, we have (cf. \cite{FT})
\begin{equation*}
\begin{split}
    2\mathrm{Re}(E_\eps \Lambda^s ig,E_\eps\Lambda^su_\eps)_0&= 2\mathrm{Re}(\Lambda^{1/2}E_\eps \Lambda^{s-1/2} ig,E_\eps\Lambda^su_\eps)_0+2\mathrm{Re}([E_\eps,\Lambda^{1/2}] \Lambda^{s-1/2} ig,E_\eps\Lambda^su_\eps)_0\\
    &\leq 2\mathrm{Re}(E_\eps \Lambda^{s-1/2} ig,E_\eps\Lambda^{s+1/2}u_\eps)_0 + 2\mathrm{Re}(E_\eps \Lambda^{s-1/2} ig,[\Lambda^{1/2},E]\Lambda^su_\eps)_0\\
    &\quad + 2\mathrm{Re}([E_\eps,\Lambda^{1/2}] \Lambda^{s-1/2} g,E_\eps\Lambda^su_\eps)_0 \\
    & \leq 2\mathrm{Re}(\langle x \rangle^{N/2}E_\eps \Lambda^{s-1/2} ig,\langle x \rangle^{-N/2}E_\eps\Lambda^{s+1/2}u_\eps)_0 \\
    &\quad + 2\mathrm{Re}( \langle x \rangle^{N/2}E_\eps \Lambda^{s-1/2} ig,\langle x\rangle^{-N/2}[\Lambda^{1/2},E]\Lambda^su_\eps)_0\\
    &\quad + 2\mathrm{Re}(\langle x \rangle^{N/2}[E_\eps,\Lambda^{1/2}] \Lambda^{s-1/2} g,\langle x \rangle^{-N/2}E_\eps\Lambda^su_\eps)_0 \\
    &\leq C\omega(\eps)^{-k}\|\langle x \rangle^{N/2}\Lambda^{s-1/2}g\|_0^2+\omega(\eps)\Bigl(\|\langle x \rangle^{-N/2}\Lambda^{s+1/2}u_\eps\|_0^2+ N_\eps^s(u_\eps)^2 \Bigr).
\end{split}
\end{equation*}
Therefore, in this case
\begin{equation*}
\partial_t N_\eps^s(u_\eps)^2 \leq  -C\omega(\eps)^{-k}\|\langle x \rangle^{-{N/2}}\Lambda^{s+1/2}u_\eps\|_0^2+C_1\omega(\eps)^{-k_1}N_\eps^s(u_\eps)^2+C_2\omega(\eps)^{-k_2}\|\langle x \rangle^{N/2}\Lambda^{s-1/2}g\|_0^2,
\end{equation*}
which leads (by repeating the same argument of Cases $(i)$ and $(ii)$) to the fact that, for all $t \in [0,T]$, we have
\begin{equation*}
\begin{split}
&\|u_\varepsilon(t,\cdot)\|^2_s+\omega(\eps)^{-k} \int_0^T \|\langle x \rangle^{-{N/2}}\Lambda^{s+1/2}u_\eps(t,\cdot)\|_0^2\,dt\\
    & \hspace{4cm} \leq C_2\omega(\eps)^{-k_2}e^{C_1\omega(\eps)^{-k_1}T}\left(\|v_0\|_s^2+\int_0^T \| \langle x \rangle^{N/2}\Lambda^{s-1/2}g(t,\cdot)\|^2_0 \,dt\right)\\
    & \hspace{4cm} \leq C_2e^{C_1\omega(\eps)^{-k_1}T}\left(\|v_0\|_s^2+\int_0^T \| \langle x \rangle^{N/2}\Lambda^{s-1/2}g(t,\cdot)\|^2_0 \,dt\right)
    \end{split}
\end{equation*}
\end{proof}

We are now ready to employ this theorem to prove the \textit{existence} of an $H^\infty$-weak solution of the Cauchy problem \eqref{mainprobIntro}.

\section{Existence and uniqueness of an $H^\infty$-very weak solution}\label{sec.exandun}
The aim of this section is first to prove the existence of an $H^\infty$-very weak solution in the sense of Definition \ref{def.veryweak} and then to establish a uniqueness result for such a solution. As anticipated above, both results follow from the estimates obtained in Theorem \ref{theo.apriori}.
Note that when we choose singular initial data and right-hand side their regularisation naturally leads to a $H^\infty$-very weak solution. If we choose instead initial data and right-hand side in $H^s(\R^n)$ and we do not regularise them, then we will generate an $H^s$-very weak solution (see Remark \ref{rmk.Hs} below).

\subsection{Existence}
We start by studying the existence (recall Definition \ref{def.veryweak}). The main result of this work is the following theorem, which, once again, relies on the estimates obtained in Theorem \ref{theo.apriori}.
\begin{Th}\label{theo.existence}
Let the regularisation $(P_\varepsilon)_\varepsilon$ of the operator $P$ in \eqref{mainprobIntro} fulfill the hypotheses \ref{hp1}-\ref{hp5} and let $ f \in C([0,T];H^{-\infty}(\R^n))$ and $u_0 \in H^{-\infty}(\R^{n})$. Then there exists an $H^\infty$-very weak solution $(u_\eps)_\eps$ of the problem \eqref{mainprobIntro}.
\end{Th}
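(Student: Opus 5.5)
The natural approach is to regularise the data, solve the regularised problem for each fixed $\eps$ using Theorem~\ref{theo.apriori}, and then convert the $\eps$-dependent constants there into moderate bounds by choosing the scale $\omega(\eps)$ appropriately. First, set $u_{0,\eps}:=u_0\ast\varphi_\eps$ and $f_\eps(t,\cdot):=f(t,\cdot)\ast\varphi_\eps$. By Remark~\ref{rmk.h-infty}, $(u_{0,\eps})_\eps$ is an $H^\infty$-moderate regularisation of $u_0$. For $f$, we use that $f\in C([0,T];H^{-\infty}(\R^n))$ takes values in some fixed $H^{s_0}$ with $\sup_{t}\|f(t)\|_{s_0}<\infty$. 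This requires a small remark: either we read the hypothesis as $f\in C([0,T];H^{s_0})$ for some $s_0$, or we use compactness of $[0,T]$ together with the inductive limit structure. Proposition~\ref{prop.conv}, applied with $\omega(\eps)=\eps$, then gives $\|f_\eps(t)\|_{s}\le C\eps^{-(s-s_0)}\sup_t\|f(t)\|_{s_0}$ for every $s\ge s_0$, uniformly in $t$. Continuity in $t$ with values in $H^\infty$ follows from the same estimate applied to $f(t)-f(t')$. Hence $(f_\eps)_\eps$ is an $H^\infty$-moderate regularisation.

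Next, fix $\eps$ and $s\in\R$. Since $u_{0,\eps}\in H^s$ and $f_\eps\in C([0,T];H^s)\subset L^1([0,T];H^s)$, Theorem~\ref{theo.apriori}(i) provides a unique $u_\eps\in C([0,T];H^s)$ solving \eqref{probveryweak}. Uniqueness in each $H^s$ shows that the solutions obtained for different $s$ coincide, so $u_\eps\in C([0,T];H^\infty)$. The estimate \eqref{eq.i} then yields
\[
\sup_{t\in[0,T]}\|u_\eps(t)\|_s\le C_2e^{C_1\omega(\eps)^{-k_1}T}\Bigl(\|u_{0,\eps}\|_s+T\sup_t\|f_\eps(t)\|_s\Bigr)\le C\,e^{C_1\omega(\eps)^{-k_1}T}\eps^{-N_s}.
\]

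The main obstacle is the exponential factor $e^{C_1\omega(\eps)^{-k_1}T}$, which is not moderate for a power-type scale. The remedy is to choose a logarithmic scale, for instance $\omega(\eps)=(\log(1/\eps)+c)^{-1/k_1}$, or more generally any scale with $C_1T\omega(\eps)^{-k_1}\le M\log(1/\eps)+c$. With this choice the exponential is bounded by $C\eps^{-M}$, and the whole bound becomes $O(\eps^{-N_s-M})$. Such an $\omega$ tends to $0$ and dominates $c\,\eps^k$, so it is an admissible scale in the sense of Subsection~\ref{subsec.reg}.

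The delicate point is that $k_1$ and $C_1$ in Theorem~\ref{theo.apriori} depend on $s$, so a single scale must work for all $s$. I would handle this by taking $\omega(\eps)=(\log(1/\eps))^{-r}$ with $r$ small, or by going back to the proof and checking that the exponent comes only from the seminorms of $a_\eps$, $b_\eps$, $V_\eps$ and $d_\eps$. If the $s$-dependence cannot be removed, the fallback is slower scales, such as iterated logarithms, or accepting a scale chosen per $s$. Note also that the coefficient hypotheses \ref{hp1}--\ref{hp5} are stated for an arbitrary scale $\omega$, so imposing this logarithmic choice is consistent with them.

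With the scale fixed, $(u_\eps)_\eps$ is $H^\infty$-moderate, solves \eqref{probveryweak}, and is therefore an $H^\infty$-very weak solution.
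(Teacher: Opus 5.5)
Your overall route is the paper's: regularise $u_0$ and $f$ with $\varphi_\eps$, solve for each fixed $\eps$ by Theorem~\ref{theo.apriori}(i), and pick a slowly decaying scale so that $e^{C_1\omega(\eps)^{-k_1}T}$ becomes moderate. The gap is the decisive step, which you do not carry out: one scale, fixed \emph{before} solving, that works for \emph{every} $s$. Your first choice $\omega(\eps)=(\log(1/\eps)+c)^{-1/k_1}$ depends on $s$ through $k_1$. Your proposed repair $\omega(\eps)=(\log(1/\eps))^{-r}$ with $r$ small fails as soon as $rk_1>1$: then $C_1T(\log(1/\eps))^{rk_1}$ is not $O(\log(1/\eps))$, so the exponential is not $O(\eps^{-M})$ for any $M$. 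In Theorem~\ref{theo.apriori} the exponent $k_1=k_1(s)$ comes from the number of seminorms of $a_\eps$, $b_\eps$, $V_\eps$, $e^{\pm d_\eps}$ needed in Theorems~\ref{theo.bound}--\ref{theo.comp}. That number in general grows with $|s|$, and each $x$-derivative of these symbols costs a factor $\omega(\eps)^{-1}$. So re-examining the proof will not remove the $s$-dependence, and no fixed $r$ works for all $s$. Your other fallback, a scale chosen per $s$, is not admissible either: $P_\eps$, hence $u_\eps$, depends on $\omega$, so you would get a different net for each $s$ rather than one $H^\infty$-moderate net.

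What is needed is the option you list only as a last resort, and it is exactly the paper's choice: a scale with $\omega(\eps)^{-k}=o(\log(1/\eps))$ for \emph{every} $k$, i.e.\ an iterated logarithm. The paper takes $\omega(\eps)=(\log\log(1/\eps))^{-1}$. A version defined on all of $(0,1]$ is $\omega(\eps)=\bigl(\log(e+\log(1/\eps))\bigr)^{-1}$, which satisfies $\eps\le\omega(\eps)\le 1$ and $\omega(\eps)\to0$. Then for each $s$ there is $c_s$ with $C_1T\omega(\eps)^{-k_1}\le\log(1/\eps)+c_s$ for all $\eps\in(0,1]$. Hence $e^{C_1T\omega(\eps)^{-k_1}}\le e^{c_s}\eps^{-1}$, and $\sup_t\|u_\eps(t)\|_s=O(\eps^{-N_s-1})$ for every $s$, with one and the same net $(u_\eps)_\eps$. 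With this choice made explicit, the rest of your argument is fine and matches the paper: moderateness of the regularised data, and solving in each $H^s$ with uniqueness to get $u_\eps\in C([0,T];H^\infty(\R^n))$. The paper is in fact less careful than you about the uniform-in-$t$ bound for $f$.
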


\begin{proof}
Let $\varphi \in \mathscr{S}(\R^n)$, with $\int \varphi \, dx=1$. For $\eps \in (0,1]$ denote by $f_\eps(t,x)=f_\eps \ast \varphi_\eps$ and $u_{0,\eps}=u_0 \ast\varphi_\eps$.
If $f \in C([0,T];H^{-\infty}(\R^n))$ and $u_0 \in H^{-\infty}(\R^n)$ we know (see Remark \ref{rmk.h-infty}) that for each $s \in \R$ there exist $N_f,N_{u_0} \in \mathbb{N}$ and $C>0$ such that 
\begin{equation*}
    \|f_\eps\|_s\leq C \eps^{-N_f}, \quad \|u_{0,\eps}\|_s \leq C \eps^{-N_{u_0}},
\end{equation*}
 uniformly in $t \in [0,T]$ and $\eps \in (0,1]$. Hence, by \eqref{eq.i},
\begin{equation*}
\begin{split}
    \sup_{0\leq t\leq T}\|u_\varepsilon(t,\cdot)\|_s&\leq C_2e^{TC_1\omega(\eps)^{-k_1}}\left(\|u_{0,\eps}\|_s+\int_0^T \|f_\eps(t,\cdot)\|_sdt\right)\\
        &\leq C_2'e^{TC_1\omega(\eps)^{-k_1}}\eps^{-N_f-N_{u_0}}.
\end{split}
\end{equation*}
Therefore, using the positive scale $(\omega_\eps)_\eps$ defined by 
\begin{equation*}
    \omega(\eps):=\log(\log(\eps))^{-1}, \quad \eps \in (0,1],
\end{equation*}
we have the $H^\infty$-moderateness of $(u_\eps)_\eps$, and then the existence of an $H^\infty$-weak solution, follows. Note that the scale $\omega$ has been chosen to be independent of $k_1$ and therefore independent of the Sobolev order $s$, leading to $H^\infty$-moderateness. 
\end{proof}

\begin{Rem}
Note that, under the assumptions of Theorem \ref{theo.existence}, we have that the net of solutions $(u_\eps)_\eps$ generated, for each fixed $s \in \R$, satisfies the following estimates with respect to the nets $(f_\eps)_\eps, (u_{0,\eps})_{\eps}$ associated with the Cauchy data $f,u_0$ (see the proof of Theorem \ref{theo.existence}). 
\begin{itemize}
    \item [(i)] If $(f_\eps)_\eps\in \lbrace L^1([0,T];H^s(\R^n))\rbrace^{(0,1]}$ 
    \begin{equation*}
    \sup_{0\leq t\leq T}\|u_\varepsilon(t,\cdot)\|_s\leq Ce^{C_1\omega(\eps)^{-k_1}T}\left(\|u_{0,\eps}\|_s+\int_0^T \|f_\eps(t,\cdot)\|_sdt\right).
\end{equation*}
\item [(ii)]  If $(f_\eps)_\eps\in \lbrace L^2([0,T];H^s(\R^n))\rbrace^{(0,1]}$
\begin{equation*}
\begin{split}
    &\sup_{0\leq t \leq T}\|u_\varepsilon(t,\cdot)\|^2_s+\omega(\eps)^{-k}\int_0^T \|\langle x \rangle^{-{N/2}}\Lambda^{s+1/2}u_\eps(t,\cdot)\|_0^2\,dt\\
    &\hspace{4cm} \leq Ce^{C_1\omega(\eps)^{-k_1}T}\left(\|u_{0,\eps}\|_s^2+\int_0^T \|f_\eps(t,\cdot)\|_s^2 dt\right).
\end{split}
\end{equation*}
\item [(iii)]  If $(\Lambda^{s-\frac{m-1}{2}}f_\eps)_\eps\in \lbrace L^2([0,T] \times \R^n, \langle x \rangle^{-N}dxdt)\rbrace^{(0,1]}$
\begin{equation*}
\begin{split}
    &\sup_{0\leq t \leq T}\|u_\varepsilon(t,\cdot)\|^2_s+\omega(\eps)^{-k} \int_0^T \|\langle x \rangle^{-{N/2}}\Lambda^{s+1/2}u_\eps(t,\cdot)\|_0^2\,dt\\
    & \hspace{4cm} \leq Ce^{C_1\omega(\eps)^{-k_1}T}\left(\|u_{0,\eps}\|_s^2+\int_0^T \| \langle x \rangle^{N/2}\Lambda^{s-1/2}f_\eps(t,\cdot)\|^2_0 \,dt\right),
    \end{split}
\end{equation*}
\end{itemize}  
where $C,C_1>0$ and $k$, $k_1 \in \mathbb{N}$, depend on the coefficient of $P$ defined in \eqref{eq.Peps} and on $s$, but are independent of $\eps$.

\end{Rem}

\begin{Rem}\label{rmk.Hs}
    Let us emphasise that, in the case of $u_0 \in H^s(\R^n)$ and $f$ having suitable regularity (as required in $(i)$, $(ii)$ or $(iii)$ of the previous remark), we have that the net of solutions $(u_\eps)_\eps$ satisfies the estimates $(i)$, $(ii)$ and $(iii)$ with $u_0$ and $f$ appearing on the right-hand sides instead of their regularised versions $(f_\eps)_\eps$ and $(u_{0,\eps})_\eps$ (cf. Theorem \ref{theo.apriori}). It therefore follows that in this case, without regularising right-hand side and initial data, we can generate a very weak solution of $H^s$- type rather than $H^\infty$.
\end{Rem}

\subsection{Uniqueness}\label{sub.uniq}
We now prove a uniqueness result of an $H^\infty$-weak solution in the following sense (cf. \cite{AACG}): if we perturb the coefficients of the regularised operator by suitable negligible nets, then the
net of solution of the Cauchy problem associated with the perturbed operator will differ from $(u_\eps)_\eps$ by an $H^\infty$-negligible net (see Definition \ref{def.negHs} below).

For this purpose, we perturb the coefficients and define the net of operators $(P'_\eps)_\eps$ by
\begin{equation}\label{eq.Pepsprimo}
\begin{split}
    P_\eps'&=D_t-A'_\eps-B'_\eps-V'_\eps\\
    &:=\sum_{i,j=1}^nD_{x_i}\bigl((a_{ij,\varepsilon}(x)+n_{ij,\eps}(x))D_{x_j}\bigr)  -\sum_{k=1}^n(b_{k,\varepsilon}(x)+n_k(x))D_{x_k}-(V_\varepsilon(x)+n_{V,\eps}(x)),
\end{split}
\end{equation}
where 
\begin{itemize}
    \item The matrix $\mathcal{N}_\eps(x)=(n_{ij,\eps}(x))_{i,j}$ is real and symmetric for all $x \in \R^n$. Furthermore, $n_{ij,\eps} \in C^\infty(\R^n)$ and for all $q \in \N_0$ and all $\beta \in \N_0^n$ there exists $C>0$ such that 
    \begin{equation*}
        \sup_{x \in \R^n}|\partial_x^{\beta}n_{ij,\eps}(x)|\leq C\eps^{q}, \quad \forall i,j=1,\dots,n,
    \end{equation*}
    for all $\eps \in (0,1]$. Finally, for each fixed $i,j,k=1,\dots,n$, we assume
\begin{equation*}
|\partial_{x_k} n_{ij,\varepsilon}(x)| \leq \nu \langle x \rangle^{-N}, \quad \forall x \in \R^n, \ \forall \eps \in (0,1],
\end{equation*}
where $\nu>0$ and $N>1$ are the constants appearing in \ref{hp3}.
\item For each $\beta \in \mathbb{N}_0^n$ and for each $q \in \N_0$ there exists a constant $C'>0$ such that for any $k=1,\dots,n$
\begin{equation*}
    \sup_{x \in \R^n}|\partial_x^\beta n_{k,\eps}(x)|\leq C'\eps^{q}, \quad  \forall \eps \in (0,1].
\end{equation*}
Moreover, there exists a universal constant $c_0'>0$ such that, for each $\varepsilon \in (0,1]$ (with $N>1$ as in \ref{hp3}),
\begin{equation*}
|\mathrm{Im}(n_{k,\varepsilon})(x)| \leq c_0' \langle x \rangle^{-N}, \quad \forall x \in \R^n, \ \forall k = 1, \dots, n.
\end{equation*}

\item 
For each $\beta \in \N_0^n$ and each $q \in \mathbb{N}_0$, there exists a constant $C''>0$ such that
\begin{equation*}
\sup_{x \in \R^n}|\partial_x^\beta n_{V,\varepsilon}(x)| \leq C\eps^q, \quad \forall \eps \in (0,1].
\end{equation*}
\end{itemize}
Furthermore, we consider an $H^\infty(\R^n)$-negligible perturbation of the Cauchy data in the sense of Definition \ref{def.negHs}. Specifically, we consider the Cauchy problem 
\begin{equation}\label{eq.cauchyPprimo}
\begin{cases}
P_\eps' u_\eps=f_\eps+n_{f,\eps} \quad  \text{in} \ (0,T] \times \R^n, \\
u_\eps(0,\cdot)=u_{0,\eps}+n_{u_0,\eps} \quad \text{in}\  \R^n,
\end{cases}
\end{equation}
where $P_\eps'$ is defined as in \eqref{eq.Pepsprimo}, $(f_\eps)$ and $(u_{0,\eps})_\eps $ are $H^\infty$-moderate nets and $(n_{f,\eps})_\eps$, $(n_{u_0,\eps})_\eps$ are $H^\infty$-negligible nets.

To ensure the existence of a solution of the problem \eqref{eq.cauchyPprimo}, we need the following proposition.
\begin{Prop}\label{prop.eps0}
    Let $(P_\eps')_\eps$ be the net of operators defined as in \eqref{eq.Pepsprimo}. Then, \ref{hp1}, \ref{hp2}, \ref{hp3}, \ref{hp4}, \ref{hp5} hold for $(P_\eps')_\eps$, provided $\eps \in (0,\eps_0]$, with $\eps_0>0$ sufficiently small.
\end{Prop}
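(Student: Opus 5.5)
We check the five hypotheses one at a time for the perturbed net, writing $a'_{ij,\eps}:=a_{ij,\eps}+n_{ij,\eps}$, $b'_{k,\eps}:=b_{k,\eps}+n_{k,\eps}$ and $V'_\eps:=V_\eps+n_{V,\eps}$, with $\mathcal{A}'_\eps=\mathcal{A}_\eps+\mathcal{N}_\eps$. In each case we combine the hypothesis already known for the unperturbed net with the negligible bounds on the perturbation. The only place where a restriction $\eps\le\eps_0$ is needed is the lower bound in \ref{hp2}.

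\textbf{\ref{hp1} and \ref{hp2}.} Hypothesis \ref{hp1} is immediate, since $\mathcal{A}_\eps$ and $\mathcal{N}_\eps$ are both real and symmetric. For \ref{hp2}, we argue as in the proof of Theorem \ref{theo_with_sc}. The negligibility of $n_{ij,\eps}$ with $\beta=0$ and $q=1$ gives
\begin{equation*}
|\mathcal{N}_\eps(x)\cdot\xi|\le C(n)\max_{i,j}\|n_{ij,\eps}\|_{L^\infty}|\xi|\le C\eps|\xi|.
\end{equation*}
Hence
\begin{equation*}
\mu^{-1}|\xi|-C\eps|\xi|\le|\mathcal{A}'_\eps(x)\cdot\xi|\le(\mu+C\eps)|\xi|.
\end{equation*}
We choose $\eps_0$ so that $C\eps_0\le(2\mu)^{-1}$. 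Then \ref{hp2} holds on $(0,\eps_0]$ with the universal constant $\mu'=\max\{2\mu,\mu+(2\mu)^{-1}\}$. This is the main point of the proposition, and it is the reason for restricting to small $\eps$.

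\textbf{\ref{hp3}.} By the triangle inequality, $|\partial_{x_k}a'_{ij,\eps}(x)|\le 2\nu\langle x\rangle^{-N}$, using \ref{hp3} for $a_{ij,\eps}$ together with the assumed decay of $\partial_{x_k}n_{ij,\eps}$. The constant $2\nu$ is still as small as we like. So we read \ref{hp3} for $P'_\eps$ with $\nu$ replaced by $2\nu$; the smallness required in Proposition \ref{prop.qepsilon} is met once the original $\nu$ is taken half as large.

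\textbf{\ref{hp4} and \ref{hp5}.} The decay of the imaginary part follows in the same way: $|\mathrm{Im}(b'_{k,\eps})(x)|\le(c_0+c_0')\langle x\rangle^{-N}$. For the derivative bounds we use $\omega(\eps)<1$. This gives $|\partial^\beta_x n_{k,\eps}|\le C'\eps^0\le C'\omega(\eps)^{-|\beta|-N_1}$, and therefore \eqref{eq.bkeps} holds for $b'_{k,\eps}$ with constant $C_\beta+C'$. Exactly the same argument with $n_{V,\eps}$ gives \eqref{eq.Veps} for $V'_\eps$.

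One further remark is needed because $P'_\eps$ is defined directly rather than by mollification. Every later use of the hypotheses, such as \eqref{eq.conda2} in Proposition \ref{prop.qepsilon}, also relies on the symbol bounds $|\partial^\beta_x a'_{ij,\eps}|\le C\omega(\eps)^{-|\beta|+1}$. These remain valid because the derivatives of $n_{ij,\eps}$ are uniformly bounded, and $\omega(\eps)^{-|\beta|+1}\ge1$ for $|\beta|\ge1$.
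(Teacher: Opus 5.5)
Your proof is correct and follows the paper's approach. As in the paper, the only substantive point is the lower bound in (H2), obtained from $|\mathcal{A}'_\eps(x)\cdot\xi|\ge|\mathcal{A}_\eps(x)\cdot\xi|-|\mathcal{N}_\eps(x)\cdot\xi|$, the sup-norm negligibility of $n_{ij,\eps}$, and the restriction $\eps\le\eps_0$; the paper declares (H1), (H3), (H4), (H5) trivially satisfied, so your explicit constants ($2\nu$, $c_0+c_0'$, $C_\beta+C'$ via $\omega(\eps)<1$) fill in details it omits, and you correctly write $\mu^{-1}|\xi|$ where the paper's display has the typo $\mu|\xi|$.
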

\begin{proof}
    In this framework condition \ref{hp1}, \ref{hp3}, \ref{hp4} and \ref{hp5} are trivially satisfied. Therefore, we only need to check \ref{hp2}. 
    To do that we note that (recall $\mathcal{A}_\eps(x)=(a_{ij,\eps}(x))_{i,j}$ and $\mathcal{N}_\eps(x)=(n_{ij,\eps})_{i,j})$) 
    \begin{equation*}
        |(\mathcal{A}_\eps(x)+\mathcal{N}_\eps(x))\cdot \xi | \geq |\mathcal{A}_\eps(x) \cdot \xi|-|\mathcal{N}_\eps(x) \cdot \xi| \geq \mu |\xi|-c'\eps^q|\xi| \geq c |\xi|, \quad \forall (x,\xi) \in \R^n \times \R^n,
    \end{equation*}
    for some $c',c>0$, independent of $\eps$, provided $\eps \in (0,\eps_0]$. 
\end{proof}
Thus, since $(P_\eps')_\eps$ satisfies the aforementioned conditions there exists $(u_\eps')_\eps$ $H^\infty$-weak solution of \eqref{eq.cauchyPprimo}. Then, for each $\eps \in (0,\eps_0]$, with $\eps_0$ given as in Proposition \ref{prop.eps0}, we have
\begin{equation*}
\begin{cases}
    &P_\eps(u_\eps-u_\eps')=-n_{f,\eps}-(P_\eps-P_\eps')u_\eps'\\
    &u_\eps(0,\cdot)-u_\eps'(0,\cdot)=-n_{u_0,\eps}.
\end{cases}
\end{equation*}
Therefore, by \eqref{eq.i} we have that, for each $s \in \R$, there exist $C>0$ and $N \in \mathbb{N}$ so that
\begin{equation*}
     \sup_{0\leq t\leq T}\|u_\varepsilon(t,\cdot)-u_\varepsilon'(t,\cdot)\|_s\leq C\eps^{-N}\left(\|n_{u_0,\eps}\|_s+\int_0^T \|n_{f,\eps}(t,\cdot)+(P_\eps-P_\eps')u_\eps'(t,\cdot)\|_sdt\right),
\end{equation*}
for all $\eps \in (0,\eps_0]$. Hence, since $(n_{u_0,\eps})_\eps$ and $(n_{f,\eps})_\eps$ are $H^\infty$-negligible, and the net of coefficients of $(P_\eps-P_\eps')_\eps$ satisfies negligible estimates (cf. \cite{AACG}) we may conclude that for all $q>0$ there exists $c>0$ such that 
\begin{equation*}
    \sup_{0\leq t\leq T}\|u_\varepsilon(t,\cdot)-u_\varepsilon'(t,\cdot)\|_s\leq c\eps^{q}, \quad \forall \eps \in (0,\eps_0].
\end{equation*}
This proves the uniqueness result we wanted to obtain.

\section{Consistency with the classical theory}
In this section, we establish consistency with the classical theory. This means that when the coefficients are smooth, we recover the classical solution as $\eps \rightarrow 0$. We will consider first a problem with diagonal principal part and then we will deal with the general set-up.

We begin by recalling the classical result proven in \cite{FT} and \cite{KPRV} for evolution equation of the form 
\begin{equation}
\begin{cases}\label{mainprobclass}
    Pu=f \quad \text{in} \ ]0,T] \times \R^n, \\
    u(0,\cdot)=u_0 \quad \text{in} \ \R^n,
\end{cases}
\end{equation}
where $u_0 \in H^s(\R^n)$, for $s \in \R$, f has suitable regularity (see Theorem \ref{theo.class} below) and
\begin{equation}\label{eq.Pclass}
    P=D_t-\sum_{i,j=1}^nD_{x_i}(a_{ij}(x)D_{x_j})- \sum_{k=1}^nb_k(x)D_{x_k}-V(x),
\end{equation}
with $a_{ij},b_k,V \in C_b^\infty(\R^n)$ that satisfy the following hypotheses:
\begin{itemize}
    \item The matrix of the coefficients $\mathcal{A}(x)$ is real and symmetric for each $x \in \R^n$, and satisfies the non degeneracy condition: there exists $\mu>0$ such that 
    \begin{equation}\label{eq.condclass1}
        \mu^{-1}|\xi| \leq |\mathcal{A}(x) \cdot \xi| \leq \mu |\xi|, \quad \forall (x,\xi) \in \R^n \times \R^n.
\end{equation}
\item There exist $\nu>0$ sufficiently small and $N \in \mathbb{N}$, $N>1$, such that for all $\alpha \in \mathbb{N}_0^n$, with $|\alpha|\leq 1$, one has 
\begin{equation}\label{eq.condclass2}
|\partial_{x}^{\alpha} a_{ij}(x)| \leq \nu \langle x \rangle^{-N}, \quad \forall x \in \R^n, \ \ \forall i,j=1,\dots,n. 
\end{equation}
\item 
There exists a universal constant $c_0>0$ such that,
\begin{equation}\label{eq.condclass3}
|\mathrm{Im}(b_k)(x)| \leq c_0 \langle x \rangle^{-N}, \quad \forall x \in \R^n, \ \forall k = 1, \dots, n,
\end{equation}
with $N\in \mathbb{N}$ as above, and for each $\beta \in \mathbb{N}_0^n$ 
\begin{equation}\label{eq.condclass4}
    |\partial_x^\beta b_k(x)|\leq C_\beta, \quad \forall x \in \R^n, \ \forall k = 1, \dots, n,
\end{equation}
for some constant $C_\beta>0$.
\item For each $\gamma \in \mathbb{N}_0^n$ 
\begin{equation}\label{eq.condclass5}
|\partial_x^\gamma V(x)| \leq C_\gamma, \quad \forall x \in \R^n,
\end{equation}
for some constant $C_\gamma>0$
\end{itemize}

\begin{Th}\label{theo.class}
    Let $u_0 \in H^s(\R^n)$, and let $N>1$ as above.  Then, under the aforementioned hypotheses on $P$, we have the following results.   
    \begin{itemize}
    \item [(i)] If $f\in L^1([0,T];H^s(\R^n))$, the IVP \eqref{mainprobclass} has a unique solution $u \in C([0,T];H^s(\R^n))$ satisfying
    
    \begin{equation}\label{eq.iclass}
    \sup_{0\leq t\leq T}\|u(t,\cdot)\|_s\leq C_2e^{TC_1}\left(\|u_0\|_s+\int_0^T \|f(t,\cdot)\|_sdt\right).
\end{equation}
\item [(ii)]  If $f\in L^2([0,T];H^s(\R^n))$, the IVP \eqref{mainprobclass} has a unique solution $u\in C([0,T];H^{s}(\R^n))$ satisfying
\begin{equation}\label{eq.iiclass}
\begin{split}
    &\sup_{0\leq t \leq T}\|u(t,\cdot)\|^2_s+\int_0^T \|\langle x \rangle^{-{N/2}}\Lambda^{s+1/2}u(t,\cdot)\|_0^2\,dt\\
    &\hspace{4cm} \leq C_2e^{TC_1}\left(\|u_0\|_s^2+\int_0^T \|f(t,\cdot)\|_s^2 dt\right).
\end{split}
\end{equation}
\item [(iii)]  If $\Lambda^{s-\frac{m-1}{2}}f\in L^2([0,T]\times \R^n;\langle x \rangle^N dx dt)$, the IVP \eqref{mainprobclass} has a unique solution $u\in C([0,T];H^{s}(\R^n))$ satisfying
\begin{equation}\label{eq.iiiclass}
\begin{split}
    &\sup_{0\leq t \leq T}\|u(t,\cdot)\|^2_s+ \int_0^T \|\langle x \rangle^{-{N/2}}\Lambda^{s+1/2}u(t,\cdot)\|_0^2\,dt\\
    & \hspace{4cm} \leq C_2e^{TC_1}\left(\|u_0\|_s^2+\int_0^T \| \langle x \rangle^{N/2}\Lambda^{s-1/2}f(t,\cdot)\|^2_0 \,dt\right),
    \end{split}
\end{equation}
\end{itemize}  
for some constants $C_1,C_2>0$ depending on $s$ and on the coefficients of $P$. 
\end{Th}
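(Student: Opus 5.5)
This is the classical result of \cite{KPRV,FT}. The plan is to run the energy method of Theorem \ref{theo.apriori} with $\eps$ frozen, i.e.\ with smooth coefficients and no regularisation, and to observe that every $\omega(\eps)^{-k}$ factor becomes a fixed constant.

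\textbf{Step 1: a Doi symbol.} First I will construct the escape function. Set $a_2(x,\xi)=\langle \mathcal{A}(x)\xi,\xi\rangle$ and
\[
q(x,\xi)=C_1\mu^2\langle\xi\rangle^{-1}\sum_j x_j\partial_{\xi_j}a_2(x,\xi),
\]
exactly as in Proposition \ref{prop.qepsilon}. Since $a_{ij}\in C^\infty_b$, the bounds \eqref{prop.q} hold with $\omega\equiv 1$. The computation there, using \eqref{eq.condclass1} and \eqref{eq.condclass2} with $\nu$ small, gives
\[
H_{a_2}q\geq C_1|\xi|-C_2 .
\]
Next I apply the construction in the proof of Lemma \ref{lemma.Doiepsilon}: Doi's function $f$ together with the cutoffs $\psi_{0},\psi_{\pm}$. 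This produces $d\in S^0$ with
\[
H_{a_2}d\geq C\langle x\rangle^{-N}|\xi|-C'
\]
and $C$ arbitrarily large after rescaling (Remark \ref{rmk.C1}).

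\textbf{Step 2: the energy estimate.} Put $E=\mathrm{Op}(e^{d})$ and $N^s(u)^2=\|E\Lambda^su\|_0^2+\|u\|_{s-1}^2$. By Theorems \ref{theo.bound} and \ref{theo.comp}, this norm is equivalent to $\|\cdot\|_s$ with constants independent of everything. I then repeat the computation of $\partial_tN^s(u)^2$ term by term as in \eqref{eq.smeno1}--\eqref{eq.fin1piu2}:
\begin{itemize}
\item self-adjointness of $A$ handles the principal part;
\item the commutator $[E,iA]$ yields $-\mathrm{Op}(H_{a}d)E$ plus order zero terms;
\item the term $\{\langle\xi\rangle^s,a\}\langle\xi\rangle^{-s}$ is $O(\nu\langle x\rangle^{-N}|\xi|)$ by \eqref{eq.condclass2};
\item $-\mathrm{Im}\,b$ is bounded by $c_0\langle x\rangle^{-N}|\xi|$ by \eqref{eq.condclass3};
\item $V$ is order zero by \eqref{eq.condclass5}.
\end{itemize}
Choosing the Doi constant larger than $C_3\nu+c_0$ and applying Theorem \ref{theo.ShGa} gives
\[
\partial_tN^s(u)^2\le -c\|\langle x\rangle^{-N/2}\Lambda^{s+1/2}u\|_0^2+C_1N^s(u)^2+(\text{forcing term}).
\]
The $\langle x\rangle^{-N/2}\Lambda^{1/2}$ factorisation used for this step is the one in \cite{KPRV}.

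\textbf{Step 3: the three cases.} The forcing term is handled as in cases (i)--(iii) of Theorem \ref{theo.apriori}. In case (iii) the extra $\Lambda^{1/2}$ is moved onto $u$ via the weighted Cauchy--Schwarz inequality, and the resulting term is absorbed by the negative smoothing term. Gronwall's inequality then gives \eqref{eq.iclass}, \eqref{eq.iiclass} and \eqref{eq.iiiclass}.

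\textbf{Step 4: existence and uniqueness.} Uniqueness follows from the estimate applied to the difference of two solutions. For existence I will use a standard argument: either a parabolic regularisation $D_t-A-i\delta\Delta$ with estimates uniform in $\delta$, or a duality argument using the same a priori estimate for the adjoint problem, which has the same structure since $P^*$ satisfies the same hypotheses. The a priori bound with $s$ replaced by $-s$ plus Hahn--Banach then yields a solution, and continuity in time follows by density.

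The main obstacle is the sharp Gårding step. There the sign of $\mathrm{Re}(-H_ad+\{\langle\xi\rangle^s,a\}\langle\xi\rangle^{-s}-\mathrm{Im}\,b)$ must be controlled, which depends on both the smallness of $\nu$ and the freedom to enlarge the Doi constant. Existence via regularisation is routine by comparison.
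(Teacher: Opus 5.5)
Your proposal is correct and follows the same route the paper indicates in Remark \ref{rmk.class}: Doi's lemma is obtained from the smallness condition \eqref{eq.condclass2} instead of non-trapping (as in \cite{FT}), then comes the energy estimate of Lemma 2.3.1 in \cite{KPRV} (i.e.\ the proof of Theorem \ref{theo.apriori} with $\omega\equiv 1$), followed by a standard functional-analytic existence argument. One small slip: with $\Delta=\sum_j\partial_{x_j}^2$, your operator $D_t-A-i\delta\Delta$ is backward parabolic (it gives $\partial_t u=iAu-\delta\Delta u+\dots$), so the viscous regularisation should be $D_t-A+i\delta\Delta$; your duality alternative is unaffected.
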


\begin{Rem}\label{rmk.class}
The proof of this theorem is the same as in \cite{KPRV} and \cite{FT}. The result follows from a priori estimates, established, for instance, in Lemma 2.3.1 of \cite{KPRV}, together with a standard functional analysis argument. The key tool for obtaining such estimates is the so-called Doi's lemma (see, for instance, Lemma 2.2.2 in \cite{KPRV}), which in \cite{KPRV} is derived from a non-trapping condition on the bicharacteristic curves of the principal symbol of $A$. In the present setting, the non-trapping condition is replaced by the smallness condition \eqref{eq.condclass2}, under which Doi's lemma still holds (cf. \cite{FT}). Hence, the proof of the a priori estimates, which (once again together with a standard functional analysis argument) leads to the proof of Theorem \ref{theo.class}, is the same as that in Lemma 2.3.1 of \cite{KPRV}.
\end{Rem}

The next goal is to prove that whenever $P$ is an operator with smooth coefficients satisfying suitable conditions (which, in turn, imply conditions \eqref{eq.condclass1}--\eqref{eq.condclass5} and therefore guarantee well-posedness and the smoothing effect in the classical sense), the corresponding net of regularised operators$(P_\varepsilon)_\varepsilon$ satisfies \ref{hp1}--\ref{hp5}. By repeating the proof of Theorem \ref{theo_with_sc} we get the following Proposition.
\begin{Prop}\label{prop.Psmooth}
Let $P$ be an operator of the form 
\[
  P=D_t-\sum_{i,j=1}^nD_{x_i}(a_{ij}(x)D_{x_j})- \sum_{k=1}^nb_k(x)D_{x_k}-V(x),
\]
and assume that the coefficients fulfill the following hypotheses.
\begin{enumerate}[label=(\roman*)]
    \item The matrix $\mathcal{A}(x)=(a_{ij}(x))_{i,j=1}^n$ is real and symmetric, for all $ x \in \R^n$ and its entries can be decomposed as
    \[
    a_{ij}(x)=c_{ij}+\tilde{a}_{ij}(x), \quad  i,j=1,\dots,n,
    \]
    where $c_{ij} \in \R$ and $\tilde{a}_{ij} \in C_b^\infty(\R^n)$, for all $i,j=1,\dots,n$. Moreover, we assume that the matrix  $\mathcal{C}=(c_{ij})_{i,j=1}^n$ is non-degenerate, namely there exists $\mu'>0$ such that  
    \[
    {\mu'}^{-1}|\xi|\leq |\mathcal{C} \cdot \xi |\leq \mu' |\xi|, \quad \forall \xi \in \R^n,
    \]
    and for all $i,j=1,\dots,n$ the perturbations satisfy
    \begin{equation*}
    |\partial_x^{\alpha}\tilde{a}_{ij}(x)|\leq \nu' \langle x \rangle^{-N},\quad  \forall |\alpha|\leq 1, \ \forall x \in \R^n,
    \end{equation*}
    where $N \in \mathbb{N}$, $N>1$ and $\nu'>0$ is sufficiently small.
    \item For all $k=1,\dots,n$, we assume $b_k \in C^\infty(\R^n)$
    and (with $N>1$ as above)
    \[
    |\mathrm{Im}(b_k)(x)|\leq c_0'\langle x \rangle^{-N},
    \]
     for some universal constant $c_0'>0$.
    \item $V \in C^\infty_b(\R^n)$. 
\end{enumerate}
Then, the corresponding regularised operators $(P_\eps)_\eps$ defined as in Section \ref{sec.stateandreg}, fulfills \ref{hp1}-\ref{hp5}.
\end{Prop}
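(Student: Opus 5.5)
Hypotheses \ref{hp1}--\ref{hp5} are checked one at a time for the mollified coefficients $a_{ij,\eps}=a_{ij}\ast\varphi_{\omega(\eps)}$, $b_{k,\eps}$, $V_\eps$, following the proof of Theorem \ref{theo_with_sc} and replacing each distributional estimate by the stronger $C_b^\infty$ bounds now available. \ref{hp1} is immediate: convolution with the real, nonnegative mollifier $\varphi_{\omega(\eps)}$ preserves reality and symmetry of $\mathcal{A}(x)$, since $c_{ij}\ast\varphi_{\omega(\eps)}=c_{ij}$ and $\tilde a_{ij,\eps}=\tilde a_{ij}\ast\varphi_{\omega(\eps)}$ are real with $\tilde a_{ij,\eps}=\tilde a_{ji,\eps}$.

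For \ref{hp2} I will repeat \eqref{eq.Aepssc}. Young's inequality gives $\|\tilde a_{ij,\eps}\|_{L^\infty}\le\|\tilde a_{ij}\|_{L^\infty}\le\nu'$, using the case $\alpha=0$ of the assumed bound. Hence $|\tilde{\mathcal{A}}_\eps(x)\cdot\xi|\le C(n)\nu'|\xi|$, and with $\nu'$ small,
\[
(\mu'^{-1}-C\nu')|\xi|\le|\mathcal{A}_\eps(x)\cdot\xi|\le(\mu'+C\nu')|\xi|,
\]
uniformly in $\eps$. For \ref{hp3}, note that $\partial_{x_k}a_{ij,\eps}=(\partial_{x_k}\tilde a_{ij})\ast\varphi_{\omega(\eps)}$. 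I then argue exactly as in \eqref{eq.smallcoeffex}, using Peetre's inequality $\langle x-y\rangle^{-N}\le C_N\langle x\rangle^{-N}\langle y\rangle^N$:
\[
|\partial_{x_k}a_{ij,\eps}(x)|\le C_N\nu'\langle x\rangle^{-N}\int\langle y\rangle^N\varphi_{\omega(\eps)}(y)\,dy.
\]
Since $\omega(\eps)<1$, the change of variables $y=\omega(\eps)z$ gives $\int\langle y\rangle^N\varphi_{\omega(\eps)}(y)dy=\int\langle\omega(\eps)z\rangle^N\varphi(z)dz\le\int\langle z\rangle^N\varphi(z)dz$, which is finite because $\varphi\in\mathscr S$. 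This yields $\nu:=C_{N,\varphi}\nu'$, small when $\nu'$ is.

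For \ref{hp4}, the imaginary part is handled by the same Peetre computation applied to $\mathrm{Im}(b_k)$, which gives $|\mathrm{Im}(b_{k,\eps})(x)|\le c_0\langle x\rangle^{-N}$. This is the step I expect to be the main obstacle. The estimate \eqref{eq.bkeps} requires $b_k$ to be bounded, and the hypothesis as stated only gives $b_k\in C^\infty$ with $\mathrm{Im}(b_k)$ decaying. I will read (ii) as including $b_k\in C_b^\infty(\R^n)$, consistently with \eqref{eq.condclass4}; at minimum I need $\mathrm{Re}(b_k)\in L^\infty$. Then $\partial^\beta b_{k,\eps}=(\partial^\beta b_k)\ast\varphi_{\omega(\eps)}$ gives $|\partial^\beta b_{k,\eps}|\le\|\partial^\beta b_k\|_{L^\infty}$. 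This bound is uniform in $\eps$ and hence trivially of the form $C_\beta\omega(\eps)^{-|\beta|-N_1}$ for any $N_1\ge0$, since $\omega<1$. If only $b_k\in L^\infty$ were assumed, Proposition \ref{Prop.v} still gives $C\omega(\eps)^{-|\beta|}$, which suffices.

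Finally, \ref{hp5} follows in the same way from $V\in C_b^\infty$: $|\partial^\gamma V_\eps|\le\|\partial^\gamma V\|_{L^\infty}\le C_\gamma\omega(\eps)^{-|\gamma|-N_2}$. In particular, every $\eps$-dependence in \ref{hp4} and \ref{hp5} becomes trivial in the smooth case, which is what makes the subsequent consistency argument work.
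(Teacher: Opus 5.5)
Your proposal is correct and takes the same approach as the paper, whose proof of this proposition is just to rerun the proof of Theorem \ref{theo_with_sc} with the $C_b^\infty$ bounds in place of the distributional ones. Your Peetre computation for \ref{hp3} and for the imaginary part in \ref{hp4} is more careful than the paper's, since you check explicitly that $\int\langle y\rangle^N\varphi_{\omega(\eps)}(y)\,dy\le\int\langle z\rangle^N\varphi(z)\,dz$ uniformly in $\eps$.

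You are also right to read (ii) as $b_k\in C_b^\infty(\R^n)$. With only $b_k\in C^\infty$ the statement fails: for example $b_1(x)=x_1$ gives an unbounded $b_{1,\eps}$, which violates \eqref{eq.bkeps}. Boundedness is in any case what \eqref{eq.condclass4} and Remark \ref{rmk.cons} require.
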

\begin{Rem}\label{rmk.cons}
    Note that if $P$ is an operator that satisfies the hypotheses of Proposition \ref{prop.Psmooth}, then it satisfies \eqref{eq.condclass1}-\eqref{eq.condclass5}.
\end{Rem}

\begin{Rem}\label{rmk.diag}
It is remarkable that the consistency result can also be applied to a simpler class of (regular) operators tailored to Example \ref{ex.ultradiag}.  
More precisely, let $P$ be the operator defined in \eqref{eq.Pclass} with diagonal matrix $\mathcal{A}$, i.e.
\[
  P=D_t-\sum_{i=1}^nD_{x_i}(a_{ii}(x)D_{x_i})- \sum_{k=1}^nb_k(x)D_{x_k}-V(x).
\]
We assume that  
\[
a_{ii}(x)=c_{i}+\tilde{a}_{ii}(x),
\]
where $c_{ii}\in \mathbb{R} \setminus \lbrace 0\rbrace$ is a constant and $\tilde{a}_{ii}$ is smooth and bounded for $i=1,\cdots, n$ with 
\[
0\le \tilde{a}_{ii}(x)\le \frac{|c_i|}{2},
\]
for all $x$. Furthermore, we require that the lower order parts of $P$ satisfy the same hypotheses $(ii)$ and $(iii)$ of Proposition \ref{prop.Psmooth}.

Let $(P_\varepsilon)_\varepsilon$ be the corresponding regularised operators as in Section \ref{sec.stateandreg}. If $P$ fulfills the hypotheses \eqref{eq.condclass1}-\eqref{eq.condclass5} then $(P_\varepsilon)_\varepsilon$ fulfills the hypotheses $(H1)-(H5)$.

The proof of this fact follows by repeating the proof of Theorem \ref{theo_with_sc} in the case where $P$ is an operator with regular coefficients.
\end{Rem}

We are now ready to prove the following consistency result.

Let $P$ be an operator satisfying the hypotheses of Proposition \ref{prop.Psmooth} and let $f \in C([0,T];H^\infty(\R^n))$ and $u_0 \in H^\infty(\R^n)$. Let also $(f_\eps)_\eps$ and $(u_{0,\eps})_\eps$, with  $f_\eps(t,x)=f_\eps \ast \varphi_\eps$ and $u_{0,\eps}=u_0 \ast\varphi_\eps$, for $\eps \in (0,1]$ and $\varphi \in \mathscr{S}(\R^n)$, $\int \varphi \, dx=1$, with all vanishing moments, i.e. $\int x^\alpha \varphi(x)dx=0$, $\forall \alpha \neq 0$.

We denote by $u$ the solution of 
\begin{equation}
    \begin{cases}
        P u=f \quad \text{in} \ \ (0,T] \times \R^n \\
        u(0,\cdot)=u_0\quad \text{in}\ \ \R^n,
    \end{cases}
\end{equation}
that we recall it exists and is unique due to Theorem \ref{theo.class} and Remark \ref{rmk.class}, and by $u_\eps$ the solution of 
\begin{equation}
    \begin{cases}
        P_\eps u_\eps=f_\eps \quad \text{in} \ \ (0,T] \times \R^n \\
        u_\eps(0,\cdot)=u_{0,\eps}\quad \text{in}\ \ \R^n.
    \end{cases}
\end{equation}

\begin{Prop}
\label{prop_consistency}
Under the hypotheses above any very weak solution $u_\eps$ converges to the classical solution $u$ in  $C([0,T];H^\infty(\R^n))$.
\end{Prop}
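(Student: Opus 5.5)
We compare $u_\eps$ with $u$ through the difference $w_\eps := u_\eps - u$. It solves
\begin{equation*}
\begin{cases}
P_\eps w_\eps = (f_\eps - f) + (P - P_\eps)u =: g_\eps & \text{in } (0,T]\times\R^n,\\
w_\eps(0,\cdot) = u_{0,\eps} - u_0 & \text{in } \R^n,
\end{cases}
\end{equation*}
where
\[
(P-P_\eps)u = \sum_{i,j} D_{x_i}\bigl((a_{ij,\eps}-a_{ij})D_{x_j}u\bigr) + \sum_k (b_{k,\eps}-b_k)D_{x_k}u + (V_\eps - V)u .
\]
By Proposition \ref{prop.Psmooth} the net $(P_\eps)_\eps$ satisfies \ref{hp1}--\ref{hp5}. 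Rather than Theorem \ref{theo.apriori}, whose constants blow up like $e^{C_1\omega(\eps)^{-k_1}T}$, we want an estimate uniform in $\eps$.

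The key observation is that, for smooth coefficients, these constants are in fact $\eps$-uniform. Since $a_{ij}\in C^\infty_b$, we have
\[
\|\partial^\beta a_{ij,\eps}\|_{L^\infty} = \|(\partial^\beta a_{ij})\ast\varphi_{\omega(\eps)}\|_{L^\infty} \le \|\partial^\beta a_{ij}\|_{L^\infty}\|\varphi\|_{L^1},
\]
and the same holds for $b_{k,\eps}$ and $V_\eps$. (The $b_k$ need bounded derivatives; these are implied by the classical hypotheses via Remark \ref{rmk.cons}.) All symbol seminorms of $a_\eps$, $b_\eps$ and $V_\eps$ are therefore bounded independently of $\eps$. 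The same then holds for $q_\eps$ in Proposition \ref{prop.qepsilon}, for $d_\eps$ in Lemma \ref{lemma.Doiepsilon}, and for $e^{\pm d_\eps}$. Rerunning the proof of Theorem \ref{theo.apriori} verbatim, every occurrence of $\omega(\eps)^{-k}$ is replaced by a constant. This gives, for each $s$,
\[
\sup_{t\in[0,T]}\|w_\eps(t)\|_s \le C_s\Bigl(\|u_{0,\eps}-u_0\|_s + \int_0^T\|g_\eps(t)\|_s\,dt\Bigr),
\]
with $C_s$ independent of $\eps$. I expect this uniformity check to be the main point. One must verify that the Doi construction uses only the $\eps$-uniform bounds \ref{hp2} and \ref{hp3} and uniform seminorms, and that the sharp Gårding constant in Theorem \ref{theo.ShGa} depends only on finitely many seminorms. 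This is essentially Remark \ref{rmk.class} applied uniformly to the family $(P_\eps)_\eps$.

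It then remains to show that the right-hand side tends to $0$. For the data, $\|u_{0,\eps}-u_0\|_s \to 0$ because $\widehat{\varphi_\eps}(\xi)=\hat\varphi(\eps\xi)\to 1$ boundedly and $u_0\in H^s$; dominated convergence in Fourier space applies. The same argument works for $f_\eps - f$, uniformly in $t$ since $f\in C([0,T];H^{s})$ has compact range. For the coefficient term, Taylor expansion together with the vanishing moments of $\varphi$ gives
\[
\|\partial^\beta(a_{ij,\eps}-a_{ij})\|_{L^\infty} \le C_{\beta,M}\,\omega(\eps)^M \quad \text{for every } M.
\]
In any case $O(\omega(\eps))$ suffices, and the same bounds hold for $b_{k,\eps}-b_k$ and $V_\eps-V$. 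The coefficient differences are thus $C^\infty_b$ functions tending to $0$ in every $C^m_b$ seminorm. Multiplication by such a function is bounded on $H^{s}$ with norm controlled by finitely many seminorms, so
\[
\|(P-P_\eps)u(t)\|_s \le C\,\omega(\eps)\,\|u(t)\|_{s+2}.
\]
This is uniformly bounded in $t$ because $u\in C([0,T];H^{s+2})$ by Theorem \ref{theo.class}, as the data are $H^\infty$.

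Hence $\sup_t\|u_\eps(t)-u(t)\|_s\to 0$ for every $s$, which is convergence in $C([0,T];H^\infty(\R^n))$. Finally, any other very weak solution differs from this $u_\eps$ by a negligible net, by the uniqueness result of Subsection \ref{sub.uniq}, so it has the same limit.
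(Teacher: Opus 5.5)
Your proof is correct, but its decomposition is the mirror image of the paper's. The paper applies the classical estimate of Theorem \ref{theo.class} for the fixed operator $P$ to $u-u_\eps$, with source $(f-f_\eps)+R_\eps u_\eps$, $R_\eps=P_\eps-P$. This puts the coefficient error on $u_\eps$, so the paper also needs $\sup_{t}\|u_\eps(t,\cdot)\|_{s+2}\le C_s$ uniformly in $\eps$, which it takes from an $\eps$-uniform version of \eqref{eq.i}. The paper therefore uses both the classical estimate and the uniform estimate for $(P_\eps)_\eps$. You instead apply the uniform estimate for $P_\eps$ directly to $w_\eps=u_\eps-u$, and put the coefficient error on the fixed solution $u$, whose $C([0,T];H^{s+2})$ regularity is given by Theorem \ref{theo.class}. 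This uses the uniform estimate only once and needs no uniform higher-order bound on $u_\eps$. It also yields the explicit bound $\sup_t\|u_\eps(t)-u(t)\|_s\lesssim\|u_{0,\eps}-u_0\|_s+\int_0^T\|f_\eps(t)-f(t)\|_s\,dt+\omega(\eps)\sup_t\|u(t)\|_{s+2}$. The crux is the same in both routes: for $C^\infty_b$ coefficients, every seminorm entering the proof of Theorem \ref{theo.apriori} is bounded independently of $\eps$. This covers the seminorms of $a_\eps$, $b_\eps$, $V_\eps$, $d_\eps$ and $e^{\pm d_\eps}$, and the sharp G\aa rding constant. You isolate and sketch this point more explicitly than the paper does. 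Your treatment of the data is also lighter. You only need $\|u_{0,\eps}-u_0\|_s\to0$ and $\sup_t\|f_\eps(t)-f(t)\|_s\to0$, which hold for any Schwartz $\varphi$ with $\int\varphi=1$. The paper invokes vanishing moments to get $H^\infty$-negligibility, which is more than the argument requires.

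Two small corrections. First, the coefficients are mollified with $\varphi_{\omega(\eps)}$ where $\varphi\ge0$ (Subsection \ref{subsec.reg}). A nonnegative $\varphi$ cannot have vanishing second moments, so $\|\partial^\beta(a_{ij,\eps}-a_{ij})\|_{L^\infty}=O(\omega(\eps)^M)$ for every $M$ is not available in general. What you do get is $O(\omega(\eps))$ from $\int|y|\varphi(y)\,dy<\infty$, and that is all you use. Second, your closing appeal to Subsection \ref{sub.uniq} does not do what you claim. That result only compares nets whose coefficients and data differ by negligible nets. Regularisations with a different mollifier or scale differ by terms of size $O(\eps)$ or $O(\omega(\eps))$, which are not negligible. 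The appeal is unnecessary anyway, since your direct argument already works for every admissible mollifier of the data and every scale $\omega$.
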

\begin{proof}
Note that $u-u_\eps$ solves, 
\begin{equation}
    \begin{cases}
        P(u-u_\eps)=(f-f_\eps)+R_\eps u_\eps \quad \text{in} \ \ (0,T] \times \R^n \\
        (u-u_\eps)(0,\cdot)=u_0-u_{0,\eps}\quad \text{in}\ \ \R^n,
    \end{cases}
\end{equation}
with 
\begin{equation}\label{eq.reps}
R_\eps=P_\eps-P=\sum_{i,j=1}^nD_{x_i}\bigl((a_{ij,\eps}(x)-a_{ij}(x))D_{x_j}\bigr)- \sum_{k=1}^n(b_{k,\eps}(x)-b_k(x))D_{x_k}-(V_\eps(x)-V(x)).
\end{equation}
Therefore, denoting  $R_\eps=\mathrm{Op}(r_\eps)$, by Theorem \ref{theo.bound}, Theorem \ref{theo.class} and Remark \ref{rmk.cons} we have 
\begin{equation}
\begin{split}
    \sup_{0\leq t\leq T}\|u(t,\cdot)-u_\eps(t,\cdot)\|_s & \leq C\Bigl(\|u_0-u_{0,\eps}\|_s+\int_0^T \|(f-f_\eps)(t,\cdot)\|_sdt+ \int_0^T \|R_\eps  u_\eps(t,\cdot)\|_sdt \Bigr)\\
    & \leq C'\Bigl(\|u_0-u_{0,\eps}\|_s+\int_0^T \|(f-f_\eps)(t,\cdot)\|_sdt+ |r_\eps|^{(2)}_k\int_0^T \|  u_\eps(t,\cdot)\|_{s+2}dt \Bigr),
\end{split}
\end{equation}
for some $C,C',k$ independent of $\eps$.
We now note that, by reasoning as in \cite{AACG} we have that, since the coefficients of the operator $P$ are $C^\infty$, we obtain estimate \eqref{eq.i} uniformly with respect to $\eps$, that lead to  
\[
\|u_\eps(t,\cdot)\|_{s+2}\leq C_s,
\]
for some $C_s>0$, uniformly in $\eps$.
Moreover, since $R_\eps=\mathrm{Op}(r_\eps)$ is given by \eqref{eq.reps}, we easily obtain that 
\[
|r_\eps|_k^{(2)} \rightarrow 0, \quad \text{as} \ \eps \rightarrow 0^+.
\]
Finally, $(f-f_\varepsilon)\varepsilon$ and $(u_0-u_{0,\varepsilon})_\varepsilon$ are $H^\infty$-negligible nets, since the Cauchy data $u_0$ and $f$ have been regularised by means of a mollifier with all vanishing moments (cf. \cite{AACG}).

\noindent Therefore, in conclusion
\[
u_\eps \rightarrow u \quad \text{as} \ \eps \rightarrow 0, \quad \text{in} \ \ C([0,T];H^\infty(\R^n)),
\]
and this proves the consistency of our approach with the classical theory. 
\end{proof}
\begin{Rem}
 We conclude this work by emphasising that our approach allows us to obtain a convergence result as $\varepsilon \to 0$ even when the coefficients of the operator $P$, generating problem \eqref{mainprobIntro}, are merely $C^k$ in the spatial variables (rather than $C^\infty$, as required in the classical theory), for a sufficiently large integer $k$. The Cauchy data $f$ and $u_0$ are assumed to belong to the Sobolev space $H^s(\mathbb{R}^n)$ for some $s \in \mathbb{R}$.

More precisely, consider the Cauchy problem
\begin{equation*}
\begin{cases}
Pu = f & \text{in } (0,T] \times \mathbb{R}^n, \\
u(0,\cdot) = u_0 & \text{in } \mathbb{R}^n,
\end{cases}
\end{equation*}
where $u_0 \in H^s(\mathbb{R}^n)$, $f \in C([0,T]; H^s(\mathbb{R}^n))$, and $P$ is defined as in \eqref{def.P}, with coefficients in $C^k(\mathbb{R}^n)$ for some $k$, satisfying the structural conditions stated above.

By repeating the regularisation argument used in the proof of Theorem~\ref{theo.apriori}, we may construct a net of solutions $(u_\varepsilon)_\varepsilon$ solving the corresponding regularised problems induced by the net $(P_\varepsilon)_\varepsilon$, with the same initial data $u_0$ and right-hand side $f$. Moreover, if the coefficients of $P$ are sufficiently regular (i.e. is $k$ is sufficiently large) all the seminorms of symbols that appear in the proof of Theorem \ref{theo.apriori} actually do not depend on $\eps$. Therefore, by repeating the proof, we obtain that
\[
\sup_{0 \le t \le T} \|u_\varepsilon(t,\cdot)\|_{H^s}
\le
C_2 e^{C_1 T}
\left(
\|u_0\|_{H^s}
+
\int_0^T \|f(t,\cdot)\|_{H^s}\, dt
\right),
\]
uniformly in $\eps$. Hence, the net $(u_\varepsilon(t,\cdot))_\varepsilon$ is uniformly bounded in $\varepsilon$ in $H^s(\R^n)$, for each $t \in [0,T]$ and then it admits a subsequence converging weakly in this space.
\end{Rem}


\end{document}